\numberwithin{equation}{section}
\def\N{\EuScript N}
\newcommand{\M}{\EuScript{M}}
\def\A{\EuScript A}
\def\B{\EuScript B}
\def\L{\EuScript L}
\def\Q{\EuScript Q}
\def\R{\EuScript R}
\def\EF{\mathrm{EF}}
\def\imp{\rightarrow}
\def\IMP{\Rightarrow}
\def\IFF{\Leftrightarrow}
\def\wins{\uparrow}
\newcommand{\1}{\mathbf{I}}
\newcommand{\2}{\mathbf{II}}
\renewcommand{\L}{\mathcal{L}}
\newcommand{\RR}{\mathbb{R}}
\newcommand{\pre}[2]{{}^{#1}#2}
\newcommand{\On}{\mathrm{On}}
\newcommand{\Nbhd}{\boldsymbol{N}}
\newcommand{\Tr}{\mathscr{T}}
\newcommand{\ElCl}{\mathcal{C}}
\newcommand{\ran}{\operatorname{ran}}
\newcommand{\Mod}{\operatorname{Mod}}
\newcommand{\pred}{\operatorname{pred}}
\newcommand{\dom}{\operatorname{dom}}
\newenvironment{enumerate-(a)}{\begin{enumerate}[label={\upshape (\alph*)}, leftmargin=2pc]}{\end{enumerate}}
\newenvironment{enumerate-(a)-r}{\begin{enumerate}[label={\upshape (\alph*)}, leftmargin=2pc,resume]}{\end{enumerate}}
\newenvironment{enumerate-(a)-5}{\begin{enumerate}[label={\upshape (\alph*)}, leftmargin=2pc,start=5]}{\end{enumerate}}
\newenvironment{enumerate-(A)}{\begin{enumerate}[label={\upshape (\Alph*)}, leftmargin=2pc]}{\end{enumerate}}
\newenvironment{enumerate-(A)-r}{\begin{enumerate}[label={\upshape (\Alph*)}, leftmargin=2pc,resume]}{\end{enumerate}}
\newenvironment{enumerate-(i)}{\begin{enumerate}[label={\upshape (\roman*)}, leftmargin=2pc]}{\end{enumerate}}
\newenvironment{enumerate-(i)-r}{\begin{enumerate}[label={\upshape (\roman*)}, leftmargin=2pc,resume]}{\end{enumerate}}
\newenvironment{enumerate-(I)}{\begin{enumerate}[label={\upshape (\Roman*)}, leftmargin=2pc]}{\end{enumerate}}
\newenvironment{enumerate-(I)-r}{\begin{enumerate}[label={\upshape (\Roman*)}, leftmargin=2pc,resume]}{\end{enumerate}}
\newenvironment{enumerate-(1)}{\begin{enumerate}[label={\upshape (\arabic*)}, leftmargin=2pc]}{\end{enumerate}}
\newenvironment{enumerate-(1)-r}{\begin{enumerate}[label={\upshape (\arabic*)}, leftmargin=2pc,resume]}{\end{enumerate}}
\newenvironment{itemizenew}{\begin{itemize}[leftmargin=2pc]}{\end{itemize}}
\newtheorem{theorem}{Theorem}[section]
\newtheorem{lemma}[theorem]{Lemma}
\newtheorem{cor}[theorem]{Corollary}
\newtheorem{prop}[theorem]{Proposition}
\newtheorem{question}[theorem]{Question}
\newtheorem{fact}[theorem]{Fact}
\newtheorem{claim}{Claim}[theorem]
\theoremstyle{definition}
\newtheorem{defin}[theorem]{Definition}
\newtheorem{example}[theorem]{Example}
\theoremstyle{remark}
\newtheorem{remark}[theorem]{Remark}
\begin{document}

\title{A descriptive Main Gap Theorem}
\date{\today}
\author[F.~Mangraviti]{Francesco Mangraviti}
\address{Institut f\"ur Philosophie I, Ruhr Universit\"at Bochum,
	Universit\"atsstr.\ 150,
	44801 Bochum --- Germany}
\email{Francesco.Mangraviti@ruhr-uni-bochum.de}
\author[L.~Motto Ros]{Luca Motto Ros}
\address{Dipartimento di matematica \guillemotleft{Giuseppe Peano}\guillemotright, Universit\`a di Torino, Via Carlo Alberto 10, 10123 Torino --- Italy}
\email{luca.mottoros@unito.it}
 \subjclass[2010]{03E15, 03C45}
 \keywords{Generalized descriptive set theory; stability theory; classification of theories; complexity of isomorphism}
\thanks{We thank the referee for the careful reading of our manuscript and M.\ Moreno for suggesting and discussing with us the content of Section~\ref{subsec:incompletetheories}.}

\begin{abstract} 
Answering one of the main questions of \cite[Chapter 7]{FHK14}, we show that there is a tight connection between the depth of a classifiable shallow theory \( T \) and the Borel rank of the isomorphism relation \( \cong^\kappa_T \) on its models of size \( \kappa \), for \( \kappa \) any cardinal satisfying \( \kappa^{< \kappa} = \kappa > 2^{\aleph_0} \). This is achieved by establishing a link between said rank and the \( \mathcal{L}_{\infty \kappa} \)-Scott height of the \( \kappa \)-sized models of \( T \), and yields to the following descriptive set-theoretical analogue of Shelah's Main Gap Theorem: Given a countable complete first-order theory \( T \), either \( \cong^\kappa_T \) is Borel with a \emph{countable} Borel rank (i.e.\ very simple, given that the length of the relevant Borel hierarchy is \( \kappa^+ > \aleph_1 \)), or it is not Borel at all. The dividing line between the two situations is the same as in Shelah's theorem, namely that of classifiable shallow theories. We also provide a Borel reducibility version of the above theorem, discuss some limitations to the possible (Borel) complexities of \( \cong^\kappa_T \), and provide a characterization of categoricity of \( T \) in terms of the descriptive set-theoretical complexity of \( \cong^\kappa_T \). 
\end{abstract}

\maketitle

\section{Introduction}

\emph{In the whole paper, (first-order) theories, usually denoted by \( T \), are assumed to be countable, complete and to have infinite models, \textbf{unless otherwise stated}.}

 Classification theory (also known as stability theory) was first conceived as a tool to solve in a systematic and general way the \emph{spectrum problem} for countable complete theories, that is, the problem of computing the number \( I(\kappa,T) \) of nonisomorphic models  of \( T \) of size \( \kappa \geq \aleph_1 \). The obvious bounds for \( I(\kappa,T) \) are
\[ 
1 \leq I(\kappa,T) \leq 2^\kappa.
 \] 

The main idea of classification theory, as shown in Shelah's  masterpiece \cite{Sh00}, is that there are several key dichotomies that can be used to identify how well-behaved a theory is: (super)stable versus un(super)stable, DOP (\emph{Dimensional Order Property}) versus NDOP (\emph{Not-DOP}), OTOP (\emph{Omitting Types Order Property}) versus NOTOP (\emph{Not-OTOP}), shallow versus deep, and so on. 
Shelah first proved that if a theory \( T \) is either unsuperstable, or superstable and either DOP or OTOP, then the spectrum function always assumes the maximal value, i.e.\ \( I(\kappa,T) = 2^\kappa \) for every \( \kappa \geq \aleph_1 \).
Thus theories \( T \) which are (stable) superstable, NDOP and NOTOP are the only ones for which there can be a nontrivial upper bound on the spectrum function, and for this reason such \( T \)'s are called \emph{classifiable}. 

The following quote from ~\cite{Bal88} concisely explains how the spectrum problem for classifiable theories was solved by Shelah:
\begin{quote}
The solution of the spectrum problem for classifiable theories  depends upon a key construction which assigns to each model of
size \( \kappa \) a skeleton of submodels. Each submodel has cardinality at most
\( 2^{\aleph_0} \), and the skeleton is partially ordered by the natural tree order on a
subset of \( \pre{<\omega}{\kappa} \). The isomorphism type of the model is determined by the
small submodels and this partial ordering. [...] If one of
these trees is not well-founded, the theory is said to be \textbf{deep} and has \( 2^\kappa \)
models for every \( \kappa \geq \aleph_1 \). If not, the theory is \textbf{shallow} and the
type of structure theory we have described exists. We are able to assign
to each such shallow theory a depth \(\alpha\) corresponding to the rank of a
system of invariants, as discussed above, and to compute the spectrum
function of \( T \) in terms of that depth.
\end{quote}
\begin{flushright}
(John T.\ Baldwin, \emph{Fundamentals of Stability Theory})
\end{flushright}
The above ``decomposition'' technique yields that an upper bound to the number of  isomorphism types for \( \kappa \)-sized models of a classifiable shallow theory \( T \) can essentially be obtained by computing how many labeled (with labels of size\linebreak \( \leq 2^{\aleph_0} \)) well-founded subtrees of \( \pre{< \omega}{\kappa} \) of rank \( \leq \alpha \) are there. Summing up all these informations, one finally gets Shelah's celebrated  Main Gap Theorem.
\begin{theorem}[{\cite[The Main Gap Theorem 6.1]{Sh00}}] \label{thm:She}
Let \( \kappa \geq \aleph_1 \) be the \( \gamma \)-th cardinal. 
\begin{enumerate-(1)} \label{thm:Shelah}
\item \label{Shelah1}
If \( T \) is classifiable shallow of depth \(\alpha\), then
\[
I(\kappa, T) \leq \beth_\alpha \left(|\gamma|^{2^{\aleph_0}} \right) .
\] 
(If \( \alpha \geq \omega \) then \( \beth_\alpha \left(|\gamma|^{2^{\aleph_0}} \right) \) can be replaced by \( \beth_\alpha \left(|\gamma| \right) \).)
\item
If \( T \) is not classifiable shallow, then
\[
I(\kappa, T) = 2^\kappa.
\]
\end{enumerate-(1)}
\end{theorem}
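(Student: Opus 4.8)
The plan is to prove the two halves by entirely different means: the upper bound of part~(1) by the \emph{structure} theory of classifiable shallow theories, and the lower bound of part~(2) by a family of \emph{non-structure} theorems. For part~(2) I would argue along the chain of dichotomies recalled in the introduction, showing that landing on the ``bad'' side of any single one of them already forces \(I(\kappa,T)=2^\kappa\). Concretely, one treats separately the cases \(T\) unstable, \(T\) stable-but-unsuperstable, \(T\) superstable with DOP, \(T\) superstable with OTOP, and \(T\) classifiable but deep; since ``\(T\) not classifiable shallow'' means \(T\) falls under at least one of these headings, it suffices to produce \(2^\kappa\) pairwise non-isomorphic \(\kappa\)-sized models in each case.

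The uniform mechanism for part~(2) is to encode a large combinatorial family of index objects into models of \(T\) and then recover the index from the isomorphism type. For the unstable case I would use the order property to build Ehrenfeucht--Mostowski models over the \(2^\kappa\) pairwise non-isomorphic (coloured) linear orders of size \(\kappa\), arranging that the order type remains definably visible inside the model. The unsuperstable, DOP, and OTOP cases follow the same template but with trees \(\pre{<\omega}{\kappa}\), respectively type-omitting patterns, as index objects: in each case the relevant property supplies enough freedom to realize \(2^\kappa\) distinct invariants. The deep case is the one nearest to the hypothesis of part~(1): here the skeleton trees of the Baldwin decomposition can be chosen ill-founded, and the \(2^\kappa\) isomorphism types of ill-founded trees of size \(\kappa\) persist as invariants of the resulting models.

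For part~(1) I would invoke the decomposition described in the quoted passage: each \(M\models T\) of size \(\kappa\) carries a skeleton, a tree \((M_s)_{s\in I}\) of submodels indexed by a subtree \(I\subseteq\pre{<\omega}{\kappa}\), each \(M_s\) of size \(\leq 2^{\aleph_0}\), with \(M\) prime and minimal over the independent system and each \(M_s\) determined over its predecessor by a regular type. NDOP and NOTOP are precisely what guarantee that no order-like information beyond the bare tree must be recorded, so the isomorphism type of \(M\) is determined by \(I\) together with a labelling of its nodes by the isomorphism type of the small submodel there plus dimension data drawn from the \(|\gamma|\) infinite cardinals below \(\kappa\). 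Shallowness of depth \(\alpha\) forces every such \(I\) to be well-founded of rank \(\leq\alpha\). It then remains to count the labelled well-founded subtrees of \(\pre{<\omega}{\kappa}\) of rank \(\leq\alpha\) over a base of \(|\gamma|^{2^{\aleph_0}}\) possible node-labels; an induction on \(\alpha\) mirroring the \(\beth\)-recursion (each rank increase replaces a family of invariants by its power set) yields \(\beth_\alpha(|\gamma|^{2^{\aleph_0}})\), and a sharper bookkeeping in the infinite regime produces the improved \(\beth_\alpha(|\gamma|)\) for \(\alpha\geq\omega\).

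The main obstacle, by a wide margin, is the structure half. Establishing that the labelled skeleton is a \emph{complete} isomorphism invariant --- that two models with isomorphic labelled skeletons are themselves isomorphic --- requires the full superstability machinery: existence and uniqueness of prime and primary models over independent trees, the theory of regular types and orthogonality, and the use of NDOP to show that the independent tree of submodels genuinely reconstructs \(M\). On the non-structure side the analogous difficulty is proving that the candidate models are really pairwise non-isomorphic, i.e.\ that the combinatorial invariant is recoverable from the abstract isomorphism type, typically via Ehrenfeucht--Fra\"iss\'e/back-and-forth arguments tailored to each dividing line. Since assembling all of this is exactly the content of \cite{Sh00}, in practice I would cite that work for both halves and independently verify only that the counting in part~(1) delivers the stated \(\beth_\alpha\) bounds.
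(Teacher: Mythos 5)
The paper does not prove this theorem at all: it is imported verbatim from Shelah \cite[The Main Gap Theorem 6.1]{Sh00}, and the only exposition offered is the Baldwin quotation in the introduction, which describes exactly the decomposition into labelled well-founded skeleton trees that you outline. Your sketch of the structure/non-structure split, the case analysis over the dividing lines for part~(2), and the counting of labelled trees for part~(1) is a faithful summary of the standard argument, and your decision to defer the superstability machinery to \cite{Sh00} is precisely what the paper itself does, so the proposal is correct and takes essentially the same approach.
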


Since by~\cite[Th\'eor\`eme 4.1]{La85} classifiable shallow theories have countable depth, when we are in case~\ref{Shelah1} of the above theorem we actually get a uniform upper bound on \( I(\kappa,T) \) which is independent of the depth of \( T \), namely
\[
I(\kappa, T) <  \beth_{\omega_1} \left(|\gamma| \right).
\] 
\begin{remark} \label{rmk:She}
The upper bound in Theorem~\ref{thm:Shelah}\ref{Shelah1} may become trivial (e.g.\ when \( \kappa \) is a fixed point of the \( \aleph \)-function),  but it is not when e.g.\ \( \kappa = \aleph_\gamma \) is such that 
\[ 
\beth_{\omega_1}\left( | \gamma| \right) \leq \kappa .
\]
Indeed, in this case Shelah's upper bound is even \( < \kappa \).
In general it is easy to find cardinals satisfying the above condition. For example, under \( \mathsf{GCH} \) there are unboundedly many such \( \kappa \)'s: if \( \gamma , \delta \geq  \omega_1 \) with \( |\gamma| \geq |\delta| \), then every \( \kappa = \aleph_{\gamma+\delta} \) does the job. In particular, letting \(\delta\) vary over all uncountable ordinals we get examples of such \( \kappa \) which are either successors or singular cardinals of any cofinality. 
\end{remark}

The Main Gap Theorem can be taken as evidence that Shelah's notion of a classifiable shallow theory does in fact capture the general idea of model-theoretic ``simplicity''. 
 Such theories appear quite naturally in mathematics: some well-known examples are the theory of algebraically closed fields of fixed characteristic (along with all uncountably categorical theories) and the theory of the additive group of integers.

The reader may wonder why so far we have only considered \emph{uncountable} models. One thing to note is that, in contrast to the uncountable case, we do not yet know how many \emph{countable} models a  theory \( T \) may have in general. Indeed, Vaught's conjecture, asserting that either
\( I(\aleph_0, T) \leq \omega \) or \( I(\aleph_0, T) = 2^{\aleph_0} \), is still one of the major open problems in model theory. 
One of the strategies devised to tackle this problem in the Nineties was that of using methods from (classical) descriptive 
set theory. The starting point of this approach is that countable structures can naturally be coded as elements of the 
Cantor space $\pre{\omega}{2}$ (i.e. countable binary sequences), so that the isomorphism relation \( \cong^\omega_T \) on 
countable models of \( T \) may be construed as an analytic equivalence relation on such space. Some progress has been 
obtained through this method: for example, Silver's theorem~\cite{Sil80} yields that Vaught's conjecture holds for those 
theories \(  T \) for which the isomorphism relation \( \cong^\omega_T \) is Borel. The latter condition may be seen as a 
simplicity notion itself. Indeed, \( \cong^\omega_T \) is Borel if and only if there is an ``effective'' procedure which, using 
only countable set-theoretical operations such as unions, intersections, and complements, allows us to determine whether 
two countable models of \( T \) are isomorphic or not --- in other words, there is a Borel procedure to classify the countable 
models of \( T \) up to isomorphism. Unfortunately, there is no relation between Shelah's classification of \( T \) in terms of its stability properties and the simplicity of \( \cong^\omega_T \) in the descriptive set-theoretic sense: for example, the theory of dense linear orders is unstable, 
but the isomorphism relation on its countable models is very simple (it is a Borel equivalence relation with Borel rank 2 and only \( 4 \) different classes); conversely, in~\cite{MK11} it is shown that there are theories \( T \) which are very simple stability-wise, but such that \( \cong^\omega_T \) is not even Borel.

This failure forces us to move to the uncountable setting again. Replacing \( \omega \) with an uncountable cardinal \( \kappa \), it is easy to check that, up to isomorphism, all \( \kappa \)-sized structures can be coded as elements of the \emph{generalized Cantor space} \( \pre{\kappa}{2} \), i.e.\ the space of all binary \( \kappa \)-sequences equipped with the so-called bounded topology, a natural generalization of the standard topology on \( \pre{\omega}{2} \) (see Section~\ref{sec:genDST}). Despite the fact that \( \pre{\kappa}{2} \) is no longer a Polish space, it is still possible to naturally mirror all classical definitions in the new setting: for example, Borel sets are replaced by \( \kappa^+ \)-Borel ones (i.e.\ by the sets in the smallest \( \kappa^+ \)-algebra generated by the open sets), analytic sets are replaced by \( \kappa \)-analytic ones (i.e.\ continuous images of \( \kappa^+ \)-Borel sets), and so on. Even though the resulting theory, which is nowadays called \emph{generalized descriptive set theory}, presents many differences from the classical theory and is severely affected by a myriad of independence phenomena already for very simple sets, some of the basic features are preserved. For example, in~\cite[Lemma 4.15, and Proposition 4.19]{AMR16} it is shown that the \( \kappa^+ \)-Borel subsets of \( \pre{\kappa}{2} \) can be stratified in a hierarchy with \( \kappa^+ \)-many levels,%
\footnote{If \( \kappa^{< \kappa} \neq \kappa \), the argument to prove this is quite different from the one used in the classical setting \( \kappa = \omega \).}
 so that to each \( \kappa^+ \)-Borel set \( A \subseteq \pre{\kappa}{2} \) we can assign an ordinal \( \mathrm{rk}_B(A) < \kappa^+ \), called \emph{Borel rank}, measuring its complexity. For simplicity of notation, we stipulate that \( \mathrm{rk}_B(A) = \infty \) whenever \( A \) is not \( \kappa^+ \)-Borel.

Working in this new setup, one can show that the set of (codes for) \( \kappa \)-sized models of a given theory \( T \) form a \( \kappa^+ \)-Borel set, and that the isomorphism relation on it, which will be denoted by \( \cong^\kappa_T \), is a \( \kappa \)-analytic equivalence relation. It is thus natural to ask how much ``simple'' \( \cong^\kappa_T \) is from the (generalized) descriptive set-theoretical point of view.

\begin{question} \label{quest:FHK}
For which theories \( T \) and which uncountable cardinals \( \kappa \)  does it happen that \( \cong^\kappa_T \)
is \( \kappa^+ \)-Borel?  What can we say about the Borel rank of \( \cong^\kappa_T \)?
\end{question}

A finer question is

\begin{question} \label{quest:FHK2}
Does the \( \kappa^+ \)-Borelness (and/or the Borel rank) of \( \cong^\kappa_T \) depend on both parameters \( \kappa \) and \(T \), or does it just depend on the theory \( T \)?
\end{question}

In the latter case one could regard the theory \( T \) itself as ``simple'' if some/any of the \( \cong^\kappa_T \)'s is \( \kappa^+ \)-Borel.

The first part of Question~\ref{quest:FHK} and Question~\ref{quest:FHK2} were answered by S.-D.\ Friedman, T.\ Hyttinen, and V.\ Kulikov in their impressive and seminal work~\cite{FHK14}.

\begin{theorem}[S.-D.\ Friedman-Hyttinen-Kulikov, {\cite[Theorem 63]{FHK14}}] \label{thm:FHK}
Let \( \kappa \) be such that%
\footnote{In~\cite[Theorem 63]{FHK14} it is further required that \( \kappa \) is not weakly inaccessible. However, as it can be checked following the proofs below, there is no need to add this restriction to obtain the above result and its refinement.}
 \( \kappa^{< \kappa} = \kappa > 2^{\aleph_0} \).
\begin{enumerate-(1)}
\item
If \( T \) is classifiable shallow, then \( \cong^\kappa_T \) is \( \kappa^+ \)-Borel.
\item
If \( T \) is not classifiable shallow, then \( \cong^\kappa_T \) is \emph{not} \( \kappa^+ \)-Borel.
\end{enumerate-(1)}
\end{theorem}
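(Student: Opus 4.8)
The plan is to reduce both directions to a single invariant: the supremum, over all \( \kappa \)-sized models \( A \models T \), of the \( \mathcal{L}_{\infty\kappa} \)-Scott height of \( A \). First I would fix the Ehrenfeucht--Fra\"iss\'e approximations to isomorphism. For each \( \theta < \kappa^+ \) let \( \cong^\kappa_\theta \) be the relation ``player II wins the dynamic EF-game of length \( \theta \) on \( A \) and \( B \)''; each \( \cong^\kappa_\theta \) is \( \kappa^+ \)-Borel (of Borel rank essentially \( \theta \)), and \( A \cong B \) if and only if \( A \cong^\kappa_\theta B \) for every \( \theta < \kappa^+ \). The assumption \( \kappa^{<\kappa} = \kappa \) is exactly what makes each approximation \( \kappa^+ \)-Borel and lets the game tree, of size \( \kappa \), be coded in \( \pre{\kappa}{2} \). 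The key reduction is then: since the set of codes of \( \kappa \)-sized models of \( T \) is itself \( \kappa^+ \)-Borel, \( \cong^\kappa_T \) is \( \kappa^+ \)-Borel if and only if there is a single \( \theta < \kappa^+ \) with \( \cong^\kappa_T \, = \, \cong^\kappa_\theta \) on models of \( T \) --- equivalently, if and only if the Scott heights of the \( \kappa \)-sized models of \( T \) are bounded below \( \kappa^+ \).

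For part~(1), suppose \( T \) is classifiable shallow of depth \( \alpha \); by \cite{La85} we may assume \( \alpha < \omega_1 \). Here I would invoke Shelah's structure theory underlying Theorem~\ref{thm:She}: every \( \kappa \)-sized model of \( T \) is prime over, hence determined up to isomorphism by, a well-founded labeled tree of rank bounded in terms of \( \alpha \), with labels (small submodels) of size \( \leq 2^{\aleph_0} < \kappa \). Two consequences matter. First, \( \mathcal{L}_{\infty\kappa} \)-equivalent \( \kappa \)-sized models of \( T \) are isomorphic. Second, the bound on the tree rank translates into a bound \( \theta_\alpha < \kappa^+ \) on the number of EF-rounds needed to witness non-isomorphism, so that \( \cong^\kappa_T \, = \, \cong^\kappa_{\theta_\alpha} \) on models of \( T \). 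Intersecting the \( \kappa^+ \)-Borel relation \( \cong^\kappa_{\theta_\alpha} \) with the \( \kappa^+ \)-Borel set of codes of models of \( T \) gives that \( \cong^\kappa_T \) is \( \kappa^+ \)-Borel.

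For part~(2), suppose \( T \) is not classifiable shallow; I would show the Scott heights of its \( \kappa \)-sized models are cofinal in \( \kappa^+ \), which by the reduction of the first paragraph rules out \( \kappa^+ \)-Borelness. Split into cases. If \( T \) is not classifiable --- unsuperstable, or superstable with DOP, or superstable with OTOP --- use the theory-specific coding constructions which, from an ordered or colored tree \( t \) on \( \kappa \), build a model \( A_t \models T \) of size \( \kappa \) so that \( t \mapsto A_t \) is a reduction carrying large well-founded rank of \( t \) to large Scott height of \( A_t \). If instead \( T \) is classifiable but deep, the decomposition trees are ill-founded, so one can directly embed well-founded trees of arbitrarily large rank \( < \kappa^+ \) into the invariants of \( \kappa \)-sized models. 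In either case, because there exist \( \kappa \)-sized trees of unboundedly large rank below \( \kappa^+ \), the resulting Scott heights are cofinal in \( \kappa^+ \), so \( \cong^\kappa_T \) is not \( \kappa^+ \)-Borel.

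The main obstacle is part~(2): the model-theoretic codings for the separate non-classifiable regimes (unsuperstability, DOP, OTOP) are delicate and must be verified both to be genuine (Borel) reductions and to transfer tree rank into Scott height. A secondary subtlety, flagged in the footnote to the statement, is making the ``bounded Scott height \( \iff \) \( \kappa^+ \)-Borel'' equivalence of the first paragraph robust when \( \kappa \) is allowed to be weakly inaccessible, since several of the coding arguments were originally phrased under the extra hypothesis that \( \kappa \) is a successor.
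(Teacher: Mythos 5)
Your plan follows essentially the same route as the paper: reduce $\kappa^+$-Borelness of $\cong^\kappa_T$ to boundedness of the $\mathcal{L}_{\infty\kappa}$-Scott heights of the $\kappa$-sized models (the paper's Theorems~\ref{link1} and~\ref{link2}, packaged as Theorem~\ref{orig3}, where the nontrivial ``Borel $\Rightarrow$ bounded'' direction goes through the L\'opez-Escobar-type Theorem~\ref{lopezext}), and then quote Shelah's structure/non-structure results to bound the Scott heights in the shallow case and to make them cofinal in $\kappa^+$ (or equal to $\infty$) otherwise, exactly as in Corollary~\ref{scottdepth} and Theorem~\ref{thm:DMGT}. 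The proposal is correct in outline and matches the paper's argument.
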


Remarkably, the dividing line distinguishing whether \( \cong^\kappa_T \) is \( \kappa^+ \)-Borel or not is thus the same as in Shelah's Main Gap Theorem~\ref{thm:She}. The following question, although with a slightly different formulation, may be found as  one of the main open problems listed in~\cite[Chapter 7]{FHK14}.

\begin{question} \label{ques:main}
If \( T \) is classifiable shallow, what is the Borel rank of \( \cong^\kappa_T \)? Is it related to the depth of \( T \)?
\end{question}

The goal of this paper is precisely to address this and other related problems. After proving  in Section~\ref{sec:genDST} some (old and new) preliminary results about generalized descriptive set theory, as a first step we provide in Section~\ref{subsec:categoricity}
a purely descriptive set-theoretical characterizion of \( \kappa \)-categoricity by showing that all \( \kappa \)-sized models of a theory \( T \) are isomorphic (i.e.\ \( T \) is \( \kappa \)-categorical) if and only if \( \cong^\kappa_T \) is (cl)open (Theorem~\ref{thm:catcharbounded}).

In Section~\ref{sec:ScottVsBorel} we carefully analize the Friedman-Hyttinen-Kulikov's proof of Theorem~\ref{thm:FHK} and obtain the following result (see also Theorem~\ref{orig3}) connecting the Borel rank of \( \cong^\kappa_T \) to the \( \mathcal{L}_{\infty \kappa} \)-Scott height of its \( \kappa \)-sized models, which may be of independent interest. Given a theory \( T \) and a cardinal \( \kappa \geq \aleph_1 \), set
\[ 
B(\kappa,T) = \mathrm{rk}_B(\cong^\kappa_T) .
 \] 
Recall that if \( \cong^\kappa_T \) is \( \kappa^+\)-Borel the obvious bounds on \( B(\kappa,T) \) are \( 0 \leq B(\kappa,T) < \kappa^+ \), while if \( \cong^\kappa_T \) is not \( \kappa^+ \)-Borel then \( B(\kappa,T) = \infty \) (which here is considered as the maximal complexity). Let also \( S(\kappa,T) \) be the supremum of the \( \mathcal{L}_{\infty \kappa} \)-Scott heights of the \( \kappa \)-sized models of the theory \( T \) (see Section~\ref{inf}). It can be shown that \( S(\kappa,T) \) is either \( \leq \kappa^+ \) or else undefined, in which case we set \( S(\kappa,T)= \infty \).

\begin{theorem} \label{thm:ranksintro}
Let \( \kappa \) be such that \( \kappa^{< \kappa} = \kappa \).  Then
\begin{itemize}
\item
if \( S(\kappa,T) < \kappa^+ \), then \( B(\kappa,T) \leq 2 S(\kappa,T) < \kappa^+ \);
\item
if \( B(\kappa,T) \neq \infty \), then \( S(\kappa,T) \leq  B(\kappa,T)< \kappa^+ \).
\end{itemize} 
In particular, \( B(\kappa,T) \) and \( S(\kappa,T) \) always have finite distance.
\end{theorem}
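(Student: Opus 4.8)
The plan is to connect the two invariants through the \emph{back-and-forth hierarchy} of equivalence relations $\{\equiv_\alpha\}_{\alpha<\kappa^+}$ on $\kappa$-sized structures (equivalently, through the Ehrenfeucht--Fra\"iss\'e games $\EF_\alpha$), where $\equiv_0$ compares quantifier-free types, $M\equiv_{\alpha+1}N$ asks that for every element $a$ of one structure there be $b$ in the other with $(M,a)\equiv_\alpha(N,b)$ and symmetrically, and $\equiv_\lambda=\bigcap_{\alpha<\lambda}\equiv_\alpha$ at limits. Two inputs drive the argument. First, the \emph{generalized Scott isomorphism theorem}: under $\kappa^{<\kappa}=\kappa$, if every $\kappa$-sized model of $T$ has $\mathcal{L}_{\infty\kappa}$-Scott height $\leq\sigma$, then $\cong^\kappa_T$ and $\equiv_\sigma$ coincide on the $\kappa$-sized models of $T$. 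Second, a \emph{level-correspondence} principle, extracted from the Friedman--Hyttinen--Kulikov analysis, relating the number of back-and-forth rounds to the Borel rank: if $M\equiv_\beta N$ then $M$ and $N$ belong to exactly the same $\kappa^+$-Borel sets of rank $\leq\beta$.

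For the upper bound $B(\kappa,T)\leq 2\cdot S(\kappa,T)$, I would first prove by induction on $\alpha$ that each $\equiv_\alpha$, viewed as a subset of $(\pre{\kappa}{2})^2$, is $\kappa^+$-Borel of rank at most $2\cdot\alpha$. The base case is clopen; at a successor step the ``forth and back'' condition is a conjunction of two formulas of the form $\forall a\,\exists b\,(\cdots)$, and since the inner relation has rank $\leq 2\cdot\alpha$ by induction, the existential quantifier (a $\kappa$-sized union) raises the rank to a $\Sigma$-set of rank $2\cdot\alpha+1$ and the universal quantifier (a $\kappa$-sized intersection) by one more, giving rank $\leq 2\cdot\alpha+2=2\cdot(\alpha+1)$; the conjunction stays at this level, and limit stages are $\kappa$-sized intersections handled by taking suprema. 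This clean bookkeeping relies on the standing hypothesis $\kappa^{<\kappa}=\kappa$, which makes the $\kappa^+$-Borel hierarchy well-stratified. Writing $\sigma=S(\kappa,T)<\kappa^+$, the Scott isomorphism theorem yields $\cong^\kappa_T={\equiv_\sigma}$ on $T$-models, whence $B(\kappa,T)=\mathrm{rk}_B(\cong^\kappa_T)\leq 2\cdot\sigma$.

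For the lower bound $S(\kappa,T)\leq B(\kappa,T)$, assume $\cong^\kappa_T$ is $\kappa^+$-Borel and set $\beta=B(\kappa,T)$. For a fixed model $M$, its isomorphism class is a section of $\cong^\kappa_T$ and hence $\kappa^+$-Borel of rank $\leq\beta$. By the level-correspondence principle, if $N\equiv_\beta M$ then $N$ lies in exactly the same rank-$\leq\beta$ Borel sets as $M$; applying this to the set $[M]_\cong$, which contains $M$, gives $N\cong M$. Thus $\equiv_\beta$ refines $\cong$ on $T$-models, and since the reverse refinement is trivial we obtain ${\equiv_\beta}={\cong^\kappa_T}$. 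This says the back-and-forth sequence of every $\kappa$-sized model of $T$ has already stabilized by stage $\beta$, i.e.\ every such model has Scott height $\leq\beta$, so $S(\kappa,T)\leq\beta$.

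The main obstacle is pinning down the level-correspondence principle with exact round/rank bookkeeping in the generalized setting: one must verify that the $\kappa$-sized unions and intersections produced by the game quantifiers really land one level higher in the $\kappa^+$-Borel hierarchy (precisely where $\kappa^{<\kappa}=\kappa$ is used), and that the game-theoretic and Borel-rank stratifications are aligned. This is the heart of the Friedman--Hyttinen--Kulikov machinery that the paper reworks. Finally, the ``finite distance'' claim is a purely ordinal-arithmetic remark: writing $\sigma=\lambda+n$ with $\lambda$ a limit or $0$ and $n<\omega$, left-distributivity and $2\cdot\lambda=\lambda$ give $2\cdot\sigma=\lambda+2n$, so the sandwich $\sigma\leq B(\kappa,T)\leq 2\cdot\sigma$ forces $B(\kappa,T)$ to differ from $S(\kappa,T)$ by at most the finite quantity $n$.
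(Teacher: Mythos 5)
Your overall strategy is the same as the paper's: both directions run through the correspondence between \( \L_{\infty\kappa} \)-quantifier rank, the games \( \EF^\kappa_\alpha \), and the \( \kappa^+ \)-Borel hierarchy, and your bookkeeping (two Borel levels per back-and-forth round, hence \( B \leq 2S \); ordinal arithmetic for the finite-distance claim) matches Theorems~\ref{link1}, \ref{link2} and~\ref{orig3}. Two remarks on the packaging. For the upper bound the paper codes \( \cong^\kappa_T \) by the tree of partial runs of \( \EF^\kappa_{\Tr_{S(\kappa,T)}} \) and invokes Theorem~\ref{orig1}, whereas you induct directly on the back-and-forth relations; this is the same computation, but note that the successor clause must allow extensions by partial maps of size \( < \kappa \) (quantifying over single elements gives the \( \L_{\infty\omega} \) back-and-forth, which is too coarse to match the \( \L_{\infty\kappa} \)-Scott height used here), and it is exactly in bounding the number of such extensions by \( \kappa^{<\kappa} = \kappa \), so that the resulting unions and intersections are \( \kappa \)-sized, that the cardinal hypothesis enters --- not merely in stratifying the hierarchy. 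For the lower bound your route (apply the level-correspondence to each class \( [\M]_{\cong} \), which has rank \( \leq B(\kappa,T) \) by Fact~\ref{fct:fromertoclass}) is in fact more direct than the paper's, which codes pairs of structures as single structures with an extra unary predicate and applies the correspondence once to the pulled-back isomorphism relation.

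The one genuine problem is your ``level-correspondence principle'': as stated (``if \( \M \equiv_\beta \N \) then \( \M \) and \( \N \) belong to exactly the same \( \kappa^+ \)-Borel sets of rank \( \leq \beta \)'') it is false, since already a basic clopen set \( \Nbhd_\Q \), of rank \( 0 \), separates a structure from its isomorphic copies. The correct statement requires the Borel set to be \emph{closed under isomorphism}, and then asserts \( \equiv_{\max\{\beta,1\}} \)-invariance; this is precisely the rank-sensitive L\'opez-Escobar theorem (Theorem~\ref{lopezext}) combined with Theorem~\ref{qr}, and proving that quantifier-rank bound is the real content of the argument rather than a routine verification. Your application is to \( [\M]_{\cong} \), which is invariant, so the argument survives the correction, but it only yields \( S(\kappa,T) \leq \max\{B(\kappa,T),1\} \), because invariant clopen sets may require quantifier rank \( 1 \). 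To remove the \( \max \) when \( B(\kappa,T) = 0 \) you need the separate observation that a clopen \( \cong^\kappa_T \) forces \( T \) to be \( \kappa \)-categorical (Theorem~\ref{thm:catcharbounded}), whence \( S(\kappa,T) = 0 \); the paper handles this edge case explicitly in Theorem~\ref{orig3}\ref{orig3-1}.
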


This kind of analysis actually applies to a wider setup: indeed, instead of considering just the models of a given first-order theory \( T \), we can pick \emph{any} collection of \( \kappa \)-sized models  \( \ElCl \) closed under isomorphism, and obtain an analogue of Theorem~\ref{thm:ranksintro} for the isomorphism relation \( \cong^\kappa_{\ElCl}\) on \( \ElCl \).
This yields  the following corollary, which generalizes to uncountable \( \kappa \)'s (and to a slightly more general setting) a result obtained in the countable case \( \kappa = \omega \) by Becker and Kechris~\cite[Corollary 7.1.4]{BK96}.

\begin{cor}
Let  \( \kappa \) be such that \( \kappa^{< \kappa} = \kappa \), and let \( \ElCl \) be any collection of \( \kappa \)-sized models closed under isomorphism. Then \( \cong^\kappa_{\ElCl} \) is \( \kappa^+ \)-Borel if and only if there is \( \beta < \kappa^+ \) such that the \( \mathcal{L}_{\infty \kappa} \)-Scott height of any structure in \( \ElCl \) is \( \leq \beta \).
\end{cor}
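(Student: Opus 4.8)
The plan is to deduce the corollary from the version of Theorem~\ref{thm:ranksintro} for an arbitrary isomorphism-closed class, announced in the paragraph preceding the statement. So the first thing I would do is record the two relevant invariants for \( \ElCl \): set \( \mathrm{rk}_B(\cong^\kappa_{\ElCl}) = B(\kappa,\ElCl) \) and let \( S(\kappa,\ElCl) \) be the supremum of the \( \mathcal{L}_{\infty\kappa} \)-Scott heights of the structures in \( \ElCl \), with the same conventions as for \( T \) (so \( B(\kappa,\ElCl) = \infty \) exactly when \( \cong^\kappa_{\ElCl} \) fails to be \( \kappa^+ \)-Borel, and \( S(\kappa,\ElCl) = \infty \) exactly when the supremum is not \( < \kappa^+ \)). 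The remaining point is purely a matter of translation: the existence of a single \( \beta < \kappa^+ \) bounding all the Scott heights of members of \( \ElCl \) is exactly the assertion \( S(\kappa,\ElCl) < \kappa^+ \), while \( \kappa^+ \)-Borelness of \( \cong^\kappa_{\ElCl} \) is exactly \( B(\kappa,\ElCl) \neq \infty \), i.e.\ \( B(\kappa,\ElCl) < \kappa^+ \).

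Granting the \( \ElCl \)-analogue of Theorem~\ref{thm:ranksintro}, both implications are then immediate. For the direction from bounded Scott heights to Borelness, assume there is \( \beta < \kappa^+ \) bounding all Scott heights; then \( S(\kappa,\ElCl) \leq \beta < \kappa^+ \), and the first clause yields \( B(\kappa,\ElCl) \leq 2 S(\kappa,\ElCl) < \kappa^+ \), so \( \cong^\kappa_{\ElCl} \) is \( \kappa^+ \)-Borel. Conversely, if \( \cong^\kappa_{\ElCl} \) is \( \kappa^+ \)-Borel then \( B(\kappa,\ElCl) \neq \infty \), and the second clause gives \( S(\kappa,\ElCl) \leq B(\kappa,\ElCl) < \kappa^+ \); taking \( \beta = B(\kappa,\ElCl) \) provides the required uniform bound on the Scott heights.

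Thus the real content lies in establishing the \( \ElCl \)-analogue of Theorem~\ref{thm:ranksintro}, and this is where I would spend the effort. I would revisit the proof of Theorem~\ref{thm:ranksintro} (equivalently, the Friedman--Hyttinen--Kulikov argument dissected in Section~\ref{sec:ScottVsBorel}) and check that the only feature of a first-order theory \( T \) actually used is that its class of \( \kappa \)-sized models is closed under isomorphism. Indeed, the link between Borel rank and Scott height rests on expressing \( \cong^\kappa \) through the bounded approximations given by the Ehrenfeucht--Fra\"iss\'e games and on the Scott analysis of \( \mathcal{L}_{\infty\kappa} \)-equivalence (Section~\ref{inf}): a bound \( \beta \) on Scott heights forces isomorphism on the class to coincide with the \( \kappa^+ \)-Borel relation ``player \( \2 \) wins the length-\( 2\beta \) game'', and conversely a \( \kappa^+ \)-Borel rank limits how far the back-and-forth hierarchy can separate non-isomorphic members. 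None of this invokes completeness or first-order axiomatizability of \( T \), so every step should survive the replacement of the models of \( T \) by the members of \( \ElCl \), provided \( \ElCl \) is invariant (so that \( \cong^\kappa_{\ElCl} \) is a genuine invariant equivalence relation and the Scott analysis is about complete invariants).

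The main obstacle I anticipate is the bookkeeping around \emph{where} \( \cong^\kappa_{\ElCl} \) lives, since \( \ElCl \) is allowed to be an arbitrary invariant class rather than a \( \kappa^+ \)-Borel one. Following the Becker--Kechris template, I would read \( \cong^\kappa_{\ElCl} \) as the isomorphism relation on the subspace of codes for members of \( \ElCl \), equipped with the subspace \( \kappa^+ \)-Borel structure; then in the first clause the identity \( \cong^\kappa_{\ElCl} = R \cap \ElCl^2 \), with \( R \) the above bounded-game relation, exhibits it as the trace of a \( \kappa^+ \)-Borel set, with no need for \( \ElCl \) itself to be \( \kappa^+ \)-Borel. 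The genuinely nontrivial half is the second clause --- that Scott heights cofinal in \( \kappa^+ \) preclude \( \kappa^+ \)-Borelness --- and I expect this to be where the Friedman--Hyttinen--Kulikov machinery must be confirmed to go through verbatim: from unbounded Scott heights one has to manufacture a witness that \( \cong^\kappa_{\ElCl} \) cannot be captured at any fixed level of the Borel hierarchy, and checking that this pumping argument uses only invariance of \( \ElCl \) is the crux of the proof.
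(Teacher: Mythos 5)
Your proposal matches the paper's route: the corollary is indeed obtained as an immediate translation of the \( \ElCl \)-analogue of Theorem~\ref{thm:ranksintro}, which the paper establishes as Theorem~\ref{orig3} (via Theorems~\ref{link1} and~\ref{link2}) by running the Friedman--Hyttinen--Kulikov argument directly for arbitrary invariant sets, exactly as you anticipate. Your reading of the two directions (bounded Scott height \( \IFF S(\kappa,\ElCl) < \kappa^+ \), Borelness \( \IFF B(\kappa,\ElCl) \neq \infty \)) and your observation that only invariance of \( \ElCl \) is used are both correct; the only microscopic imprecision is that for a general invariant set the bound is \( S(\kappa,\ElCl) \leq \max\{B(\kappa,\ElCl),1\} \) rather than \( S(\kappa,\ElCl) \leq B(\kappa,\ElCl) \), which changes nothing here.
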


In Section~\ref{sec:descriptivemaingap}
we use the previous results to solve the ``Borel analogue'' of the spectrum problem, thus sharpening Theorem~\ref{thm:FHK} and answering (at least partially) Question~\ref{ques:main}. 

\begin{theorem}[Descriptive Main Gap Theorem] \label{thm:main}
Let \( \kappa \) be such that \( \kappa^{< \kappa} = \kappa > 2^{\aleph_0} \).
\begin{enumerate-(1)}
\item \label{thm:maincase1}
If \( T \) is classifiable shallow of depth \(\alpha\), then \( B(\kappa,T) \leq  4 \alpha \).
\item
If \( T \) is not classifiable shallow, then \( B(\kappa,T)  = \infty \).
\end{enumerate-(1)}
\end{theorem}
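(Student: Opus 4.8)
The plan is to derive both clauses from the rank comparison in Theorem~\ref{thm:ranksintro} together with a Scott-height analysis of classifiable shallow theories. Clause (2) is immediate: if \( T \) is not classifiable shallow then \( \cong^\kappa_T \) fails to be \( \kappa^+ \)-Borel by Theorem~\ref{thm:FHK}(2), whence \( B(\kappa,T) = \infty \) by our convention on the Borel rank of non-Borel sets. So the entire content lies in clause (1).

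For clause (1) I would first reduce the target inequality \( B(\kappa,T) \leq 4\alpha \) to a bound on the supremum of Scott heights, namely
\[
S(\kappa,T) \leq 2\alpha .
\]
Indeed, classifiable shallow theories have countable depth by~\cite[Th\'eor\`eme 4.1]{La85}, so \( \alpha < \omega_1 \leq \kappa^+ \); once the displayed bound is established we get \( S(\kappa,T) < \kappa^+ \), and the first clause of Theorem~\ref{thm:ranksintro} yields \( B(\kappa,T) \leq 2 S(\kappa,T) \leq 4\alpha \), using associativity of ordinal multiplication (\( 2 \cdot (2 \cdot \alpha) = 4 \cdot \alpha \)). (Alternatively \( S(\kappa,T) < \kappa^+ \) already follows from Theorem~\ref{thm:FHK}(1) via the second clause of Theorem~\ref{thm:ranksintro}, but it is the direct bound that pins down the constant.) Thus everything hinges on the purely model-theoretic claim that the \( \mathcal{L}_{\infty \kappa} \)-Scott height of a \( \kappa \)-sized model of \( T \) is bounded linearly in the depth.

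To prove \( S(\kappa,T) \leq 2\alpha \) it suffices to bound, uniformly in \( \mathcal{M} \), the Scott height of each \( \kappa \)-sized \( \mathcal{M} \models T \). I would use Shelah's decomposition of the models of a classifiable theory, in the tree form underlying the Friedman-Hyttinen-Kulikov argument \cite{FHK14}: \( \mathcal{M} \) is prime and minimal over a system of elementary submodels of size \( \leq 2^{\aleph_0} \) indexed by a well-founded subtree of \( \pre{<\omega}{\kappa} \) of rank \( \leq \alpha \). The Scott height is the stage at which the back-and-forth relations \( \equiv^\beta \) on tuples of length \( <\kappa \) of \( \mathcal{M} \) stabilize. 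I would show that \( \equiv^\beta \)-equivalence of two tuples is governed, up to a rank depending on \( \beta \), by the isomorphism type of the portion of the decomposition tree lying above them together with the attached labels; since \( \kappa > 2^{\aleph_0} \), a single move of the dynamic Ehrenfeucht--Fra\"iss\'e game (which may involve \( <\kappa \) elements at a time) can absorb an entire label of size \( \leq 2^{\aleph_0} \). A transfinite induction on the well-founded rank then shows that pinning down the tree above a tuple to rank \( \rho \) costs at most \( 2\rho \) rounds, so the relations stabilize by stage \( 2\alpha \) and the Scott height of \( \mathcal{M} \) is \( \leq 2\alpha \).

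The main obstacle is exactly this correspondence between back-and-forth rank and well-founded tree rank: converting Shelah's structure theory into a bona fide Scott analysis, and bookkeeping carefully enough to obtain the linear bound with constant \( 2 \), keeping track simultaneously of the descent through the tree (which produces the rank) and of the matching of the \( \leq 2^{\aleph_0} \)-sized labels (which the cardinal arithmetic \( \kappa^{<\kappa} = \kappa > 2^{\aleph_0} \) collapses into single bounded moves). One must also check that the bound is uniform over all \( \kappa \)-sized models, so that the supremum \( S(\kappa,T) \) inherits it. Once the Scott-height bound is secured, the factor \( 4 \) in the statement is purely formal, arising from the factor \( 2 \) in Theorem~\ref{thm:ranksintro}.
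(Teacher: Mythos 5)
Your overall architecture is exactly the paper's: clause (2) reduces to the non-Borelness half of the Friedman--Hyttinen--Kulikov theorem, and clause (1) reduces to the Scott-height bound \( S(\kappa,T) \leq 2\alpha \), from which \( B(\kappa,T) \leq 2S(\kappa,T) \leq 4\alpha \) follows by Theorem~\ref{thm:ranksintro} (in the paper, Theorem~\ref{orig3}\ref{orig3-2} applied with \( \ElCl = \Mod^\kappa_T \); your remark that \( 2\cdot(2\cdot\alpha) = 4\cdot\alpha \) by associativity is the right reading of the ordinal arithmetic). The reduction itself is sound.

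The gap is in how \( S(\kappa,T) \leq 2\alpha \) is obtained. The paper does \emph{not} prove this bound: it quotes Shelah directly --- \cite[Theorem XIII.1.5]{Sh00}, whose hypotheses are verified because every countable NOTOP theory has the existence property by \cite[Conclusion XII.5.14]{Sh00} --- and combines it with Lascar's theorem that the depth is countable to record \( S(\kappa,T) \leq 2\alpha < \omega_1 \) (Corollary~\ref{scottdepth}). What you propose instead is a from-scratch derivation of this bound via Shelah's decomposition into a well-founded tree of \( \leq 2^{\aleph_0} \)-sized submodels. That is precisely the hard model-theoretic content of Shelah's Theorem XIII.1.5, and your sketch does not establish it: the central inductive claim, that pinning down the decomposition tree above a tuple to well-founded rank \( \rho \) costs at most \( 2\rho \) rounds of the dynamic EF game, is asserted rather than proved, and you yourself identify it as ``the main obstacle.'' One would need, at a minimum, to say what the invariant carried through the induction is (which partial isomorphisms between labelled subtrees are preserved at stage \( \beta \)), why primeness and minimality over the skeleton let player \( \mathbf{II} \) extend a match of skeletons to a partial isomorphism of the models, and why each label of size \( \leq 2^{\aleph_0} < \kappa \) really is absorbed in a single move. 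As written, the proof is complete only if you are willing to cite Shelah for the Scott-height bound, as the paper does.

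Two smaller remarks. First, the paper's own route to clause (2) does not invoke \cite[Theorem 63]{FHK14} as a black box but re-derives it from Corollary~\ref{scottdepth} and Theorem~\ref{orig3}\ref{orig3-1}: non-classifiable gives \( S(\kappa,T) = \infty \) and classifiable deep gives \( S(\kappa,T) = \kappa^+ \), either of which forces \( B(\kappa,T) = \infty \) since \( S(\kappa,T) \leq \max\{B(\kappa,T),1\} < \kappa^+ \) whenever \( \cong^\kappa_T \) is \( \kappa^+ \)-Borel. This matters because the literal Theorem 63 of \cite{FHK14} excludes weakly inaccessible \( \kappa \); the paper's statement of Theorem~\ref{thm:FHK} drops that restriction only on the strength of the Section~\ref{sec:descriptivemaingap} arguments, so quoting it wholesale is slightly circular in that case. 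Second, your parenthetical alternative (getting \( S(\kappa,T) < \kappa^+ \) from Theorem~\ref{thm:FHK}(1) via the second clause of Theorem~\ref{thm:ranksintro}) is correct but, as you note, does not yield the constant; only the direct Shelah bound does.
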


Thus in case~\ref{thm:maincase1}, which corresponds exactly to Theorem~\ref{thm:Shelah}\ref{Shelah1}, the ordinal \( B(\kappa,T) \) is almost everywhere dominated by a \emph{constant} function which, unlike Shelah's upper bound on the number of isomorphism types, depends only on the depth \( \alpha \) of the theory and not on the cardinal \( \kappa \) under consideration. Moreover, in view of the above-mentioned fact that  \( \alpha<\aleph_1 \) by~\cite[Th\'eor\`eme 4.1]{La85},  in case~\ref{thm:maincase1} we can get a nontrivial uniform upper bound which is independent of \(\alpha\) as well, namely
\[ 
B(\kappa,T) < \aleph_1 < \kappa^+.
 \] 
In particular, there is no theory \( T \) with \( \cong^\kappa_T \) of uncountable Borel rank.
Another interesting difference from Shelah's Main Gap is that the upper bound on the Borel rank of \( \cong^\kappa_T \) is almost never trivial for the relevant \( \kappa \)'s: for example, under \( \mathsf{GCH} \) the descriptive gap is non-trivial for every regular cardinal \( \kappa \geq \aleph_2 \) (in particular, for all the successors, with the possible exception of \( \aleph_1 \)).

 Summing up all the mentioned results, we get the  picture described in Table~\ref{tab:sumup} strictly relating the model-theoretic properties of \( T \), the number of its \( \kappa \)-sized models \( I(\kappa,T) \) (up to isomorphism),  the \( \L_{\infty \kappa} \)-Scott height \( S(\kappa,T) \) of \( T \), the topological complexity of \( \cong^\kappa_T \), and its Borel rank \( B(\kappa,T) \).
 
 \begin{table}[h!]
  \begin{center}
    \label{tab:sumup}
    \begin{tabular}{|c|c|c|c|} 
    \hline
    \begin{minipage}[c][1,3cm][c]{2,6cm}\emph{Model-theoretic \\ properties of \( T \)}\end{minipage} & 
    \textbf{\( \kappa \)-categorical} & 
\textbf{Classifiable shallow} & 
    \textbf{Not classifiable shallow} \\
    \hline
     \begin{minipage}[c][1,3cm][c]{2,6cm}\emph{Number of \\ \( \kappa \)-sized models}\end{minipage} & 
    \( I(\kappa,T) = 1 \) \ \ \( ({}^*) \) & 
\( I(\kappa,T) < \beth_{\omega_1}(|\gamma|) \) \ \ \( ({}^\dagger) \)  &
    \( I(\kappa,T) = 2^\kappa \) \ \ \( ({}^\dagger) \) \\
    \hline
     \begin{minipage}[c][1,3cm][c]{2,6cm}\emph{\( \L_{\infty \kappa}\)-Scott \\ height of \( T \)}\end{minipage} &
    \( S(\kappa,T) = 0 \) \ \ \( ({}^* ) \) &
    \( S(\kappa,T) < \aleph_1 \) \ \ \( ({}^\dagger) \) &
\( S(\kappa,T) \in \{ \kappa^+ , \infty \} \) \ \ \( ({}^\dagger) \) \\
    \hline
     \begin{minipage}[c][1,3cm][c]{2,6cm}\emph{Topological \\ complexity of \( \cong^\kappa_T \)}\end{minipage} &
    (Cl)open &
    \( \kappa^+ \)-Borel \ \ \( ({}^\ddagger) \) &
    Not \( \kappa^+ \)-Borel \ \ \( ({}^\ddagger) \) \\
    \hline
     \begin{minipage}[c][1,3cm][c]{2,6cm}\emph{\( \kappa^+ \)-Borel \\ rank of \( \cong^\kappa_T \)}\end{minipage} &
    \( B(\kappa,T) = 0 \) &
    \( B(\kappa,T) < \aleph_1 \) &
    \( B(\kappa,T) = \infty \) \ \ \( ({}^\ddagger) \) \\
    \hline
    \end{tabular}
    
    \bigskip
    
    \caption{Characterizations of some stability notions from model-theory when \( \L \) is a relational language, \( T \) is a countable complete first-order theory, and \( \kappa = \aleph_\gamma \) is an uncountable cardinal such that \( \kappa^{< \kappa} = \kappa > 2^{\aleph_0} \). Entries marked with \( ({}^*) \) are easy reformulations of \( \kappa \)-categoricity, the ones marked with \( ({}^\dagger) \) are due to Shelah~\cite{Sh00}, those marked with \( ({}^\ddagger) \) are due to Friedman, Hyttinen and Kulikov~\cite{FHK14}, while the remaining ones are obtained in this paper.}
  \end{center}
\end{table}

Theorem~\ref{thm:main} imposes \( \aleph_1 \) as an upper bound on the Borel rank of a given 
\( \cong^\kappa_T \), but a full answer to the first part of Question~\ref{ques:main} would require 
to assess which Borel classes with a (necessarily countable) index are actually inhabitated by such an 
isomorphism relation. We address this problem in Sections~\ref{subsec:possiblecomplexities} and~\ref{subsec:someexamples} and provide some partial answers. 
For example, we show that \( \cong^\kappa_T \) can never be a proper 
\( \boldsymbol{\Sigma}^0_\alpha \) set for \( \alpha \) limit (Theorem~\ref{thm:limit}); this is a new (and somehow 
unexpected) observation also in the classical setup of countable models  \( \kappa = \omega \). 
Moreover, when \( \kappa > 2^{\aleph_0} \) satisfies certain additional conditions, then \( \cong^\kappa_T \) 
cannot be a proper \( \boldsymbol{\Sigma}^0_\alpha \) or a proper \( \boldsymbol{\Pi}^0_\alpha \) set 
for \emph{any} ordinal \(\alpha\) (Proposition~\ref{prop:GCH}).  Finally, in Section~\ref{subsec:Borelreducibility} we obtain a variant of the Descriptive Main Gap 
Theorem~\ref{thm:main} in which the complexity of \( \cong^\kappa_T \) is measured using Borel 
reducibility \( \leq^\kappa_B \) rather than  Borel ranks: under certain conditions on \( \kappa \), if \( T \) is classifiable shallow while \( T' \) is not, then \( {\cong^\kappa_T} <_B^\kappa {\cong^\kappa_{T'}} \) and there are equivalence relations lying strictly \( \leq_B^\kappa \)-between the two  (Proposition~\ref{prop:reducibility}).

\section{Generalized descriptive set theory} \label{sec:genDST}

In this section we introduce the tools from generalized descriptive set theory that are used in the sequel. We will prove only those results which are not explicitly proved elsewhere in the literature, referring the reader to~\cite{FHK14,AMR16} for a thorough and detailed exposition of the theory and its basics.

We denote by \( \mathrm{On} \) the class of all ordinal numbers.
Given two sets \( X,Y\) we denote by \( \pre{X}{Y} \) the set of all functions \( f \colon X \to Y \). When \( \alpha \) is an ordinal we set \( \pre{<\alpha}{Y} = \bigcup_{\beta<\alpha} \pre{\beta}{Y} \).
For the rest of this section, let \( \kappa \) be an infinite cardinal.
The following definitions generalize that of the usual Baire and Cantor spaces (which correspond to the case \( \kappa = \omega \)), and of their Borel and analytic subsets.

\begin{defin} \label{def:boundedtopology}
The \emph{generalized Baire space} is the space \( \pre{\kappa}{\kappa} \)  equipped with the (\emph{bounded}) \emph{topology} \( \tau_b \), which is generated by the sets of the form
\begin{equation} \label{eq:basicofbounded}
\Nbhd_p = \{ x \in \pre{\kappa}{\kappa} \mid p \subseteq x \}
 \end{equation}
for \( p \in \pre{<\kappa}{\kappa} \).

The \emph{generalized Cantor space} \( \pre{\kappa}{2} \) is the closed subspace of \( \pre{\kappa}{\kappa} \) consisting of functions taking values in \( 2 = \{ 0,1 \} \).
\end{defin}

For the sake of simplicity, we will develop our theory of \( \kappa^+ \)-Borel sets for subspaces \( X \) of \( \pre{\kappa}{\kappa} \) (endowed with the relativization of the bounded topology \( \tau_b \)), but all definitions and results straightforwardly generalize to their homeomorphic copies.
\begin{defin}
Let \( X \subseteq \pre{\kappa}{\kappa} \) be endowed with the relative topology. A set \( A \subseteq X \) is called \emph{\( \kappa^+ \)-Borel} if it belongs to the \( \kappa^+ \)-algebra generated by the topology of \( X \). The collection of \( \kappa^+ \)-Borel subsets of \( X \) is denoted by \( \mathbf{Bor}(\kappa,X) \). 
\end{defin}

When \( \kappa \) is clear from the context we drop it from both the terminology and the notation above.
As in the classical case, (\(\kappa^+ \)-)Borel sets can be stratified into a hierarchy according to the following recursive definition:
\begin{align*}
\boldsymbol{\Sigma}^0_1(\kappa,X) &= \{ U \subseteq X \mid U \text{ is open} \} &&&
\boldsymbol{\Pi}^0_1(\kappa,X) &= \{ C \subseteq X \mid C \text{ is closed} \} \\
\boldsymbol{\Sigma}^0_\alpha(\kappa,X) &= \left\{ \bigcup_{\gamma<\kappa} A_\gamma \mid A_\gamma\in \bigcup_{1 \leq \beta < \alpha} \boldsymbol{\Pi}^0_\beta(\kappa,X) \right\} &&&
\boldsymbol{\Pi}^0_\alpha(\kappa,X) &= \left\{ X \setminus A  \mid A \in \boldsymbol{\Sigma}^0_\alpha(\kappa,X) \right\} 
\end{align*}
We also set \( \boldsymbol{\Delta}^0_\alpha(\kappa,X) = \boldsymbol{\Sigma}^0_\alpha(\kappa,X) \cap \boldsymbol{\Pi}^0_\alpha(\kappa,X) \), and we again drop \( \kappa \) from the notation whenever this is not a source of confusion.  
As shown in~\cite[Proposition 4.19]{AMR16},  for \( 1 \leq \alpha < \beta \)
\[ 
\boldsymbol{\Delta}^0_\alpha(X) \subsetneq \boldsymbol{\Sigma}^0_\alpha(X), \boldsymbol{\Pi}^0_\alpha(X) \subsetneq \boldsymbol{\Delta}^0_\beta(X),
 \] 
and moreover
\[ 
\mathbf{Bor}(X) = \bigcup_{1 \leq \alpha < \kappa^+} \boldsymbol{\Sigma}^0_\alpha( X) = \bigcup_{1 \leq \alpha < \kappa^+} \boldsymbol{\Pi}^0_\alpha( X) = \bigcup_{1 \leq \alpha < \kappa^+} \boldsymbol{\Delta}^0_\alpha( X).
 \] 
(When \( \kappa^{<\kappa} = \kappa \) one can use the classical arguments as in~\cite[Theorem 22.4]{Kec95}; otherwise a different proof is required.)
Notice also  that if \( \boldsymbol{\Gamma}(X) \) is one of \( \boldsymbol{\Sigma}^0_\alpha(X) \), \( \boldsymbol{\Pi}^0_\alpha(X) \), or \( \mathbf{Bor}(X) \), then for every \( A \subseteq X \)  
\[ 
A \in \boldsymbol{\Gamma}(X) \IFF   A = A' \cap X \text{ for some } A' \in \boldsymbol{\Gamma}(\pre{\kappa}{\kappa}).
\]
If \( A \in \mathbf{Bor}(X) \), the smallest ordinal \( 1 \leq \alpha < \kappa^+ \) such that \( A  \in \boldsymbol{\Sigma}^0_\alpha(X) \cup \boldsymbol{\Pi}^0_\alpha(X) \) is called the \emph{Borel rank}
 of \( A \) and denoted by \( \mathrm{rk}_B(A) \). To simplify some of the statements (and proofs) below, with a little abuse of notation we set \( \boldsymbol{\Sigma}^0_0(X) =  \boldsymbol{\Pi}^0_0(X) = \boldsymbol{\Delta}^0_1(X) \) and \( \mathrm{rk}_B(A) = 0 \) if \( A \in \boldsymbol{\Delta}^0_1(X) \).
 
 The symbol \( \boldsymbol{\Gamma}(X) \) will denote an arbitrary class of the form \( \boldsymbol{\Sigma}^0_\alpha(X) \), \( \boldsymbol{\Pi}^0_\alpha(X) \), or \( \boldsymbol{\Delta}^0_\alpha(X) \). In particular, we say that ``\( A  \) is \( \boldsymbol{\Gamma} (X)\)'' if \( A \in \boldsymbol{\Gamma}(X) \), and that ``\( A \) is a true \( \boldsymbol{\Gamma}(X) \) set'' if
  \( A \in \boldsymbol{\Gamma}(X) \) but it does not belong to any other class as above properly contained in \( \boldsymbol{\Gamma}(X) \). All these notions strictly depend on the ambient space \( X \). Nevertheless, when \( X\) is clear from the context we will remove any reference to it in all the terminology and notation above. This convention will be systematically applied when dealing with an equivalence relation \( E \) on some space \( X \), that is, when discussing the complexity of \( E \) we will always tacitly refer to its ambient space \( X \times X \).
  
\subsection{Borel codes}

Similarly to what happens in the classical case~\cite{Bl81}, \( \kappa^+ \)-Borel sets can be characterized via certain games on well-founded trees which essentially code how the given set is constructed from the clopen sets using the operations of \( \kappa \)-unions and \( \kappa \)-intersections.

	A \textit{tree}  $\Tr = (T, \leq)$ is a (nonempty) partial order with exactly one minimal element, called \textit{root}, and in which the set $\pred_\Tr(p) = \{q\in T\mid q <  p\}$ of predecessors of any \( p \in T \) is a finite linear order.  The elements of a tree are called \textit{nodes}.  The \textit{height} of a node $p\in \Tr$ is the order type (equivalently, the cardinality) of $\pred_{\Tr}(p)$. A \textit{leaf} is a terminal node, i.e.\ a node \( p \in \Tr \) such that \( p \not <  q \) for every \( q \in \Tr \). The tree \( \Tr \) is \emph{well-founded} if  it contains no infinite chain. In this case, we can recursively define the \emph{rank} \( \varrho_\Tr(p) \) of a node \( p \in \Tr \) as follows:
\begin{itemize}
	\item all leaves have rank $0$;
	\item if \( p \) is not a leaf, then $\varrho_\Tr(p)=\sup\{\varrho_\Tr(q)+1\mid p < q \in \Tr\}$.
\end{itemize}
The rank of the well-founded tree \( \Tr \) is \( \varrho(\Tr) = \varrho_\Tr(r) +1 \), where \( r \) is the root of \( \Tr \).
Notice that if \( |\Tr| \leq \kappa \) then $\varrho(\Tr)$ is always a successor ordinal smaller than \( \kappa^+ \). 

Particularly important examples of trees are the \emph{trees of finite sequences} over a set \( A \), namely, \( \Tr \subseteq \pre{<\omega}{A} \) which are closed under initial segments and ordered by end-extensions. If \( S \subseteq \pre{<\omega}{A} \), the \emph{tree generated by \( S \)} is 
\[ 
\Tr(S) = \{ t \in \pre{<\omega}{A} \mid t \subseteq s \text{ for some } s \in S \}.
\]
Notice that for such a tree \( \Tr \) we have that its root is \( \emptyset \) and \( |\Tr| \leq \max \{ \aleph_0,|A| \} \), whence if \( A \) is infinite and \( \Tr \) is well-founded, then \( \varrho(\Tr) < |A|^+ \). 
                                                                                                                                                                                                                                  
Canonical examples of well-founded trees of sequences are the sets \( \Tr_\alpha \) of all strictly decreasing sequences of ordinals \( < \alpha \), for \( \alpha \) any ordinal (so $\Tr_0$ is the singleton containing the empty sequence).
It is well known that such trees are universal among well-founded trees of size \( \kappa \), that is, every \( \kappa \)-sized well-founded tree embeds in \( \Tr_\alpha \) for some \( \alpha < \kappa^+ \). The next result makes explicit the dependence of such an \(\alpha\) from the rank of the tree under consideration.

\begin{lemma}\label{trees}
Let \( \kappa \) be an infinite cardinal. Every well-founded tree \( \Tr \) of size \( \leq \kappa \) and rank \( \beta +1 < \kappa^+ \) can be embedded into $\Tr_{\kappa \cdot \beta}$. Moreover, for every  \( \beta < \kappa^+ \) there exists a tree of size \( \leq \kappa \) and rank \( \beta+1 \)  which does not embed in any \( \Tr_{\alpha} \) with \( \alpha < \kappa \cdot \beta \).
\end{lemma}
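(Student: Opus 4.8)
The two halves of the statement call for different techniques, so the plan is to treat them separately after recording two basic facts about the canonical trees \( \Tr_\alpha \). First I would compute the rank function of \( \Tr_\alpha \): a nonempty node \( s = \langle \gamma_0, \dots, \gamma_{n-1} \rangle \) has \( \varrho_{\Tr_\alpha}(s) = \gamma_{n-1} \) (its last coordinate), while the root \( \emptyset \) has \( \varrho_{\Tr_\alpha}(\emptyset) = \alpha \); consequently \( \varrho(\Tr_\alpha) = \alpha + 1 \) and no node of \( \Tr_\alpha \) has rank exceeding \( \alpha \). Second, the cone \( \{ t \in \Tr_\alpha \mid s \subseteq t \} \) above any node \( s \) is isomorphic, via deletion of the prefix \( s \), to \( \Tr_{\varrho_{\Tr_\alpha}(s)} \). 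Throughout, ``embedding'' means an injective map preserving and reflecting the tree order (which then automatically preserves incomparability), and a routine induction on rank gives \( \varrho_\Tr(p) \le \varrho_{\Tr'}(f(p)) \) for any embedding \( f \colon \Tr \to \Tr' \).

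For the upper bound, let \( \Tr \) have root \( r \) of rank \( \beta \) (so \( \varrho(\Tr) = \beta + 1 \)). For each non-root node \( p \) fix, once and for all, an injection \( c \) of the set of children of its parent into \( \kappa \), and set \( v(p) = \kappa \cdot \varrho_\Tr(p) + c(p) < \kappa \cdot (\varrho_\Tr(p) + 1) \). I would then map \( r \) to \( \emptyset \) and each \( p \), with branch \( r = p_0 < \dots < p_n = p \), to \( \langle v(p_1), \dots, v(p_n) \rangle \). Strict decrease of this sequence is automatic: since the ranks strictly decrease along branches, \( \kappa \cdot \varrho_\Tr(p_i) \ge \kappa \cdot (\varrho_\Tr(p_{i+1}) + 1) > v(p_{i+1}) \). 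As \( \varrho_\Tr(p_i) < \beta \) for \( i \ge 1 \), each entry satisfies \( v(p_i) < \kappa \cdot \beta \), so the image lands in \( \Tr_{\kappa \cdot \beta} \); the extra factor \( \kappa \) is exactly what accommodates the (up to \( \kappa \)) many children of a node, which a bare rank-coding into \( \Tr_{\beta + 1} \) could not. Injectivity, and the fact that \( f \) is an order embedding, follow because ordinal division recovers \( \varrho_\Tr(p_i) \) and \( c(p_i) \) from \( v(p_i) \), hence reconstructs the whole branch.

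For the sharpness clause I would build trees \( U_\beta \) by recursion on \( \beta < \kappa^+ \) and prove the strengthened statement \( H(\beta) \): \( U_\beta \) has size \( \le \kappa \) and rank \( \beta + 1 \), and every embedding \( f \) of \( U_\beta \) into any \( \Tr_\alpha \) satisfies \( \varrho_{\Tr_\alpha}(f(\text{root})) \ge \kappa \cdot \beta \). Since no node of \( \Tr_\alpha \) has rank \( > \alpha \), \( H(\beta) \) immediately yields the desired non-embeddability into \( \Tr_\alpha \) for \( \alpha < \kappa \cdot \beta \). Take \( U_0 \) a single node; at a limit \( \beta \) let the root carry, for each \( \gamma < \beta \), a copy of \( U_\gamma \), and at a successor \( \gamma + 1 \) let the root carry \( \kappa \) disjoint copies of \( U_\gamma \). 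In both cases a rank computation gives rank \( \beta + 1 \), and the size stays \( \le \kappa \) because \( |\beta| \le \kappa \). In the induction step, writing \( s = f(\text{root}) \) and \( \sigma = \varrho_{\Tr_\alpha}(s) \), the images of the copies' roots are proper extensions of \( s \) of rank \( \ge \kappa \cdot \gamma \) by the inductive hypothesis; at limits these ranks are all \( < \sigma \) while their supremum is \( \kappa \cdot \beta \), so \( \sigma \ge \kappa \cdot \beta \) at once.

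The real work is the successor step, which is where I expect the main obstacle to lie. There one has \( \kappa \) pairwise incomparable proper extensions \( a_\xi \) of \( s \), each of rank (i.e.\ last coordinate) \( \ge \eta := \kappa \cdot \gamma \), and one must deduce \( \sigma \ge \eta + \kappa = \kappa \cdot (\gamma + 1) \); the danger is that the required width ``leaks'' into depth, letting many incomparable nodes hide in the cone above a single small node. The key observation defusing this is that, writing \( a_\xi = s {}^\frown u_\xi \), each \( u_\xi \) is a \emph{strictly decreasing} sequence whose last entry is \( \ge \eta \), so \emph{all} its entries lie in the interval \( [\eta, \sigma) \). The \( u_\xi \) are pairwise distinct, and the number of strictly decreasing finite sequences from a set of size \( \mu \) equals \( \mu \) when \( \mu \) is infinite (and is finite otherwise); hence \( \kappa \) such sequences force \( |[\eta, \sigma)| \ge \kappa \), and since \( \kappa \) is a cardinal this gives \( \sigma \ge \eta + \kappa \), completing \( H(\gamma + 1) \) and with it the lemma.
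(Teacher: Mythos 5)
Your proof is correct, and in substance it coincides with the paper's: the same canonical trees \( \Tr_\alpha \), essentially the same counterexample trees (a root carrying \( \kappa \) copies of the previous tree at successors, and copies of smaller-rank trees at limits), and the same cardinality argument at the crux of the sharpness claim. The differences are organizational. For the upper bound the paper argues by induction on \( \beta \), gluing embeddings \( \psi_i \colon \Tr^i \to \Tr_{\kappa\cdot\gamma_i} \) of the subtrees above the root's children by prefixing the ordinal \( \kappa\cdot\gamma_i + i \); your map \( p \mapsto \langle v(p_1),\dots,v(p_n)\rangle \) with \( v(p)=\kappa\cdot\varrho_\Tr(p)+c(p) \) is exactly the closed form of that recursion, so the two constructions produce the same embedding. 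For sharpness the paper's induction proves bare non-embeddability, handles the successor case by noting that among the \( \kappa \) incomparable images of the root's children some must have last coordinate below \( \kappa\cdot\gamma \) ``by a cardinality argument,'' and at limits must append trees along a cofinal sequence of ranks; your strengthened invariant \( H(\beta) \) (the image of the root has rank \( \geq\kappa\cdot\beta \)) buys a cleaner limit step (a plain supremum over all \( \gamma<\beta \)) and forces you to make explicit the counting of strictly decreasing sequences with entries in \( [\eta,\sigma) \) --- which is precisely the step the paper leaves implicit. Both versions are sound; yours is, if anything, slightly more self-contained.
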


\begin{proof}
By induction on \( \beta < \kappa^+ \). If \( \beta = 0 \), then \( \Tr \) consists only of its root, and thus it is isomorphic to \( \Tr_{\kappa \cdot 0 } = \Tr_0  = \{ \emptyset \} \). So let us assume that \( \beta > 0 \) (it makes no difference whether $\beta$ is successor or limit). In this case, the $\leq\kappa$-many immediate successors $\{ p_i \mid i < I \}$, \( I \leq \kappa \), of the root \( r \) of \( \Tr \) are in turn roots of the trees $\Tr^i = \{ q \in \Tr \mid p_i \leq q \}$, which are necessarily of rank $\leq\beta$. By inductive hypothesis, there are embeddings $\psi_i \colon \Tr^i \to \Tr_{\kappa \cdot \gamma_i}$ for some $\gamma_i<\beta$ (where if \( \beta = \gamma+1 \) we may have \( \gamma_i = \gamma \) for all \( i < I \)). Then the function \( \psi \colon \Tr \to \Tr_{\kappa \cdot \beta} \) defined by letting \( \psi(r) = \emptyset \) and \( \psi(q) \) be the sequence consisting of \( \kappa \cdot \gamma_i + i \) followed by \(\psi_i(q) \), where \( i < I \) is the unique index for which \( q \in \Tr^i \), is clearly a well-defined embedding.

The second part of the statement is again proved by induction on  \( \beta < \kappa^+ \). The basic 
case  \( \beta = 0 \) is trivial. Now assume that \( \beta = \gamma+1 \), and let \( \Tr' \) be a tree of size 
\( \leq \kappa \) and rank \( \gamma+1  \) which does not embed into any \( \Tr_\alpha \) for 
\( \alpha < \kappa \cdot \gamma \). Let \( \Tr \) be obtained by appending \( \kappa \)-many copies of 
\( \Tr' \) to a common root \( r \), and let \( p_i \), \( i < \kappa \), be an enumeration of the immediate 
successors of \(  r \) in \( \Tr \). Notice that \( \Tr \) has size \( \kappa \) and rank 
\( \varrho(\Tr') + 1 = \beta+1 \). Towards a contradiction, let 
\( \alpha < \kappa \cdot \beta = \kappa \cdot \gamma + \kappa \) be such that there is an embedding 
\( f \) of \(  \Tr \) into \( \Tr_\alpha \). Each \( f(p_i) \) is a nonempty sequence with some last element \( \alpha_i < \alpha \), so that the cone of \( \Tr_\alpha \) above \( f(p_i) \) is isomorphic to \( \Tr_{\alpha_i} \). Since there are \( \kappa \)-many \( p_i \)'s and 
\( \alpha < \kappa \cdot \gamma + \kappa \), by a cardinality argument one can check that
there is \( \bar\imath \in \kappa \) such that 
\( \alpha_{\bar\imath} < \kappa \cdot \gamma \). But then  the restriction of \( f \) to the cone of \( \Tr \) above \( p_{\bar\imath} \), which is isomorphic to \( \Tr' \), would 
yield an embedding of \( \Tr' \) into \( \Tr_{\alpha_{\bar\imath}} \), contradicting the choice of \( \Tr' \). The limit case is similar, the only complication being that we cannot start from a single tree of rank $\beta$ (since the rank of a tree is always a successor ordinal). This is fixed by choosing a sequence \( (\beta_i)_{i < \mathrm{cof}(\beta)} \) cofinal in \( \beta \) and, for each \( i < \mathrm{cof}(\beta) \), a tree \( \Tr^i \) of size \( \leq \kappa\) and rank \( \beta_i +1 \) which cannot be embedded in any \( \Tr_\alpha \) for \( \alpha < \kappa \cdot \beta_i \), and then considering the tree obtained by appending all these \( \Tr^i \) to a common root. 
\end{proof}

\begin{remark} \label{rmk:trees}
The proof of the first part of Lemma~\ref{trees} actually yields that each tree \( \Tr \) of size \( \leq \kappa\) and rank \( \beta+1 < \kappa^+ \) can be embedded into \( \Tr_{\kappa\cdot \beta} \) in the following strong sense: \( \Tr \) is isomorphic to a subtree of \( \Tr_{\kappa \cdot \beta} \) \emph{closed under initial segments}.
\end{remark}

We now present the games which characterize the \( \kappa^+ \)-Borel subsets of \( X \subseteq \pre{\kappa}{\kappa} \).  Let \( \Tr \) be a well-founded tree of size \( \leq \kappa \), let \( \ell \) be a labeling function sending the leaves of \( \Tr \) to clopen subsets of \( X \), and let \( x \) be an element of \( X \). The game \( G(\Tr, \ell, x) \) is played by two players \( \1 \) and \( \2 \) on the tree \( \Tr \) as follows. Player \( \1 \) starts playing an immediate successor of the root  of \( \Tr \), and afterwards \( \1 \) and \( \2 \) take turns in picking an immediate successor in \( \Tr \) of the opponent's previous move. Since \( \Tr \) is well-founded, after a finite number of turns a leaf \( p \) will be selected, so that the game cannot continue from that point on: when this happens, we have that \( \2 \) won the run if and only if \( x \in \ell(p) \), otherwise \( \1 \) won. Winning strategies for \( \1 \) and \( \2 \) are defined as usual, and we write \( \2 \wins G(\Tr, \ell, x) \) if player \( \2 \) has a winning strategy in such game. 

\begin{remark}
The set of all possible runs in \( G(\Tr,\ell,x) \) only depends on \( \Tr \), while \( \ell \) and \( x \) are involved only in the definition of the winning condition.
\end{remark}

A pair consisting of a well-founded tree \( \Tr \) of size \( \leq \kappa \) and a labeling function \( \ell \) as above will be called a \emph{\( \kappa^+ \)-Borel code}. Given such a code \( (\Tr,\ell) \), we let
\[ 
B(\Tr,\ell) = \{ x \in X \mid \2\wins G(\Tr, \ell, x) \}
 \] 
be the set \emph{coded by \( (\Tr, \ell) \)}.	

It is well known that a set \( A \subseteq X \) is \( \kappa^+ \)-Borel if and only if there is a \( \kappa^+ \)-Borel code \( (\Tr, \ell ) \) for it. In the next result we sharpen this by relating the Borel rank \( \mathrm{rk}_B(A) \) of \( A\) to the rank \( \varrho(\Tr) \) of \( \Tr \).  

\begin{theorem}\label{orig1}
Let \( \kappa \) be an infinite cardinal,  \( X \subseteq \pre{\kappa}{\kappa} \), and   $ \alpha<\kappa^+$.  Given a set \( A \subseteq X \), we have that $ A \in \boldsymbol{\Pi}_\alpha^0(X)$ if and only if $A = B(\Tr,\ell)$ for some \( \kappa^+ \)-Borel code \( (\Tr, \ell) \) with $\varrho(\Tr)\leq\alpha+1$.
\end{theorem}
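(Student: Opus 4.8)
The plan is to prove a two-sided strengthening by simultaneous transfinite induction on \( \alpha < \kappa^+ \). Alongside the statement (call it \( (\mathrm P_\alpha) \)) I would prove its dual \( (\mathrm S_\alpha) \): \( A \in \boldsymbol{\Sigma}^0_\alpha(X) \) iff \( A = B^{\2}(\Tr,\ell) \) for some code with \( \varrho(\Tr) \leq \alpha+1 \), where \( G^{\2}(\Tr,\ell,x) \) denotes the variant of the game \( G \) in which it is player \( \2 \) who moves first, and \( B^{\2}(\Tr,\ell) = \{ x \mid \2 \wins G^{\2}(\Tr,\ell,x) \} \) (while \( B = B^{\1} \) is the original one). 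The engine of the induction is a pair of recursive game identities: writing \( \{ p_i \mid i < I \} \) (\( I \leq \kappa \)) for the immediate successors of the root \( r \) of \( \Tr \) and \( \Tr^i = \{ q \in \Tr \mid p_i \leq q \} \) for the corresponding cones (so \( \varrho_{\Tr^i}(p_i) < \varrho_\Tr(r) \)), after \( \1 \)'s opening move the remaining play is a \( \2 \)-first game on some \( \Tr^i \), and symmetrically, whence
\[
B^{\1}(\Tr,\ell) = \bigcap_{i<I} B^{\2}(\Tr^i,\ell), \qquad B^{\2}(\Tr,\ell) = \bigcup_{i<I} B^{\1}(\Tr^i,\ell)
\]
whenever \( r \) is not a leaf, while \( B^{\1}(\Tr,\ell) = B^{\2}(\Tr,\ell) = \ell(r) \) when \( r \) is a leaf.

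For the base case \( \alpha = 0 \) the only admissible trees are single nodes (rank \( 1 \)), for which both games end immediately and code exactly the clopen sets \( \boldsymbol{\Delta}^0_1(X) = \boldsymbol{\Pi}^0_0(X) = \boldsymbol{\Sigma}^0_0(X) \). The genuinely delicate point, which I expect to be the main obstacle, sits just above this, at the identification of \( \boldsymbol{\Pi}^0_1(X) \) (closed sets) with \( \kappa \)-intersections of clopen sets: for \( \kappa^{<\kappa} > \kappa \) the space has weight \( > \kappa \), so one cannot simply enumerate a basis. The trick is to represent a closed \( C \subseteq X \) as the body \( C = \{ x \mid \forall \beta < \kappa \ (x \restriction \beta \in T) \} \) of its tree of initial segments \( T \), and to observe that each \( D_\beta = \{ x \mid x \restriction \beta \in T \} \) is clopen (both it and its complement are unions of basic neighbourhoods), so that \( C = \bigcap_{\beta < \kappa} D_\beta \) is a \( \kappa \)-intersection of clopen sets; dually, every open set is a \( \kappa \)-union of clopen sets. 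This is exactly what a rank-\( 2 \) tree codes, giving \( (\mathrm P_1) \) and \( (\mathrm S_1) \) for \emph{every} infinite \( \kappa \), with no cardinal-arithmetic hypothesis.

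For the inductive step at \( \alpha \geq 1 \) I would prove \( (\mathrm S_\alpha) \) from the hypotheses \( (\mathrm P_\beta) \), \( \beta < \alpha \), and then deduce \( (\mathrm P_\alpha) \) by duality. For the forward direction of \( (\mathrm S_\alpha) \), given \( A = \bigcup_{\gamma<\kappa} A_\gamma \) with \( A_\gamma \in \boldsymbol{\Pi}^0_{\beta_\gamma} \), \( \beta_\gamma < \alpha \), I use \( (\mathrm P_{\beta_\gamma}) \) to get codes \( (\Tr_\gamma, \ell_\gamma) \) with \( \varrho(\Tr_\gamma) \leq \beta_\gamma + 1 \) and \( A_\gamma = B^{\1}(\Tr_\gamma,\ell_\gamma) \), then glue the \( \Tr_\gamma \) below a fresh root (each \( \Tr_\gamma \) becoming the cone above an immediate successor \( r_\gamma \) of the new root); the second game identity gives \( B^{\2} = \bigcup_\gamma A_\gamma = A \), and the rank of the new root is \( \sup_\gamma(\varrho_{\Tr_\gamma}(r_\gamma) + 1) \leq \sup_\gamma(\beta_\gamma + 1) \leq \alpha \), valid for \( \alpha \) successor or limit alike, while \( |\Tr| \leq \kappa \cdot \kappa = \kappa \) keeps it a legitimate code. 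The reverse direction reads the same identity backwards: each cone \( \Tr^i \) has \( \varrho_{\Tr^i}(p_i) = \beta_i < \alpha \), so \( B^{\1}(\Tr^i,\ell) \in \boldsymbol{\Pi}^0_{\beta_i} \) by \( (\mathrm P_{\beta_i}) \) (reading clopen as \( \boldsymbol{\Pi}^0_1 \) when \( \beta_i = 0 \), using \( \alpha \geq 2 \); the case \( \alpha = 1 \) falls back on open \( = \) \( \kappa \)-union of clopen), and the \( \kappa \)-union lands in \( \boldsymbol{\Sigma}^0_\alpha \).

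Finally, to pass from \( (\mathrm S_\alpha) \) to \( (\mathrm P_\alpha) \) I would use the complementation duality \( X \setminus B^{\1}(\Tr,\ell) = B^{\2}(\Tr,\bar\ell) \), where \( \bar\ell(p) = X \setminus \ell(p) \) is again a clopen labeling of the same tree. This rests on the determinacy of \( G(\Tr,\ell,x) \) — immediate since \( \Tr \) is well-founded, so every run is finite and one argues by induction on \( \varrho_\Tr \) — which turns ``\( \2 \) does not win'' into ``\( \1 \) wins'', and \( \1 \) winning the \( \1 \)-first game for \( \ell \) is literally \( \2 \) winning the \( \2 \)-first game for \( \bar\ell \). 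Since complementation sends \( \boldsymbol{\Sigma}^0_\alpha \) to \( \boldsymbol{\Pi}^0_\alpha \) and leaves the tree (hence its rank) untouched, \( (\mathrm P_\alpha) \) drops out of \( (\mathrm S_\alpha) \), closing the induction. Beyond the base-case subtlety already noted, the only thing to watch is the rank arithmetic: one must keep track throughout that \( \varrho(\Tr) = \varrho_\Tr(r) + 1 \), so that the bound \( \varrho(\Tr) \leq \alpha+1 \) in the statement corresponds exactly to \( \varrho_\Tr(r) \leq \alpha \) in the recursion.
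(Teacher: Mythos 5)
Your proof is correct, but it is organized quite differently from the paper's. The paper runs a one-sided induction entirely within the $\boldsymbol{\Pi}^0_\alpha$ classes and peels \emph{two} levels of the tree at each step (the root's successors $p_i$ for $\1$'s move and their successors $p_{i,j}$ for $\2$'s response), so that a single stage of the induction realizes the full $\bigcap_{i}\bigcup_{j}$ pattern defining $\boldsymbol{\Pi}^0_\alpha$; it never needs a dual game, complementation of codes, or determinacy. You instead run a simultaneous induction on the pair $(\mathrm P_\alpha),(\mathrm S_\alpha)$, peel one level at a time via the identities $B^{\1}(\Tr,\ell)=\bigcap_i B^{\2}(\Tr^i,\ell)$ and $B^{\2}(\Tr,\ell)=\bigcup_i B^{\1}(\Tr^i,\ell)$, and cross between the two pointclasses using the determinacy of the (finite-length) game together with the label-complementation identity $X\setminus B^{\1}(\Tr,\ell)=B^{\2}(\Tr,\bar\ell)$. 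What your route buys is symmetry and a genuinely sharper $\boldsymbol{\Sigma}$-side statement: you get codes of rank $\leq\alpha+1$ for $\boldsymbol{\Sigma}^0_\alpha$ sets in the $\2$-first game, whereas the paper only extracts $\boldsymbol{\Sigma}^0_\alpha$ codes of rank $\alpha+2$ as an afterthought via $\boldsymbol{\Sigma}^0_\alpha\subseteq\boldsymbol{\Pi}^0_{\alpha+1}$. What it costs is the extra machinery (the dual game and the determinacy lemma), which the paper's two-level decomposition avoids entirely. You also deserve credit for making explicit a point the paper uses silently in its $\alpha=1$ case, namely that every closed subset of $X\subseteq\pre{\kappa}{\kappa}$ is a $\kappa$-intersection of (relatively) clopen sets for \emph{every} infinite $\kappa$, via the tree of initial segments and the clopen sets $D_\beta=\{x\mid x\restriction\beta\in T\}$; this is exactly the right argument and it is needed, since without $\kappa^{<\kappa}=\kappa$ one cannot just enumerate a $\kappa$-sized basis. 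The only small bookkeeping items to keep straight in a write-up are the leaf conventions (a single-node cone codes $\ell$ of that node in both the $\1$-first and $\2$-first games) and the case $\beta_i=0$ in the reverse direction of $(\mathrm S_\alpha)$, both of which you already flag.
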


Notice that since $\boldsymbol{\Sigma}_\alpha^0(X) \subseteq\boldsymbol{\Pi}_{\alpha+1}^0(X)$ for every $ \alpha<\kappa^+$, this also gives us a \( \kappa^+ \)-Borel code \( (\Tr, \ell) \) of any given set  $B \in \boldsymbol{\Sigma}_\alpha^0(X) \setminus\boldsymbol{\Delta}_\alpha^0(X)$ with $\varrho(\Tr) = \alpha+2$, while there cannot be a \( \kappa^+ \)-Borel code for \( B \) whose tree has rank \( < \alpha + 2 \) (unless \( \alpha = 0 \)).

\begin{proof}
First, we are going to show that every Borel set $B\in\boldsymbol{\Pi}_\alpha^0(X)$ is coded by some \( (\Tr, \ell) \) with \( \varrho(\Tr) \leq\alpha+1\).  We work by induction on $\alpha< \kappa^+$.
Assume first \( \alpha = 0 \), i.e.\ \( B \in \boldsymbol{\Pi}^0_0(X) = \boldsymbol{\Delta}^0_1(X) \): then \( B = B(\Tr,\ell) \) where \( \Tr \) consists just of its root (so that \( \varrho(\Tr) = 1 \)) and \( \ell(r) = B \). Assume now \( \alpha = 1 \). Then $B\in\boldsymbol{\Pi}_1^0(X)$, so that $B=\bigcap_{i<\kappa} B_i$ with $B_i$ clopen. Let $\Tr$ be the tree consisting of a root \( r \) together with \( \kappa \)-many immediate successors \( p_i \) (\( i<\kappa \)) of it, and let $\ell$ be the labeling function defined by $\ell(p_i) =  B_i$. Then \( \varrho(\Tr) = 2 \) and $B=B(\Tr, \ell)$.
Finally, let \( \alpha > 1 \). We have that $B=\bigcap_{i<\kappa} B_i$ for some $B_i\in\boldsymbol\Sigma_{\alpha_{i}}^0(X)$ with $1 \leq \alpha_i<\alpha$, and in turn $B_i=\bigcup_{j<\kappa}B_{i,j}$ for  some $ B_{i,j}\in\boldsymbol\Pi_{\alpha_{i,j}}^0(X)$ with $ \alpha_{i,j} < \alpha_i \). By the inductive hypothesis, \( B_{i,j} = B(\Tr_{i,j}, \ell_{i,j}) \) with \( \varrho(\Tr_{i,j}) \leq \alpha_{i,j}+1 \). Let us consider the tree $\Tr$ obtained by appending to each of the \( \kappa \)-many distinct successors \( p_i \) of its root \( r \) the trees \( \Tr_{i,j} \) (that is, the root \( p_{i,j} \) of each $\Tr_{i,j}$ is a distinct immediate successor of $p_i$). Notice that by construction
\( \varrho_{\Tr}(p_{i,j}) = \varrho_{\Tr_{i,j}}(p_{i,j}) \), whence \( \varrho_{\Tr}(p_{i,j}) \leq \alpha_{i,j} \) because \( \varrho_{\Tr_{i,j}}(p_{i,j})+ 1 = \varrho(\Tr_{i,j}) \leq \alpha_{i,j}+1 \). 
 By definition of rank, 
\[ 
\varrho_\Tr(p_{i})= \sup \{ \varrho_{\Tr}(p_{i,j})+1 \mid j < \kappa \} \leq \sup \{ \alpha_{i,j}+1 \mid j < \kappa \} \leq \alpha_i,
\] 
whence $\varrho_\Tr(r) = \sup \{ \varrho_\Tr(p_i)+1 \mid i < \kappa \} \leq \sup \{ \alpha_i+1 \mid i < \kappa \} \leq \alpha\) and \(\varrho(\Tr) \leq \alpha+1\). Define now a labeling function $\ell$ on the leaves of \( \Tr \) as follows. By construction, for each leaf \( p \) of \( \Tr \) there is a unique pair \( i,j \) of ordinals \( < \kappa \) such that \( p_{i,j} \leq p \): set \( \ell(p) = \ell_{i,j}(p) \). We claim that \( B = B(\Tr,\ell) \). In fact, consider a run in \( G(\Tr,\ell,x) \). In the first turn \( \1 \) will pick some \( p_{\bar\imath} \), and \( \2 \) will respond by picking some \( p_{\bar\imath,\bar\jmath} \). After these two moves, the rest of the run will be equivalent to a run in the game $G(\Tr_{\bar\imath,\bar\jmath},\ell_{\bar\imath,\bar\jmath},x)$. Thus we have $\2\wins G(\Tr,\ell,x)$ if and only if  for every $i<\kappa$ there is $j<\kappa$ such that $\2\wins G(\Tr_{ij},\ell_{ij},x)$, whence 
\[
B(\Tr,\ell) = \bigcap_{i < \kappa} \bigcup_{j < \kappa} B(\Tr_{i,j}, \ell_{i,j}) = \bigcap_{i < \kappa} \bigcup_{j < \kappa} B_{i,j} = \bigcap_{i< \kappa} B_i = B.
\] 

Conversely, we now prove that if $B = B(\Tr,\ell)$  with  $\varrho(\Tr) \leq\alpha+1$,  then \mbox{$B \in \boldsymbol{\Pi}_\alpha^0(X)$.} The proof is again by induction on $\alpha < \kappa^+$. 
If \(\alpha = 0 \), i.e.\ \( \varrho(\Tr) = 1 \), then \( \Tr \) consists only of its root \( r \) and 
\[
B= B(\Tr,\ell) = \ell(r) \in \boldsymbol{\Delta}^0_1(X) = \boldsymbol{\Pi}^0_0(X). 
\]
Assume now \( \alpha = 1 \).  Since we already dealt with the case \( \alpha = 0 \), we may assume \( \varrho(\Tr) = 2 \). Let $\{p_i\mid i<I\}$, for a suitable \( I \leq \kappa \), be the set of immediate successors
 of the root $r$ of \( \Tr \), so that \( \Tr \) contains no other nodes.
Then 
\[
B=B(\Tr,\ell) = \bigcap_{i<I}\ell(p_i) \in \boldsymbol{\Pi}^0_1(X).
\]

Finally, let \( \alpha > 1 \). We may assume $\varrho(\Tr) \geq 3$. Let $\{p_i\mid i<I\}$ be the set of immediate successors of the root $r$, and, for each \( i < I \), let $\{ p_{i,j} \mid j < J_i \}$ be the set of immediate successors of $p_i$ in \( \Tr \) (for suitable%
\footnote{Since we assumed $\varrho(\Tr) \geq 3$, we have that \( I > 0 \), while possibly \( J_i = 0 \) for some, but not all, \( i < I \).}
 \( I, J_i \leq \kappa \)). Finally, let \( \Tr_{i,j} \) be the subtree of \( \Tr \) with domain \( \{ p \in \Tr \mid p_{i,j} \leq p \} \), and let \( \ell_{i,j} \) be defined on the leaves \( p \) of \( \Tr_{i,j} \) by setting \( \ell_{i,j}(p) = \ell(p) \) (notice that \( p \) is a leaf of \( \Tr_{i,j} \) if and only if \( p \) is a leaf of \( \Tr \) and \( p \in \Tr_{i,j} \)). By construction, \( B = B(\Tr,\ell) = \bigcap_{i < I} \bigcup_{j < J_i} B(\Tr_{i,j},\ell_{i,j} ) \). Moreover, since \( \varrho(\Tr) \leq \alpha +1 \), we get \( \varrho_\Tr(r) \leq \alpha \), and by definition of rank \( \varrho_\Tr(p_{i,j}) < \varrho_\Tr(p_i)	< \varrho_\Tr(r) \) for all relevant \( i,j \). It follows that
 \[ 
\varrho(\Tr_{i,j}) \leq \varrho_{\Tr}(p_i) < \alpha.
 \] 
By inductive hypothesis, this implies that
\[ 
B(\Tr_{i,j},\ell_{i,j}) \in \bigcup_{\beta < \varrho_\Tr(p_i)} \boldsymbol{\Pi}^0_\beta(X),
 \] 
whence 
\[ 
\bigcup_{j < J_i} B(\Tr_{i,j},\ell_{i,j}) \in \boldsymbol{\Sigma}^0_{\varrho_\Tr(p_i)}(X) \subseteq \bigcup_{\beta < \alpha} \boldsymbol{\Sigma}^0_\beta(X)
\]
for all \( i < I \), which in turn implies
\[ 
\bigcap_{i < I} \bigcup_{j < J_i} B(\Tr_{i,j},\ell_{i,j} ) \in \boldsymbol{\Pi}^0_\alpha(X),
 \] 
 as desired.
\end{proof}

\begin{remark}
It is clear from the proof above that we still obtain \( \kappa^+ \)-Borel sets if we modify the definition of \( \kappa^+ \)-Borel codes by allowing the labeling function to take arbitrary \( \kappa^+ \)-Borel sets as values. However, the Borel rank of the coded set would in this case depend on the Borel ranks of the sets used as labels.
\end{remark}

We also notice that one can code all \( \kappa^+ \)-Borel sets by using only the canonical well-founded trees \( \Tr_\alpha \) to form codes: in a sense this  shows that the relevant information in a \( \kappa^+ \)-Borel code actually relies on the labeling function together with the rank of the tree, but not on the specific tree itself.

\begin{cor}\label{improve}
	Every $\kappa^+$-Borel set admits a $\kappa^+$-Borel code of the form $(\Tr_\alpha, \ell)$ for some $\alpha<\kappa^+$. More precisely, if $B\in\boldsymbol{\Pi}_\alpha^0(X)$, then $B = B(\Tr_{\kappa \cdot \alpha},\ell)$ for some labeling function \( \ell \).
\end{cor}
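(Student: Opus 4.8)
The plan is to combine Theorem~\ref{orig1} with the embedding of Lemma~\ref{trees} (in the strong form of Remark~\ref{rmk:trees}), and then to absorb the ``extra'' nodes of \( \Tr_{\kappa \cdot \alpha} \) into a padding of the labeling function that is invisible to the associated game. First I would fix \( B \in \boldsymbol{\Pi}^0_\alpha(X) \) and apply Theorem~\ref{orig1} to obtain a code \( (\Tr, \ell) \) with \( \varrho(\Tr) \leq \alpha+1 \) and \( B = B(\Tr, \ell) \); writing \( \varrho(\Tr) = \beta+1 \) we have \( \beta \leq \alpha \). By Lemma~\ref{trees} and Remark~\ref{rmk:trees}, \( \Tr \) is isomorphic to a subtree of \( \Tr_{\kappa \cdot \beta} \) closed under initial segments, and since \( \Tr_{\kappa \cdot \beta} \subseteq \Tr_{\kappa \cdot \alpha} \) is itself an initial-segment-closed subtree, transporting \( \ell \) along the isomorphism lets me assume that \( \Tr' \subseteq \mathbb{S} := \Tr_{\kappa \cdot \alpha} \) is an initial-segment-closed subtree with \( B(\Tr', \ell) = B \). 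It then suffices to extend \( \ell \) to a labeling \( \ell' \) of the leaves of \( \mathbb{S} \) with \( B(\mathbb{S}, \ell') = B(\Tr', \ell) \).

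The heart of the matter, and the step I expect to be the main obstacle, is exactly this extension: a leaf of \( \Tr' \) is in general \emph{not} a leaf of \( \mathbb{S} \), so the game on \( \mathbb{S} \) must be coerced into simulating the game on \( \Tr' \). For a leaf \( q \) of \( \mathbb{S} \) let \( p \) be the longest initial segment of \( q \) lying in \( \Tr' \). I would define \( \ell' \) by cases: if \( q \in \Tr' \) (so \( q \) is a leaf of both trees) set \( \ell'(q) = \ell(q) \); if \( p \) is a leaf of \( \Tr' \) but \( q \notin \Tr' \), set \( \ell'(q) = \ell(p) \); and if \( p \) is not a leaf of \( \Tr' \), set \( \ell'(q) = X \) when \( p \) has even height and \( \ell'(q) = \emptyset \) when \( p \) has odd height. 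The point of the three clauses is the following. Above a leaf \( p \) of \( \Tr' \) all leaves of \( \mathbb{S} \) receive the constant label \( \ell(p) \), so that the continuation of the play past \( p \) has the determined outcome ``\( \2 \) wins if and only if \( x \in \ell(p) \)'', reproducing exactly the winning condition attached to the leaf \( p \) of \( \Tr' \). Past any move that exits \( \Tr' \) at a \emph{non}-leaf node \( p \), the constant label is chosen so that exiting is a losing move for whoever is to move at \( p \): recall that \( \1 \) moves at even heights and \( \2 \) at odd heights, so labeling the region above the exit with \( X \) (resp.\ \( \emptyset \)) makes \( \2 \) (resp.\ \( \1 \)) win there regardless of play, penalizing exactly the exiting player.

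Finally I would verify \( B(\mathbb{S}, \ell') = B(\Tr', \ell) \) by a straightforward induction on \( \varrho_{\Tr'}(p) \), proving that for every \( p \in \Tr' \) and every fixed \( x \) the game on \( \mathbb{S} \) started from position \( p \) and the game on \( \Tr' \) started from \( p \) have the same winner. The base case \( \varrho_{\Tr'}(p) = 0 \) is precisely the ``constant outcome above a leaf'' observation of the previous paragraph. In the inductive step the player to move at \( p \) never gains by exiting \( \Tr' \)---that move is losing by the choice of \( \ell' \)---so the winner is determined by the moves that stay inside \( \Tr' \), to which the inductive hypothesis applies; this is the same ``exiting is dominated'' argument as in the proof of Theorem~\ref{orig1}. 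Taking \( p \) to be the root yields \( B(\mathbb{S}, \ell') = B(\Tr', \ell) = B \), which gives the ``more precisely'' clause, and the first assertion follows at once since every \( \kappa^+ \)-Borel set lies in some \( \boldsymbol{\Pi}^0_\alpha(X) \). The degenerate case \( \alpha = 0 \) needs no argument, as \( \Tr_{\kappa \cdot 0} = \Tr_0 \) already codes every clopen set.
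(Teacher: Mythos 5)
Your proposal is correct and follows essentially the same route as the paper: apply Theorem~\ref{orig1}, embed the resulting tree as an initial-segment-closed subtree of \( \Tr_{\kappa\cdot\alpha} \) via Lemma~\ref{trees} and Remark~\ref{rmk:trees}, and pad the labeling with \( \ell(p) \) above leaves of the subtree and with \( X \) or \( \emptyset \) (according to the parity of the exit node, penalizing whichever player leaves the subtree first) elsewhere. Your parity convention agrees with the paper's, and your induction on \( \varrho_{\Tr'}(p) \) is just a formalized version of the paper's direct strategy-transfer argument.
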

\begin{proof}
	By Theorem~\ref{orig1} there is a Borel code \( (\Tr', \ell') \) for \( B \) with \(\varrho(\Tr')\leq\alpha+1 \). By Lemma~\ref{trees} and Remark~\ref{rmk:trees}, we may assume without loss of generality that \( \Tr' \) is a subtree of \(  \Tr_{\kappa\cdot \alpha} \) closed under initial segments. 
Let \( \ell \) be the labeling function defined on the leaves \( p \) of \( \Tr_{\kappa \cdot \alpha} \) as follows. Let \( p' \) be the largest node such that \( p' \in \Tr' \) and \( p' \leq p \). We distinguish three cases:
	\begin{enumerate-(1)}
		\item \label{sameleaf} if \( p' \) is a leaf of \( \Tr' \), then set \( \ell(p) = \ell'(p') \);

		\item \label{emptyleaf} if \(  \pred_{\Tr_{\kappa \cdot \alpha}}(p') = \pred_{\Tr'}(p') \) has an odd number of elements, then set \( \ell(p) =  \emptyset \);

		\item \label{fullleaf} if \(  \pred_{\Tr_{\kappa \cdot \alpha}}(p') = \pred_{\Tr'}(p') \) has an even number of elements, then set \mbox{\( \ell(p) =  X \).}
	\end{enumerate-(1)}
	We claim that \( B(\Tr_{\kappa \cdot \alpha}, \ell) = B(\Tr',\ell') \), whence \( B = B(\Tr_{\kappa \cdot \alpha}, \ell) \).  
	
	Indeed, let \( x \in X \). If \( \2 \wins G(\Tr_{\kappa \cdot \alpha}, \ell, x) \), then by~\ref{emptyleaf} his winning strategy never involves playing a node outside \( \Tr' \) unless either \( \1 \) already did or in a  previous turn a leaf \( p' \) of \( \Tr' \) was reached, in which case any leaf \( p \) that will be reached at the end of the run will be such that \( \ell(p) = \ell'(p') \) by~\ref{sameleaf}. Thus the restriction of any winning strategy of \( \2 \) to \( \Tr' \) actually witnesses \( \2 \wins G(\Tr', \ell', x ) \), since obviously in the restricted game \( \1 \) always plays inside \( \Tr' \). 
	
	Conversely, a winning strategy for \( \2 \) in \( G(\Tr',\ell',x) \) can be converted into a winning strategy for \( \2 \) in \( G(\Tr_{\kappa \cdot \alpha}, \ell ,x) \) as follows. As long as \( \1 \) is playing nodes in \( \Tr' \) which are not leaves of \( \Tr' \), player \( \2 \) follows his strategy in \( G(\Tr',\ell',x) \) (notice that in this case the node played by \( \2 \) will be in \( \Tr' \) as well). If \( \1 \) plays for the first time a node outside \( \Tr' \), then \( \2 \) can make random moves from that point on because by~\ref{fullleaf} any leaf \( p \) of \( \Tr_{\kappa \cdot \alpha} \) that will be reached will satisfy \( \ell(p) = X \), whence \( x \in \ell(p) \) trivially. In the remaining case, i.e.\ when a leaf \( p' \) of \( \Tr' \) has been reached by either \( \1 \) or \( \2 \), player \( \2 \) can again make random moves from that point on because by~\ref{sameleaf} any leaf \( p \) of \( \Tr_{\kappa \cdot \alpha} \) that will be reached at the end of the run will be such that \( \ell(p) = \ell'(p') \), whence \( x \in \ell(p) \) because \( p' \) was reached following the winning strategy of \( \2 \) in \( G(\Tr',\ell',x) \).
\end{proof}

\begin{remark}
Corollary~\ref{improve} allows us to reformulate the games coding \( \kappa^+ \)-Borel sets as follows. Given \( \alpha < \kappa^+ \), a labeling function \( \ell \colon \Tr_\alpha \to \boldsymbol{\Delta}^0_1(X) \), and a point \( x \in X \), the game \( G_\alpha(\ell,x) \) is played as follows. Player \( \1 \) start by choosing some ordinal \( \alpha_0 < \alpha \) and player \( \2 \) responds with some \( \alpha_1 < \alpha_0 \). Then \( \1 \) chooses some \( \alpha_2 < \alpha_1 \) while \( \2 \) chooses \( \alpha_3 < \alpha_2 \). They continue in this way until \( 0 \) is reached, at which point we say that \( \2 \) wins if and only if \( x \in \ell(\langle \alpha_0, \alpha_1, \dotsc, 0 \rangle) \). It turns out from what we proved above that \( A \subseteq X \) is \( \kappa^+ \)-Borel if and only if there are \( \alpha <  \kappa^+ \) and \( \ell \colon \Tr_\alpha \to \boldsymbol{\Delta}^0_1(X) \) such that \( A \) is the set of those \( x \in X \) for which \( \2 \) has a winning strategy in \( G_\alpha(\ell,x) \).
\end{remark}

\subsection{Codes for \( \kappa \)-sized structures} \label{subsec:codes}

Our use of generalized descriptive set theory is mainly concerned with spaces of codes for first-order structures of size \( \kappa \). For the sake of simplicity we will consider only  finitary relational%
\footnote{This is not a true limitation, as functions can be dealt with through their graphs, and constants can be construed as \( 0 \)-ary functions.} 
languages \( \L = \{ R_i \mid i < I \} \), where \( I \leq \kappa \) and \( R_i \) is a relation symbol of arity \( n_i \). Up to isomorphism, we can assume without loss of generality that every \( \kappa \)-sized \mbox{\( \L \)-structure} has domain \( \kappa \), hence it can be coded through the characteristic functions of its predicates. Therefore we can regard
\[ 
\Mod^\kappa_\L = \prod_{i \leq I} \pre{\left(\pre{n_i}{\kappa}\right)}{2}
 \] 
as the space of (codes of) all \( \kappa \)-sized \( \L \)-structures. It is natural to equip this space with the \emph{logic topology}, i.e.\ with the topology generated by the sets
\begin{equation} \label{eq:basicofmod}
\Nbhd_\Q=\left\{\M\in\Mod^\kappa_\L \mid  \Q \text{ is a substructure of the \( \L' \)-reduct of } \M \right\},
\end{equation}
with
 \( \L' \) varying over subsets of \( \L \) of size \( < \kappa \), and \( \Q \) varying over the \( \L' \)-structures with  domain bounded%
\footnote{In particular, such a \( \Q \) has size \( < \kappa \). When \( \kappa \) is regular, these two conditions become equivalent: the domain of \( \Q \) is bounded in \( \kappa \) if and only if it has size \( < \kappa \).}
 in \( \kappa \). Clearly, if \( \L \) already has size \( < \kappa \) we can avoid any reference to \( \L' \) and just let \( \Q \) vary over \( \L \)-structures with domain bounded in \( \kappa \). It is an easy exercise to show that \( \Mod^\kappa_\L \) is homeomorphic to \( \pre{\kappa}{2} \), so that we can speak of (\( \kappa^+ \)-)Borel subsets of \( \Mod^\kappa_\L \) and of their Borel ranks. Given some theory $T$, we denote by $\mathrm{Mod}^\kappa_T$ the space of $\kappa$-sized models of $T$ with the subspace topology induced by $\mathrm{Mod}^\kappa_\L$.

Recall that the infinitary logic \( \L_{\kappa^+ \kappa} \) is the extension of the usual first-order logic obtained by allowing conjunctions and disjunctions of length \( \leq \kappa \) and (simultaneous) quantifications over sequences of variables of length \( < \kappa \), while \( \L_{\infty \kappa} \) is the further extension of \( \L_{\kappa^+ \kappa} \) in which we also allow conjunctions and disjunctions of arbitrary (set-)size. 
For \( \sigma \) an \( \L_{\kappa^+ \kappa} \)-sentence, we set
\[ 
\Mod^\kappa_\upsigma = \{ \M \in \Mod^\kappa_\L \mid \M \models \upsigma \},
 \] 
 and we say that a set \( A \subseteq \Mod^\kappa_\L \) is \emph{axiomatized} by \( \upsigma \) if \( A = \Mod^\kappa_\sigma \).
Arguing as in the classical case \( \kappa = \omega \), when \( \kappa^{< \kappa} = \kappa \) there is a tight relation between the \( \kappa^+ \)-Borel subsets of \( \Mod^\kappa_\L \) closed under isomorphism and the subsets of \( \Mod^\kappa_\L \) that can be axiomatized within the logic \( \L_{\kappa^+ \kappa} \). 

\begin{theorem}\label{orig2}
Let \( \kappa \) be such that $\kappa^{<\kappa}=\kappa$. For every \( A \subseteq \Mod^\kappa_\L \) the following are equivalent:
\begin{enumerate-(i)}
\item
\( A \in \mathbf{Bor}(\Mod^\kappa_\L) \) and is \emph{closed under isomorphism} (i.e.\ \( \M \in A \) and \( \M \cong \N \) implies \( \N \in A \));
\item
\( A  \) is axiomatized by some \( \L_{\kappa^+ \kappa} \)-sentence \( \upsigma \).
\end{enumerate-(i)}
\end{theorem}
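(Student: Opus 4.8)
The plan is to prove the two implications separately; (ii)$\Rightarrow$(i) is routine, while (i)$\Rightarrow$(ii) is the generalized López--Escobar direction and carries all the weight.

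\emph{Direction} (ii)$\Rightarrow$(i). Any $A$ axiomatized by an $\L_{\kappa^+\kappa}$-sentence is automatically closed under isomorphism, since satisfaction of a sentence is an isomorphism invariant. To see that such an $A$ is $\kappa^+$-Borel I would prove the stronger statement, by induction on the syntactic complexity of $\L_{\kappa^+\kappa}$-formulas $\varphi(\bar x)$ with $<\kappa$ free variables, that for every assignment $\bar b$ of the free variables to elements of $\kappa$ the set $\{\M\in\Mod^\kappa_\L : \M\models\varphi[\bar b]\}$ is $\kappa^+$-Borel. Atomic formulas evaluated at $\bar b$ give basic clopen sets (they constrain finitely many values of a single predicate), negation corresponds to complementation, and conjunctions and disjunctions of length $\leq\kappa$ correspond to $\kappa$-sized intersections and unions, under which the $\kappa^+$-Borel sets are closed. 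The only delicate clause is quantification: for a block $\exists\bar y\,\psi(\bar x,\bar y)$ with $\bar y$ of length $\delta<\kappa$ one has $\{\M:\M\models\exists\bar y\,\psi[\bar b]\}=\bigcup_{\bar c\in\pre{\delta}{\kappa}}\{\M:\M\models\psi[\bar b,\bar c]\}$, a union of $\kappa^\delta\leq\kappa^{<\kappa}=\kappa$ many $\kappa^+$-Borel sets, hence $\kappa^+$-Borel; the universal case is dual. This is exactly where $\kappa^{<\kappa}=\kappa$ is used.

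\emph{Direction} (i)$\Rightarrow$(ii). Let $A$ be $\kappa^+$-Borel and closed under isomorphism. By Corollary~\ref{improve} I may fix a $\kappa^+$-Borel code of the special form $(\Tr_\alpha,\ell)$ with $\ell$ taking values in $\boldsymbol{\Delta}^0_1(\Mod^\kappa_\L)$. Inspecting the proof of Theorem~\ref{orig1}, the set $B(\Tr_\alpha,\ell)$ is obtained from the clopen leaf-labels by alternately taking $\kappa$-sized intersections at the nodes where player $\1$ moves and $\kappa$-sized unions at the nodes where player $\2$ moves, with no further complementations above the leaf level (clopen sets being closed under complement). Thus I only need a translation procedure that commutes with $\kappa$-intersections and with $\kappa$-unions, that computes clopen sets, and that crucially produces \emph{invariant} objects, i.e.\ $\L_{\kappa^+\kappa}$-formulas, out of the non-invariant clopen conditions $\Nbhd_\Q$. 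The device is the pair of \emph{Vaught transforms} relative to the continuous relabeling action of the group $G=\mathrm{Sym}(\kappa)$ on $\Mod^\kappa_\L$: for a $\kappa^+$-Borel $B$ and a basic open condition $U$ of $G$ (a partial injection with bounded domain) one sets $B^{*U}=\{\M:\{g\in U:g\cdot\M\in B\}\text{ is comeager in }U\}$ and $B^{\triangle U}=\{\M:\{g\in U:g\cdot\M\in B\}\text{ is nonmeager in }U\}$. Under $\kappa^{<\kappa}=\kappa$ the space $\pre{\kappa}{\kappa}$, and hence $G$ with the subspace topology, satisfies the generalized Baire category theorem and the meager ideal is $\kappa$-additive; from this one extracts the laws $(\bigcap_{i<\kappa}B_i)^{*U}=\bigcap_{i<\kappa}B_i^{*U}$ and $(\bigcup_{i<\kappa}B_i)^{\triangle U}=\bigcup_{i<\kappa}B_i^{\triangle U}$, together with the duality $(\Mod^\kappa_\L\setminus B)^{*U}=\Mod^\kappa_\L\setminus B^{\triangle U}$ linking the two transforms through complementation.

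I would then prove by induction on the code tree, from the leaves upward and alternating between the two transforms according to whether a node belongs to player $\1$ or $\2$, that each node's set admits, relative to every condition $U$, an $\L_{\kappa^+\kappa}$-formula $\varphi^U(\bar x)$ (with free variables indexing the domain of $U$) defining it. At the leaves this reduces to expressing the transform of a basic clopen set $\Nbhd_\Q$, which is captured by an existentially quantified conjunction of literals recording the diagram of $\Q$ (the quantifiers corresponding to the freedom in placing $\mathrm{dom}(\Q)$; here $<\kappa$ quantifiers and $\leq\kappa$-sized conjunctions keep us inside $\L_{\kappa^+\kappa}$, again using $\kappa^{<\kappa}=\kappa$), while the inductive steps turn $\kappa$-intersections into $\kappa$-conjunctions and $\kappa$-unions into $\kappa$-disjunctions via the laws above. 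Applying this at the root with the empty condition and using that $A$ is invariant—so that $\{g:g\cdot\M\in A\}$ is either all of $G$ or empty and hence $A=A^{*\emptyset}$—I obtain a genuine $\L_{\kappa^+\kappa}$-\emph{sentence} $\upsigma$ with $A=\Mod^\kappa_\upsigma$. The main obstacle is precisely this transform machinery: establishing the generalized Baire category theorem and the $\kappa$-additivity of the meager ideal for the action, verifying that the transforms commute with the infinitary Boolean operations as stated, and carrying out the bookkeeping of conditions and free variables so that the formula produced at the root is parameter-free—all of which hinge on $\kappa^{<\kappa}=\kappa$.
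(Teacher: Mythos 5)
Your proposal is correct and follows essentially the same route as the proof the paper relies on for this theorem: the easy direction by induction on formula complexity, with \( \kappa^{<\kappa}=\kappa \) taming the \( \kappa \)-sized unions coming from quantifier blocks, and the hard direction via Vaught transforms for the logic action of \( S_\kappa \) together with the generalized Baire category theorem --- precisely the machinery the paper itself reproduces (in a \( * \)-transform-only formulation, where complements are handled by quantifying over extensions of the condition rather than via the dual transform \( \triangle \)) in the proof of Theorem~\ref{lopezext}. The one detail to make explicit when you write it up is that moving between a \( \1 \)-node and a \( \2 \)-node of the code requires the switching identities \( C^{*U}=\bigcap_{V\subseteq U}C^{\triangle V} \) and \( C^{\triangle U}=\bigcup_{V\subseteq U}C^{*V} \) (valid for sets with the Baire property, and the source of the quantifier blocks over extended conditions in the resulting sentence), since the complementation duality you state does not by itself let you alternate between the two transforms in a code with no complementations above the leaves.
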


A full proof of this theorem can be found in~\cite[Theorem 4.1]{Vau74} or~\cite[Theorem 24]{FHK14}, and more refined versions of it are presented in~\cite[Section 8.2]{AMR16}. 

\begin{remark} \label{rmk:borelrankfirstordertheories} 
Inspecting the above mentioned proofs from~\cite{FHK14,AMR16}, it is not hard to see that if some \( A \subseteq \Mod^\kappa_\L \) is axiomatized by a first-order formula, then its Borel rank is finite. It follows that if \( A  = \Mod^\kappa_T = \bigcap_{\upsigma \in T} \Mod^\kappa_\upsigma  \) for some countable first-order theory \( T \), then \( A \) has Borel rank \( \leq \omega \).
\end{remark}

For what follows, we need to modify Theorem~\ref{orig2} in two directions:
\begin{enumerate-(a)}
\item
we need to ``relativize'' it to arbitrary subspaces of \( \Mod^\kappa_\L \) closed under isomorphism;
\item
we need a level-by-level version connecting the Borel rank of the set \( A \) to the quantifier rank of the \( \L_{\kappa^+ \kappa} \)-sentence axiomatizing it, as defined below.
\end{enumerate-(a)}

\begin{defin}
Let \( \upvarphi \) be an \( \L_{\infty \kappa} \)-formula. The \textit{quantifier rank} $R(\upvarphi)$ of $\upvarphi$ is defined by recursion on the complexity of $\upvarphi$ as follows:
	\begin{itemize}
		\item if $\upvarphi$ is atomic, then $R(\upvarphi)=0$;
		\item if $\upvarphi$ is of the form $\neg\uppsi$, then $R(\upvarphi)=R(\uppsi)$;
		\item if $\upvarphi$ is of the form $\bigwedge_{j \in J} \uppsi_j$ for some set \( J \), then $R(\upvarphi)=\sup_{i \in J}R(\uppsi_j)$;
		\item if $\upvarphi$ is of the form $\exists\bar x \,\uppsi$, then $R(\upvarphi)=R(\uppsi)+1$.
	\end{itemize}
\end{defin}

\begin{defin}
A set \( \ElCl \subseteq \Mod^\kappa_\L \) is called an \emph{invariant set} if it is closed under isomorphism. If \( \ElCl \) is of the form \( \Mod^\kappa_T \) for \( T \) a first-order theory (respectively,  of the form \( \Mod^\kappa_\upvarphi \) for \( \upvarphi \) an \( \L_{\kappa^+ \kappa} \)-sentence) we say that \( \ElCl \) is \emph{axiomatized} by \( T \) (respectively, by \( \upvarphi \)) and call it a \emph{first-order elementary class} (respectively, an \emph{\( \L_{\kappa^+ \kappa} \)-elementary class}).
\end{defin}

\begin{theorem}\label{lopezext}
Let \( \kappa \) be such that \( \kappa^{< \kappa} = \kappa \).
Let \( \ElCl \subseteq \Mod^\kappa_\L \) be an arbitrary invariant set. Then for every \( A \subseteq \ElCl \), we have that \( A \in \mathbf{Bor}(\ElCl) \) and is closed under isomorphism if and only if there is an \( \L_{\kappa^+ \kappa} \)-sentence \( \upsigma \) such that \( A = \Mod^\kappa_\upsigma \cap {\ElCl} \). Moreover, if \( A \) is \( \kappa^+ \)-Borel and closed under isomorphism, then \( \upsigma \) can be chosen so that \( R(\upsigma) \leq \max \{ \mathrm{rk}_B(A),1 \} \).
\end{theorem}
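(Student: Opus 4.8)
The plan is to reduce Theorem~\ref{lopezext} to the already-known Theorem~\ref{orig2} by a relativization argument, and then to extract the quantifier rank bound by performing an induction on the Borel rank that mirrors the recursive structure of the $\kappa^+$-Borel hierarchy. First I would handle the two implications of the biconditional separately. For the direction from axiomatizability to $\kappa^+$-Borelness in $\ElCl$: given an $\L_{\kappa^+\kappa}$-sentence $\upsigma$, the set $\Mod^\kappa_\upsigma$ is $\kappa^+$-Borel and closed under isomorphism in the whole space $\Mod^\kappa_\L$ by Theorem~\ref{orig2}, and intersecting with $\ElCl$ yields a set that is $\kappa^+$-Borel in the subspace $\ElCl$ (using the fact, recorded in Section~\ref{sec:genDST}, that $A \in \boldsymbol{\Gamma}(\ElCl)$ iff $A = A' \cap \ElCl$ for some $A' \in \boldsymbol{\Gamma}(\pre{\kappa}{\kappa})$) and is clearly still invariant since both $\Mod^\kappa_\upsigma$ and $\ElCl$ are. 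For the converse direction, given $A \in \mathbf{Bor}(\ElCl)$ invariant, I would first pick a $\kappa^+$-Borel set $A' \subseteq \Mod^\kappa_\L$ with $A = A' \cap \ElCl$; the subtlety is that $A'$ need not be invariant, so one cannot directly apply Theorem~\ref{orig2} to it. The fix is to replace $A'$ by its invariant ``core'' or to argue directly that the invariant $\kappa^+$-Borel set one really needs is $A$ itself viewed inside the ambient space, using that $\ElCl$ is invariant; I expect this bookkeeping to be the first delicate point.

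For the quantifier-rank refinement, which is the substantive new content, I would prove by induction on $\alpha = \max\{\rk_B(A),1\}$ that every invariant $A \in \boldsymbol{\Sigma}^0_\alpha(\ElCl) \cup \boldsymbol{\Pi}^0_\alpha(\ElCl)$ is axiomatized over $\ElCl$ by an $\L_{\kappa^+\kappa}$-sentence $\upsigma$ with $R(\upsigma) \leq \alpha$. The base case $\alpha = 1$ (clopen, or open/closed invariant sets) should follow by analyzing the basic open sets $\Nbhd_\Q$ of the logic topology: each such set, once intersected appropriately and closed under isomorphism, is captured by a sentence asserting the existential embeddability of the finite/bounded structure $\Q$, which is a quantifier rank $\leq 1$ statement. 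For the inductive step, I would use that an invariant $\boldsymbol{\Sigma}^0_\alpha$ set is a $\kappa$-union of invariant sets of lower rank: here the key observation is that one may assume the pieces $A_\gamma$ in the union $\bigcup_{\gamma<\kappa} A_\gamma$ are themselves invariant (by replacing each $A_\gamma$ with its saturation, which does not increase the Borel rank since $\ElCl$ is invariant), so that by induction each is axiomatized by some $\upsigma_\gamma$ with $R(\upsigma_\gamma) < \alpha$; then $\bigvee_{\gamma<\kappa} \upsigma_\gamma$ axiomatizes the union and has quantifier rank $\sup_\gamma R(\upsigma_\gamma) \leq \alpha$. The $\boldsymbol{\Pi}^0_\alpha$ case follows by negation, which preserves quantifier rank by definition of $R$.

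The main obstacle I anticipate is precisely the claim that the building blocks of an invariant Borel set can be taken invariant without inflating the quantifier rank. A naive union of non-invariant pieces would force one to invoke Theorem~\ref{orig2} on each piece and lose all control of the rank, since that theorem is not level-by-level. The cleanest route is to establish a lemma asserting that for an invariant set $A$, if $A \in \boldsymbol{\Sigma}^0_\alpha(\ElCl)$ then $A$ is a $\kappa$-union of \emph{invariant} sets each in $\bigcup_{1 \leq \beta < \alpha}\boldsymbol{\Pi}^0_\beta(\ElCl)$; this should follow by taking the isomorphism-saturation of the pieces and checking, via the closure of $\ElCl$ under isomorphism together with the homogeneity of the logic action coded by permutations of $\kappa$, that saturation sends $\boldsymbol{\Pi}^0_\beta$ sets to $\boldsymbol{\Pi}^0_\beta$ sets. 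Granting this, the induction runs smoothly and the final bound $R(\upsigma) \leq \max\{\rk_B(A),1\}$ drops out, with the $\max$ accounting for the degenerate case where $A$ is already $\boldsymbol{\Delta}^0_1$ but a nontrivial sentence (of quantifier rank $1$) is still needed to express invariance of even an open condition.
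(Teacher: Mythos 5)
Your easy direction and the overall plan of inducting on the Borel rank are fine, but the converse direction rests on a lemma that is false: isomorphism-saturation does \emph{not} send \( \boldsymbol{\Pi}^0_\beta \) sets to \( \boldsymbol{\Pi}^0_\beta \) sets, and an invariant \( \boldsymbol{\Sigma}^0_\alpha \) set cannot in general be written as a \( \kappa \)-union of \emph{invariant} sets of lower rank. For a concrete failure of the first claim, take a singleton \( \{\M\} \), which is closed: its saturation is the orbit \( [\M]_{\cong} \), whose complexity is governed by the Scott height of \( \M \) and is in general unbounded (or the orbit is not even \( \kappa^+ \)-Borel), certainly not closed. For the second claim, note that by Corollary~\ref{cor:density} (and Proposition~\ref{prop:density} for uncountable \( \kappa \)) the only invariant open or closed subsets of \( \Mod^\omega_T \), for \( T \) countable complete, are \( \emptyset \) and \( \Mod^\omega_T \) itself; hence a nontrivial invariant true \( \boldsymbol{\Sigma}^0_2 \) subset of \( \Mod^\omega_T \) is not a countable union of invariant closed sets, and your induction cannot complete its first nontrivial step. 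Your remark that the \( \boldsymbol{\Pi}^0_\alpha \) case is free because negation preserves quantifier rank also hides the real cost: in any correct scheme the complementation step forces a universal quantification over extensions of the parameter tuple and hence a genuine \( {+}1 \) in quantifier rank, which is exactly where the bound \( R(\upsigma) \leq \max\{\mathrm{rk}_B(A),1\} \) has to be earned.

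The device that repairs this, and the one the paper uses, is the Vaught \( * \)-transform with respect to the logic action of the permutation group \( S_\kappa \): for \( A' \subseteq \Mod^\kappa_\L \) and \( u \in \pre{<\kappa}{\kappa} \) one sets \( A'^{*u} = \{ \M \mid \{ p \in \Nbhd_u^{-1} \mid p\M \in A' \} \text{ is \( \kappa \)-comeager in } \Nbhd_u^{-1} \} \), and builds, by induction on the construction of \( A' \), formulas \( \upvarphi^{A'}_\beta((x_i)_{i<\beta}) \) defining \( A'^{*u} \), keeping track of quantifier ranks through four cases (basic open sets, their complements, \( \kappa \)-intersections, and complements, the last one costing the \( {+}1 \)). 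The transform does not require \( A' \) to be invariant; instead, one picks any \( A' \in \boldsymbol{\Pi}^0_\alpha(\Mod^\kappa_\L) \) with \( A = A' \cap \ElCl \) and shows, using the generalized Baire category theorem for \( S_\kappa \) (valid since \( \kappa^{<\kappa} = \kappa \)) together with the invariance of \( A \) and \( \ElCl \), that \( (A')^{*\emptyset} \cap \ElCl = A \). This is the missing idea in your argument: the ``invariant core'' you allude to must be the category-theoretic transform, not the set-theoretic saturation.
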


\begin{proof}
One direction is easy: if \( \upsigma \) is an \( \L_{\kappa^+ \kappa} \)-sentence, then \( \Mod^\kappa_\upsigma \in \mathbf{Bor}(\Mod^\kappa_\L)\) by Theorem~\ref{orig2}, hence \( A = \Mod^\kappa_\upsigma \cap \ElCl \in \mathbf{Bor}(\ElCl) \).

For the other direction, following the proof of Theorem~\ref{orig2} we first need the following claim. Let \( S_\kappa \subseteq \pre{\kappa}{\kappa} \) be the group of all permutations \( p \colon \kappa \to \kappa \) endowed with the relative topology. A basis for \( S_\kappa \) is given by the sets
\[ 
\Nbhd_u^{-1} = \{ p \in S_\kappa \mid p^{-1} \in \Nbhd_u \cap S_\kappa  \},
 \] 
where \( u \in \pre{< \kappa}{\kappa} \) and \( \Nbhd_u \) is the usual basic open set of \( \pre{\kappa}{\kappa} \) determined by \( u \) (clearly, \( \Nbhd_u^{-1} \neq \emptyset \) if and only if \( u \) is injective). Recall that if \( \kappa^{< \kappa} = \kappa \) then \( S_\kappa \) satisfies the (generalized) Baire category theorem: every nonempty open subset of \( S_\kappa \) is not \( \kappa \)-meager, i.e.\ it is not the union of \( \kappa \)-many nowhere dense sets (\cite[Theorem 6.12]{AMR16}). Consider the logic action of \( S_\kappa \) on \( \Mod^\kappa_\L \) defined by letting \( p \M \) be the unique \( \L \)-structure for which \( p \) is an isomorphism between \( \M \) and \( p \M \); notice that such action is continuous. Finally, given \( A \subseteq \Mod^\kappa_\L \) and \( u \in \pre{<\kappa}{\kappa} \), let \( A^{*u} \subseteq \Mod^\kappa_\L \) be the collection of those \( \M \) for which the set \( \{ p \in \Nbhd^{-1}_u \mid p \M \in A  \} \) is \( \kappa \)-comeager (i.e.\ the complement of a \( \kappa \)-meager set) in \( \Nbhd^{-1}_u \).

\begin{claim} \label{claim:axioms}
Let \( A \in \mathbf{Bor}(\Mod^\kappa_\L ) \). For every \( \beta < \kappa \) there is an \( \L_{\kappa^+ \kappa} \)-formula \( \upvarphi_\beta^A((x_i)_{i < \beta}) \) such that for every \( u \in \pre{\beta}{\kappa} \)
\begin{equation} \label{eq:axioms}
A^{*u} = \{ \M \in \Mod^\kappa_\L \mid \M \models \upvarphi^A_\beta[u] \},
 \end{equation}
where \( \upvarphi^A_\beta[u] \) is obtained by assigning  to each variable \( x_i \), \( i < \beta \), the element \mbox{\( u(i) \in \M \).} Moreover, if \( A \in \boldsymbol{\Pi}^0_\alpha(\Mod^\kappa_\L ) \) for some \(  \alpha < \kappa^+ \), then we can also get  \( R(\upvarphi^A_\beta) \leq \max \{ \alpha, 1 \} \).
\end{claim}

\begin{proof}[Proof of the claim]
A full proof of the first part of the claim can be found in the second part of the proof of~\cite[Theorem 24]{FHK14} or  in~\cite[Lemma 8.16]{AMR16} (indeed, Claim~\ref{claim:axioms} is the special case  where \( \lambda  = \mu = \kappa = \kappa^{< \kappa} \)). Here we just observe that the formulas \( \upvarphi^A_\beta \) constructed therein have the correct quantifier rank. We first consider four basic cases.
\begin{enumerate-(1)}
\item \label{axioms:case1}
Assume that \( A = \Nbhd_\Q \) is a basic open set of \( \Mod^\kappa_\L \). Let \( \gamma < \kappa \) be smallest such that \(  \Q \subseteq \gamma \), and let \( \uptheta((y_j)_{j < \gamma} ) \) be the (conjunction of the formulas in the) \( \L' \)-atomic diagram of \( \Q \), where \( \L' \) is the signature of \( \Q \) (i.e.\ the unique sublanguage of \( \L \) of size \( < \kappa \) such that \( \Q\) is an \( \L' \)-structure). Then \( \upvarphi^A_\beta \) is either
\[ 
\bigwedge_{j < \gamma} (x_j = y_j) \wedge \uptheta((y_j)_{j < \gamma} )
 \] 
or
\[ 
\forall (y_j)_{j < \gamma} \left ( \bigwedge_{i < \beta} (x_i = y_i) \wedge \bigwedge_{i<j<\gamma} (y_i \neq y_j) \to \uptheta((y_j)_{j < \gamma})  \right ),
 \] 
depending on whether \( \gamma \leq \beta \) or \( \beta < \gamma \). Notice that in both cases, \( R(\varphi^A_\beta) \leq 1 \).
\item \label{axioms:case2}
Assume now that \( A = \Mod^\kappa_\L \setminus \Nbhd_\Q \). This case was not explicitly dealt with in~\cite{FHK14,AMR16}. However, inspecting the proof of the previous case one easily sees that it is enough to systematically replace \( \uptheta((y_j)_{j < \gamma}) \) with \( \neg \uptheta((y_j)_{j < \gamma}) \) in the formulas appearing in~\ref{axioms:case1} to get the desired \( \upvarphi^A_\beta \). So also in this case \( R(\varphi^A_\beta) \leq 1 \).
\item \label{axioms:case3}
Assume now that \( A = \bigcap_{\delta < \kappa} A_\delta \) with \( \upvarphi^{A_\delta}_\beta \) witnessing~\eqref{eq:axioms} for the set \( A_\delta \) (and the same \( \beta < \kappa \)). Then it is enough to let \( \upvarphi^A_\beta \) be the formula \( \bigwedge_{\delta < \kappa} \upvarphi^{A_\delta}_\beta \). Notice that \( R(\varphi^A_\beta) = \sup_{\delta < \kappa} R(\upvarphi^{A_\delta}_\beta) \).
\item \label{axioms:case4}
Finally, assume that \( A = \Mod^\kappa_\L \setminus B \) with \( \upvarphi^B_\gamma \) witnessing~\eqref{eq:axioms}  for the set \( B \) and \emph{an arbitrary \( \gamma < \kappa \)}.  Then it is enough to let \( \upvarphi^A_\beta \) be the formula
\[ 
\bigwedge_{\beta < \gamma < \kappa}  \forall (y_j)_{j < \gamma} \left [ \left ( \bigwedge_{i < \beta} (x_i = y_i) \wedge \bigwedge_{i<j<\gamma} (y_i \neq y_j)  \right ) \to \neg \upvarphi^B_\gamma((y_j)_{j < \gamma}) \right ]
 \] 
Notice that \( R(\upvarphi^A_\beta) = \sup_{\beta < \gamma < \kappa} (R(\upvarphi^B_\gamma)+1) \).
\end{enumerate-(1)}
Using these facts, one can easily check by induction on \( \alpha < \kappa^+ \) that the second part of the statement is true. Indeed, for the basic cases \( \alpha = 0 \) or \( \alpha = 1 \) it is enough to observe%
\footnote{Notice that the bound on \( R(\upvarphi^A_\beta) \) cannot be improved when \( \alpha = 0 \): even in the simplest case of a (nontrivial) basic clopen set \( A \), we still have  \( R(\upvarphi^A_\beta)  = 1\) for small enough \( \beta \)'s.} 
that since we assumed 
\( \kappa^{< \kappa} = \kappa \), each \( A \in \boldsymbol{\Pi}^0_1(\Mod^\kappa_\L ) \) different from \( \Mod^\kappa_\L \) is 
of the form 
\( A = \bigcap_{\delta < \kappa} (\Mod^\kappa_L \setminus \Nbhd_{\Q_\delta}) \) for the appropriate 
basic open sets \( \Nbhd_{\Q_\delta} \), whence \( R(\upvarphi^A_\beta) \leq 1 \) by~\ref{axioms:case2} 
and~\ref{axioms:case3}. If instead \( A = \Mod^\kappa_\L \), then \( A = \Nbhd_\emptyset \) and hence we can conclude as well using~\ref{axioms:case1}. The limit case obviously follows from~\ref{axioms:case3}. Finally, the successor 
step follows from~\ref{axioms:case4} and~\ref{axioms:case3}, together with the fact that each 
\( A \in \boldsymbol{\Pi}^0_{\alpha+1}(\Mod^\kappa_\L) \) is by definition of the form 
\( A = \bigcap_{\delta< \kappa} (\Mod^\kappa_\L \setminus A_\delta ) \) with 
\( A_\delta \in \boldsymbol{\Pi}^0_\alpha(\Mod^\kappa_\L) \) for every \( \delta < \kappa \) (here we 
are also using  that \( R(\upvarphi^{A_\delta}_\gamma) \) is actually independent of 
\( \gamma \)).
\end{proof}
Assume now that \( A \in \mathbf{Bor}(\ElCl) \), with \( \mathrm{rk}_B(A) = \alpha \) for some \(  \alpha < \kappa^+ \), and let it be closed under isomorphism. 
Without loss of generality we may assume \( A \in \boldsymbol{\Pi}^0_\alpha(\ElCl) \). (Indeed, if 
\( A \in \boldsymbol{\Sigma}^0_\alpha(\ElCl) \) and \( \upsigma \) is the \( \L_{\kappa^+ \kappa} \)-sentence witnessing 
the theorem for \( \ElCl \setminus A \in \boldsymbol{\Pi}^0_\alpha(\ElCl) \), then \( \neg \upsigma \) is a witness  for \( A \).)  Let 
\( A' \in \boldsymbol{\Pi}^0_\alpha(\Mod^\kappa_\L) \) be such that \( A = A' \cap \ElCl \). Applying Claim~\ref{claim:axioms} 
to such \( A' \) with \( \beta = 0 \) we get an \( \L_{\kappa^+ \kappa } \)-sentence \( \upvarphi^{A'}_0 \) such that 
\( (A')^{* \emptyset} = \{ \M \in \Mod^\kappa_L \mid \M \models \upvarphi^{A'}_0 \} = \Mod^\kappa_{\upvarphi^{A'}_0} \): 
we claim
that \( \upvarphi^{A'}_0 \) is the \( \L_{\kappa^+ \kappa} \)-sentence \( \upsigma \) witnessing the theorem for $A$. We first prove that
\[ 
(A')^{* \emptyset} \cap \ElCl = A.
 \] 
In fact, if \( \M \in A \) then \( \{ p \in S_\kappa = \Nbhd^{-1}_\emptyset \mid p\M \in A' \} = S_\kappa \) because \( A = A' \cap \ElCl \) is closed under isomorphism, and \( S_\kappa \) is trivially \( \kappa \)-comeager in 
\( \Nbhd^{-1}_\emptyset  = S_\kappa \) because \( S_\kappa \) satisfies the (generalized) Baire category theorem; it follows 
that \( \M \in (A')^{* \emptyset} \cap \ElCl \). Conversely, if \( \M \in \ElCl \setminus A \) then 
\( \{ p \in \Nbhd^{-1}_\emptyset \mid p\M \in A' \} = \emptyset \) because both \( \ElCl \) and \( A = A' \cap \ElCl \) are closed under 
isomorphism; since \( \emptyset \) is trivially \( \kappa \)-meager (hence not \( \kappa \)-comeager) in 
\( \Nbhd^{-1}_\emptyset \), we conclude \( \M \notin (A')^{* \emptyset} \cap \ElCl \). 

Summing up, we have shown that \( A = \Mod^\kappa_{\upvarphi^{A'}_0} \cap \ElCl \), and hence we are done because \( \mathrm{rk}_B(A) = \alpha \) and  \( R(\upvarphi^{A'}_0) \leq \max \{ \alpha ,1 \} \) by the second part of Claim~\ref{claim:axioms}.
\end{proof}

\subsection{Equivalence relations}

Throughout the paper we will repeatedly use the following easy observations, often without explicitly mentioning them.

\begin{fact} \label{fct:fromertoclass}
Let \( E \) be an equivalence relation on a topological space \( X \). Then each \( E \)-equivalence class \( [x]_E \) is the continuous preimage of \( E \) through the continuous function \( f \colon X \to X^2 \) defined by setting \( f(y) = (y,x) \).
\end{fact}

\begin{fact} \label{fct:fromclasstoer}
Let \( E \) be an equivalence relation on \( X \). Then
\[ 
E = \bigcup \{ [x]_E \times [x]_E \mid x \in X \}
 \] 
and
\[ 
X^2 \setminus E = \bigcup \{ [x]_E \times [y]_E \mid x,y \in X \text{ and } x \not\mathrel{E} y \}.
 \] 
\end{fact}

\begin{prop}[Folklore] \label{prop:open}
Let \( X \) be any topological space, and \( E \) be an equivalence relation on \( X \). The following are equivalent:
\begin{enumerate-(a)}
\item \label{prop:open-a}
\( E \) is open;
\item \label{prop:open-b}
all \( E \)-equivalence classes are open;
\item \label{prop:open-c}
all \( E \)-equivalence classes are clopen;
\item \label{prop:open-d}
\( E \) is clopen.
\end{enumerate-(a)}
If the equivalent conditions above are satisfied for \( E \) and \( X \) has a basis of size \( \kappa \), then \( E \) has at most \( \kappa \)-many classes.
\end{prop}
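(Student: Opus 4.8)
The plan is to prove the equivalence via the cyclic chain \ref{prop:open-a} \IMP \ref{prop:open-b} \IMP \ref{prop:open-c} \IMP \ref{prop:open-d} \IMP \ref{prop:open-a}, using Facts~\ref{fct:fromertoclass} and~\ref{fct:fromclasstoer} to pass back and forth between the relation \( E \subseteq X^2 \) and its classes \( [x]_E \subseteq X \). The implication \ref{prop:open-d} \IMP \ref{prop:open-a} is immediate since clopen sets are in particular open, so the content lies in the first three steps.

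For \ref{prop:open-a} \IMP \ref{prop:open-b}, I would simply invoke Fact~\ref{fct:fromertoclass}: each class is \( [x]_E = f^{-1}(E) \) for the continuous map \( f(y) = (y,x) \), so if \( E \) is open then every \( [x]_E \) is open as a continuous preimage of an open set. For \ref{prop:open-b} \IMP \ref{prop:open-c}, note that if all classes are open then the complement of any single class is \( X \setminus [x]_E = \bigcup \{ [y]_E \mid y \not\mathrel{E} x \} \), a union of (open) classes and hence open; thus each \( [x]_E \) is clopen. For \ref{prop:open-c} \IMP \ref{prop:open-d}, I would use Fact~\ref{fct:fromclasstoer}: when all classes are clopen, \( E = \bigcup \{ [x]_E \times [x]_E \mid x \in X \} \) is a union of products of open sets, hence open, while \( X^2 \setminus E = \bigcup \{ [x]_E \times [y]_E \mid x \not\mathrel{E} y \} \) is likewise a union of open products, hence open as well; therefore \( E \) is clopen.

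For the final cardinality claim, assume the equivalent conditions hold and fix a basis \( \mathcal{B} \) for \( X \) of size \( \leq \kappa \). Since each class \( [x]_E \) is open by \ref{prop:open-b} and nonempty (as \( x \in [x]_E \)), it contains some \emph{nonempty} basic open set \( B_x \in \mathcal{B} \); a nonempty open set is a union of basic open sets, at least one of which must be nonempty, so such a \( B_x \) exists. As distinct classes are pairwise disjoint, the assignment \( [x]_E \mapsto B_x \) is injective into \( \mathcal{B} \), whence the number of classes is at most \( |\mathcal{B}| \leq \kappa \).

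There is no genuine obstacle here, this being a folklore fact; the only points requiring a little care are the use of the product topology in \ref{prop:open-c} \IMP \ref{prop:open-d} (one needs openness of \( E \) and of its complement separately, both supplied by Fact~\ref{fct:fromclasstoer}) and ensuring in the counting argument that the chosen basic open sets are nonempty, so that disjointness of the classes really forces injectivity of \( [x]_E \mapsto B_x \).
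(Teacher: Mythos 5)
Your proof is correct and follows essentially the same route as the paper: the same cyclic chain of implications using the two preceding Facts, with \ref{prop:open-d}~$\IMP$~\ref{prop:open-a} being trivial. The only cosmetic difference is in the cardinality claim, where you build an injection from classes into the basis (choosing a nonempty basic set inside each class) while the paper constructs a surjection from the basis onto \( X/E \); both arguments are valid and rest on the same observation that the classes are nonempty, open, and pairwise disjoint.
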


\begin{proof}
Given \( x \in X \), let \( [x]_E \) be the \( E \)-equivalence class of \( E \). If \( E \) is open, then 
\( [x]_E \) is open as well by Fact~\ref{fct:fromertoclass}: this shows \ref{prop:open-a}~$\IMP$~\ref{prop:open-b}.
To show \ref{prop:open-b}~$\IMP$~\ref{prop:open-c} it is enough to observe that 
\[ 
X \setminus [x]_E = \bigcup \{ [y]_E \mid x,y \in X \text{ and } x \not\mathrel{E} y \} ,
\] 
while \ref{prop:open-c}~$\IMP$~\ref{prop:open-d} follows from Fact~\ref{fct:fromclasstoer}.
Since \ref{prop:open-d}~$\IMP$~\ref{prop:open-a} is obvious, we get that the four conditions \ref{prop:open-a}--\ref{prop:open-d} are equivalent to each other.

For the additional part, notice that if \( \mathcal{B} \) is a basis for \( X \) and \( x_0 \) is any element of \( X \), then the map \( f \colon \mathcal{B} \to X/E \) defined by
\[ 
f(B) =
\begin{cases}
[x]_E & \text{if } B \neq \emptyset \text{ and } B \subseteq [x]_E \\
[x_0]_E  & \text{otherwise}
\end{cases}
 \] 
is well-defined and surjective (because all \( E \)-equivalence classes are nonempty open sets).
\end{proof}

\begin{cor}
For any topological space there is no true \( \boldsymbol{\Sigma}^0_1 \) equivalence relation on \( X \). Moreover, if \( X \) is connected (e.g.\ \( X = \RR^n \)) than the unique (cl)open equivalence relation on it is the trivial one, that is \( E = X^2 \).
\end{cor}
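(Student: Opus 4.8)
The plan is to read off both claims directly from Proposition~\ref{prop:open}, which already contains all the substance; the corollary is merely a repackaging of the equivalence \ref{prop:open-a}~$\IFF$~\ref{prop:open-d} together with the meaning of ``true $\boldsymbol{\Sigma}^0_1$''.

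First I would unwind the terminology. By the conventions fixed above, a \emph{true $\boldsymbol{\Sigma}^0_1(X)$ set} is one lying in $\boldsymbol{\Sigma}^0_1(X)$ but in no class properly contained in it, in particular not in $\boldsymbol{\Delta}^0_1(X) = \boldsymbol{\Sigma}^0_1(X) \cap \boldsymbol{\Pi}^0_1(X)$. Hence a true $\boldsymbol{\Sigma}^0_1$ set is exactly an open set that fails to be clopen. For the first assertion I would then argue by contradiction: if $E$ were a true $\boldsymbol{\Sigma}^0_1$ equivalence relation, it would in particular be open, so Proposition~\ref{prop:open} (\ref{prop:open-a}~$\IMP$~\ref{prop:open-d}) would force $E$ to be clopen, i.e.\ $\boldsymbol{\Delta}^0_1$, contradicting trueness. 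Thus no such $E$ can exist.

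For the second assertion I would combine Proposition~\ref{prop:open} with connectedness. Given a (cl)open equivalence relation $E$ on a connected space $X$, implication \ref{prop:open-a}~$\IMP$~\ref{prop:open-c} of the proposition makes every equivalence class $[x]_E$ clopen, and each such class is nonempty because $x \in [x]_E$. Since the only clopen subsets of a connected space are $\emptyset$ and $X$, each class must equal $X$; therefore $E$ has a single class and $E = X^2$.

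There is no genuine obstacle here: both parts are immediate once the definition of ``true $\boldsymbol{\Sigma}^0_1$'' is spelled out and the relevant implications of Proposition~\ref{prop:open} are invoked. The only point needing a moment's care is to observe that equivalence classes are automatically nonempty, so that connectedness can genuinely collapse them onto the whole space rather than permitting the empty clopen set.
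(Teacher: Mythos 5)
Your proposal is correct and is exactly the argument the paper intends: the corollary is stated without proof as an immediate consequence of Proposition~\ref{prop:open}, and your unwinding of ``true \( \boldsymbol{\Sigma}^0_1 \)'' as ``open but not clopen'' plus the observation that nonempty clopen classes in a connected space must be all of \( X \) is precisely what is being left to the reader.
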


Proposition~\ref{prop:open} shows that no equivalence relation can be a true open set; for all other possible complexities one can instead build an equivalence relation of exactly that complexity.

\begin{example}[Folklore] \label{xmp:allcomplexities}
Let \( X \) be a Hausdorff topological space and \( \boldsymbol{\Gamma} \) be one of the classes \( \boldsymbol{\Sigma}^0_\alpha \), \( \boldsymbol{\Pi}^0_\alpha \), or \( \boldsymbol{\Delta}^0_\alpha \), with the sole exception of \( \boldsymbol{\Sigma}^0_1 \). Suppose that there is a true \( \boldsymbol{\Gamma}(X) \) set \( A \). Define the equivalence relation \( E_A \) on \( Y =  X \times \{ 0,1 \} \) by setting for \( x,y \in X \) and \( i,j \in \{ 0,1 \} \) 
\[ 
(x,i) \mathrel{E_A} (y,j) \IFF (x = y ) \wedge (i = j \vee x \in A). 
 \] 
Then all \( E_A \)-equivalence classes have either \(1 \) or \( 2 \) elements (in particular, they are closed sets), and \( E_A \) is a true \( \boldsymbol{\Gamma} \) set.
\end{example}

\section{Categoricity from the topological viewpoint} \label{subsec:categoricity}

It is well known that every complete first-order theory $T$ has the joint embedding property, namely: for every pair \( \M_0 , \M_1 \) of models of \( T \)  there is \( \N \models T \) in which \( \M_0 \) and \( \M_1 \) jointly embed (see any classical model theory textbook, e.g. \cite{hodges1993model}). From this one can easily infer that \( T \) satisfies a cardinality-preserving version of such property.

\begin{lemma} \label{lem:jep}
Let \( \kappa \) be any infinite cardinal, \( T \) be a (not necessarily countable) complete first-order theory in a language \( \L \) of size \( \leq \kappa \), and let \( \M_0,\M_1 \in \Mod^\kappa_T \). Then there exists $\N\in\Mod^\kappa_T$ such that $\M_0$ and $\M_1$ both embed into it.
\end{lemma}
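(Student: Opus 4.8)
The plan is to upgrade the ordinary joint embedding property (JEP), which produces some model $\N \models T$ into which both $\M_0$ and $\M_1$ embed but with no control on $|\N|$, to a version where we can take $|\N| = \kappa$. First I would invoke the classical JEP stated in the paragraph preceding the lemma: there is \emph{some} $\N' \models T$ together with embeddings $e_0 \colon \M_0 \to \N'$ and $e_1 \colon \M_1 \to \N'$. A priori $\N'$ may be much larger than $\kappa$, so the work is to cut it down to size $\kappa$ while preserving both embeddings and membership in $\Mod^\kappa_T$.

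The key tool will be the downward L\"owenheim--Skolem theorem. Since $\L$ has size $\leq \kappa$ and both $\M_0, \M_1$ have size $\kappa$, the set $e_0[\M_0] \cup e_1[\M_1] \subseteq \N'$ has cardinality $\leq \kappa$. By downward L\"owenheim--Skolem I can find an elementary substructure $\N \preceq \N'$ with $e_0[\M_0] \cup e_1[\M_1] \subseteq \N$ and $|\N| = \kappa$ (taking $|\N| = \kappa$ rather than $\leq \kappa$ is harmless: if it came out smaller we pad the generating set with $\kappa$-many arbitrary elements of $\N'$ before applying the theorem, which is possible as $|\N'| \geq \kappa$). Because $\N \preceq \N'$ and $\N' \models T$ with $T$ complete, we get $\N \models T$, and since $|\N| = \kappa$ we conclude $\N \in \Mod^\kappa_T$. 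The two embeddings $e_0, e_1$ have their images inside $\N$, so corestricting them gives embeddings $\M_0 \hookrightarrow \N$ and $\M_1 \hookrightarrow \N$, as required.

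I expect no serious obstacle here; the statement is essentially a routine combination of JEP and L\"owenheim--Skolem, and the hypothesis $|\L| \leq \kappa$ is exactly what makes the L\"owenheim--Skolem bound land at $\kappa$ rather than at $\kappa + |\L| > \kappa$. The one point deserving a moment's care is ensuring the final model has size \emph{exactly} $\kappa$ rather than merely $\leq \kappa$, so that it genuinely lives in the space $\Mod^\kappa_T = \prod_{i \leq I} \pre{(\pre{n_i}{\kappa})}{2}$ of codes for $\kappa$-sized structures; this is handled by the padding remark above. A second minor subtlety is that the embeddings $e_0,e_1$ need only be (not necessarily elementary) structure embeddings for the conclusion, which is what JEP already delivers, so the elementarity of $\N \preceq \N'$ is used solely to transfer $T$ downward, not to constrain the embeddings themselves.
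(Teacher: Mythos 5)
Your proof is correct and follows essentially the same route as the paper's: apply the joint embedding property and then cut down with downward L\"owenheim--Skolem to an elementary substructure of size \( \kappa \) containing the images of both embeddings (the paper packages this last step by adding \( \kappa \)-many constants naming the images before applying L\"owenheim--Skolem, which is the same idea). Your padding remark is in fact superfluous, since the substructure already contains \( e_0[\M_0] \), which has size \( \kappa \).
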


\begin{proof}
Use the joint embedding property to find some \( \widetilde{\N} \models T \) such that \( \M_0,\M_1 \) both embed into \( \widetilde{\N}\) (so that in particular \( \widetilde{\N} \) has size \( \geq \kappa \)), and fix embeddings \( f_0,f_1 \) witnessing this. Let \( \L' \) be the language obtained by adding to \( \L \) a new constant symbol \( a_\alpha \) for any \( \alpha \in \M_0 \) and a new constant symbol \( b_\beta \) for any \( \beta \in \M_1 \). Expand \( \widetilde{\N} \) to an \( \L' \)-structure \( \widetilde{\N}' \) by interpreting each \( a_\alpha \) in \( f_0(\alpha) \) and each \( b_\beta \) in \( f_1(\beta) \). Since \( \L' \) has size \( \leq \kappa \), by the downward L\"owenheim-Skolem theory there is an elementary substructure \( \N' \) of \( \widetilde{\N}' \) of size \( \kappa \). By the choice of the interpretations of \( a_\alpha, b_\beta \) in \( \widetilde{\N}' \), it follows that the \( \L \)-reduct \( \N \) of \( \N' \) is (isomorphic to a structure) in \( \Mod^\kappa_T \) and both \( \M_0 \) and \( \M_1 \) embed into it.\end{proof}

\begin{prop} \label{prop:density}
Let \( \kappa \) be any infinite cardinal, \( T \) be a (not necessarily countable) complete first-order theory in a language \( \L \) of size \( \leq \kappa \), and let \( \M \in \Mod^\kappa_T \). If \( [\M]_{\cong} \) has nonempty interior (with respect to \( \Mod^\kappa_T \)), then it is dense in \( \Mod^\kappa_T \).
\end{prop}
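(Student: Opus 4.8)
The plan is to exploit the joint embedding property (Lemma~\ref{lem:jep}) together with the hypothesis on the interior in order to show first that \( \M \) is \emph{universal}, i.e.\ that every model in \( \Mod^\kappa_T \) embeds into \( \M \), and then to ``realize'' any prescribed bounded piece inside an isomorphic copy of \( \M \) by relabeling its underlying set.

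Before anything I would isolate an elementary relabeling observation to be used twice. Suppose \( \R \) is a structure with domain bounded in \( \kappa \) (so that \( |\R| < \kappa \)) which embeds, via some \( e \), into the appropriate reduct of a model \( \P \in \Mod^\kappa_T \). Since \( |\R| < \kappa \) while \( \P \) has domain \( \kappa \), both \( \kappa \setminus \dom \R \) and \( \kappa \setminus e(\dom \R) \) have size \( \kappa \); hence the bijection \( (e{\restriction}\dom \R)^{-1} \) from \( e(\dom \R) \) onto \( \dom \R \) extends to a permutation \( \pi \) of \( \kappa \). Pushing \( \P \) forward along \( \pi \) produces a structure \( \P' \cong \P \) for which \( \pi \circ e \) is the identity on \( \dom \R \), so that \( \R \) is literally a substructure of (the relevant reduct of) \( \P' \); in other words \( \P' \in \Nbhd_\R \cap \Mod^\kappa_T \). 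This is routine, but it is exactly where the bound \( |\R| < \kappa \) coming from boundedness is essential.

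Now I would fix a nonempty basic open set \( \Nbhd_\Q \cap \Mod^\kappa_T \subseteq [\M]_{\cong} \) witnessing that the interior is nonempty. Picking any \( \N_0 \) in it, we have \( \N_0 \cong \M \) and \( \Q \) a substructure of \( \N_0 \), so \( \Q \) embeds into \( \M \). I then claim that every \( \N_1 \in \Mod^\kappa_T \) embeds into \( \M \): by Lemma~\ref{lem:jep} there is \( \N \in \Mod^\kappa_T \) into which both \( \M \) and \( \N_1 \) embed; composing, \( \Q \) embeds into \( \N \), so the relabeling observation yields \( \N' \cong \N \) with \( \N' \in \Nbhd_\Q \cap \Mod^\kappa_T \subseteq [\M]_{\cong} \); thus \( \N \cong \N' \cong \M \), and therefore \( \N_1 \) embeds into \( \N \cong \M \), as claimed.

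Finally I would deduce density. Let \( \Nbhd_{\Q'} \cap \Mod^\kappa_T \) be an arbitrary nonempty basic open subset of \( \Mod^\kappa_T \) and pick \( \N_1 \) in it, so that \( \Q' \) (of bounded domain) is a substructure of \( \N_1 \). By the universality just established, \( \N_1 \) embeds into \( \M \), hence so does \( \Q' \). Applying the relabeling observation with \( \R = \Q' \) and \( \P = \M \) gives some \( \M' \cong \M \) with \( \M' \in \Nbhd_{\Q'} \cap \Mod^\kappa_T \). Thus \( [\M]_{\cong} \) meets every nonempty basic open subset of \( \Mod^\kappa_T \), i.e.\ it is dense. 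The main obstacle is the conceptual step of the previous paragraph, namely recognizing that a nonempty interior forces \( \M \) to be universal: one feeds the distinguished piece \( \Q \) back into an arbitrary joint extension \( \N \) and uses that \emph{everything} containing \( \Q \) is a copy of \( \M \). Once this is seen, the permutation bookkeeping is purely routine.
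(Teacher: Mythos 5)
Your proof is correct and uses essentially the same ingredients as the paper's: Lemma~\ref{lem:jep}, permutations of the underlying set \( \kappa \) to realize a bounded substructure inside an isomorphic copy, and the fact that every model in the distinguished basic open set is isomorphic to \( \M \). The only cosmetic difference is that you factor the argument through the intermediate claim that \( \M \) is universal for \( \Mod^\kappa_T \), whereas the paper permutes the single joint extension \( \N \) directly into both basic open sets; the universality statement is a pleasant byproduct but not needed.
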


\begin{proof}
Let \( \Q_0 \) be such that \( \emptyset\neq\Nbhd_{\Q_0} \cap \Mod^\kappa_T \subseteq [\M]_{\cong} \), and fix an 
arbitrary \( \Q_1 \) such that \( \Nbhd_{\Q_1} \cap \Mod^\kappa_T \neq \emptyset \) (in particular, \( \Q_0 \) and \( \Q_1 \) are suitable reducts of bounded substructures of \( \kappa \)-sized model of \( T \)): we want to show 
that there is \( \N_1 \cong \M \) such that \mbox{\( \N_1 \in \Nbhd_{\Q_1} \cap \Mod^\kappa_T \).} By applying Lemma \ref{lem:jep} to any \mbox{\(\M_0\in \Nbhd_{\Q_0} \cap \Mod^\kappa_T \)} and \( \M_1\in\Nbhd_{\Q_1} \cap \Mod^\kappa_T \) we obtain $\N\in\Mod^\kappa_T$ such that $\M_0$ and $\M_1$ (and therefore $\Q_0$ and $\Q_1$) embed into it. Appropriate permutations of $\N$ will induce isomorphic copies \mbox{$\N_0\in \Nbhd_{\Q_0} \cap \Mod^\kappa_T$} and $\N_1\in \Nbhd_{\Q_1} \cap \Mod^\kappa_T$. By our choice of $\Q_0$ we have \mbox{$\N_0\cong\M$}, whence also $\N_1\cong\N\cong\N_0\cong\M$.
\end{proof}

As a corollary, we get the following purely topological characterization of categoricity.

\begin{theorem} \label{thm:catcharbounded}
Let \( \kappa \) be any infinite cardinal and \( T \) be a (not necessarily countable) complete first-order theory in a language \( \L \) of size \( \leq \kappa \). The following are equivalent:
\begin{enumerate-(1)}
\item \label{thm:catcharbounded-1}
\( T \) is \( \kappa \)-categorical;
\item \label{thm:catcharbounded-2}
\( \mathrm{rk}_B({\cong^\kappa_T}) = 0 \), i.e.\ \( \cong^\kappa_T \) is clopen;
\item \label{thm:catcharbounded-4}
\( [\M]_{\cong}\) is (cl)open for every \( \M \in \Mod^\kappa_T \);
\item \label{thm:catcharbounded-5}
there is \( \M \in \Mod^\kappa_T \) such that \( [\M]_{\cong} \) is clopen.
\end{enumerate-(1)}
In particular, there is no complete non-\( \kappa \)-categorical first-order theory \( T \) for which \( \cong^\kappa_T \) is a nontrivial open set.
\end{theorem}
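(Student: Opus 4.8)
\[
\ref{thm:catcharbounded-1} \IMP \ref{thm:catcharbounded-4} \IMP \ref{thm:catcharbounded-2} \IMP \ref{thm:catcharbounded-5} \IMP \ref{thm:catcharbounded-1},
\]
using Proposition~\ref{prop:open} to collapse the distinction between ``open'' and ``clopen'' throughout, and leaning on Proposition~\ref{prop:density} for the crucial last step. For \ref{thm:catcharbounded-1}~$\IMP$~\ref{thm:catcharbounded-4}, if \( T \) is \( \kappa \)-categorical then there is a single isomorphism class, so \( [\M]_{\cong} = \Mod^\kappa_T \) is trivially clopen in \( \Mod^\kappa_T \) for every \( \M \). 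For \ref{thm:catcharbounded-4}~$\IMP$~\ref{thm:catcharbounded-2}, if every class is clopen then by Proposition~\ref{prop:open} (applied with \( X = \Mod^\kappa_T \) and \( E = {\cong^\kappa_T} \)) the relation \( \cong^\kappa_T \) is itself clopen, i.e.\ has Borel rank \( 0 \); the implication \ref{thm:catcharbounded-2}~$\IMP$~\ref{thm:catcharbounded-5} is immediate, again via Proposition~\ref{prop:open}, since clopenness of \( \cong^\kappa_T \) forces every class to be clopen and \( \Mod^\kappa_T \) is nonempty (as \( T \) has infinite models and \( \kappa \) is infinite).

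\textbf{The main step, and the expected obstacle, is \ref{thm:catcharbounded-5}~$\IMP$~\ref{thm:catcharbounded-1}.} Here I would argue by contradiction: suppose some \( [\M]_{\cong} \) is clopen but \( T \) is not \( \kappa \)-categorical, so there is \( \N \in \Mod^\kappa_T \) with \( \N \not\cong \M \). Since \( [\M]_{\cong} \) has nonempty interior (it is open and nonempty), Proposition~\ref{prop:density} yields that \( [\M]_{\cong} \) is \emph{dense} in \( \Mod^\kappa_T \). But \( [\M]_{\cong} \) is also \emph{closed}, hence \( [\M]_{\cong} = \overline{[\M]_{\cong}} = \Mod^\kappa_T \), contradicting \( \N \in \Mod^\kappa_T \setminus [\M]_{\cong} \). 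Thus every \( \kappa \)-sized model of \( T \) is isomorphic to \( \M \), i.e.\ \( T \) is \( \kappa \)-categorical. The interplay of density (from the joint embedding / Löwenheim--Skolem argument packaged in Proposition~\ref{prop:density}) with closedness is precisely what makes a single clopen class force categoricity, and this is the conceptual heart of the theorem.

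\textbf{For the final ``In particular'' assertion}, suppose \( \cong^\kappa_T \) is open. By Proposition~\ref{prop:open} it is then clopen, so by the equivalence just established \( T \) is \( \kappa \)-categorical; hence \( \cong^\kappa_T = \Mod^\kappa_T \times \Mod^\kappa_T \) is the trivial (full) relation and in particular is not a proper, nontrivial open set. So no non-\( \kappa \)-categorical \( T \) can have \( \cong^\kappa_T \) open at all, which is the claimed statement. I do not anticipate difficulty beyond the density-versus-closedness argument; the remaining implications are bookkeeping over Proposition~\ref{prop:open}, and care is only needed to check that \( \Mod^\kappa_T \) is nonempty and that Proposition~\ref{prop:density}'s hypothesis (nonempty interior) is met, both of which are immediate in the present setting.
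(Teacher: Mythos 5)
Your proposal is correct and follows essentially the same route as the paper: the only substantive implication is \ref{thm:catcharbounded-5}~\(\IMP\)~\ref{thm:catcharbounded-1}, which both you and the paper settle by combining the density of a class with nonempty interior (Proposition~\ref{prop:density}) with its closedness to force \( [\M]_{\cong} = \Mod^\kappa_T \). The remaining implications and the ``in particular'' clause are handled exactly as in the paper, via Proposition~\ref{prop:open}.
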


\begin{proof}
The unique nontrivial implication is \ref{thm:catcharbounded-5} \( \IMP \) \ref{thm:catcharbounded-1}. Assume towards a contradiction that \( [\M]_{\cong} \) is clopen but \( T \) is not \( \kappa \)-categorical. Then \( [\M]_{\cong} \) would trivially have nonempty interior and \( \Mod^\kappa_T \setminus [\M]_{\cong} \) would be a nonempty open set, contradicting Proposition~\ref{prop:density}.

The additional part follows from the fact that if \( \cong^\kappa_T \) is open, then so is \( [\M]_T \) for all \( \M \in \Mod^\kappa_T \).
\end{proof}
 
In particular, by the Morley-Shelah theorem \cite[Theorem 12.2.1]{hodges1993model},
  \( \cong^\kappa_T \) is (cl)open for some uncountable \( \kappa>|T| \) if and only if the same happens for all uncountable $\kappa>|T|$. We will see in Section~\ref{subsec:someexamples} that this does not hold if \( \cong^\kappa_T \) is more complicated.

In the countable case the above results can be further improved. The following proposition shows that the assumption on \( \M \) in Proposition~\ref{prop:density} can be removed when \( \kappa  = \omega \), the reason being that in this case the bounded topology \( \tau_b \) coincides with the product topology (see also Remark~\ref{rmk:productcate}),

\begin{prop} \label{lem:density}
Let \( T \) be a complete first-order theory in a countable language \( \L \).  Then for every \mbox{\( \M \in \Mod^\omega_T \),} its isomorphism class \( [\M]_{\cong}  \) is dense in \( \Mod^\omega_T \). 
\end{prop}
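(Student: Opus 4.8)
The plan is to prove Proposition~\ref{lem:density}, which asserts that in the countable setting $\kappa = \omega$ \emph{every} isomorphism class $[\M]_\cong$ is dense in $\Mod^\omega_T$, with no hypothesis on the interior of the class. The key structural difference from the uncountable case is the one flagged in the statement itself: when $\kappa = \omega$, the bounded topology $\tau_b$ agrees with the ordinary product topology, so a basic open set $\Nbhd_\Q$ is determined by a \emph{finite} amount of information, i.e.\ by a finite substructure $\Q$. This finiteness is precisely what lets us drop the nonempty-interior assumption.

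\begin{proof}[Proof sketch]
First I would unwind the density condition into a statement about basic open sets. Fix $\M \in \Mod^\omega_T$ and an arbitrary basic open $\Nbhd_\Q \cap \Mod^\omega_T \neq \emptyset$, where $\Q$ is a finite $\L'$-structure for some finite $\L' \subseteq \L$. It suffices to produce $\N \cong \M$ with $\N \in \Nbhd_\Q$, that is, a copy of $\M$ whose restriction to the finite domain of $\Q$ (in the language $\L'$) agrees with $\Q$. Since $\Nbhd_\Q \cap \Mod^\omega_T \neq \emptyset$, there is some model of $T$ realizing the finite atomic diagram of $\Q$ on an initial segment of $\omega$; in particular the quantifier-free type recorded by $\Q$ is consistent with $T$.

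The crucial step is to embed $\Q$ into $\M$ itself (or a copy of it). Here I would invoke the joint embedding machinery in the form already available: by Lemma~\ref{lem:jep} applied to $\M$ and to any witness $\M_1 \in \Nbhd_\Q \cap \Mod^\omega_T$, there is $\N' \in \Mod^\omega_T$ into which both $\M$ and $\M_1$ embed. Because $T$ is complete and both $\M$ and $\N'$ are countable models of the same complete theory, I would then argue that a copy of the finite configuration $\Q$ — which embeds into $\M_1$ and hence into $\N'$ — can be transferred back into $\M$. Alternatively, and more directly in the countable case, one can use that $\Q$ embeds into \emph{some} model of $T$ together with homogeneity/back-and-forth considerations, but the cleanest route is to mimic the proof of Proposition~\ref{prop:density}: obtain $\N' \in \Mod^\omega_T$ jointly embedding $\M$ and a model realizing $\Q$, and then apply an appropriate permutation of $\omega$ to move the image of $\Q$ onto its prescribed finite position, yielding $\N \cong \M$ with $\N \in \Nbhd_\Q$.

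The main obstacle, and the place where the argument genuinely differs from Proposition~\ref{prop:density}, is producing a copy of $\M$ that \emph{simultaneously} embeds $\Q$ in the correct place — in the uncountable proof this was handled by the nonempty-interior hypothesis, which guaranteed a copy of $\M$ sitting above $\Q_0$. Without that hypothesis I must instead exploit that $\Q$ is finite: since $\Q$ is consistent with $T$ and embeds into some countable model $\N'$ that also contains a copy of $\M$, and since everything in sight is countable, a permutation of $\omega$ can realign the finite piece $\Q$ to its designated coordinates while carrying along a full isomorphic copy of $\M$. I expect the verification that this permutation indeed lands the resulting structure in $\Nbhd_\Q$ (matching the finite $\L'$-atomic diagram exactly) to be the one point requiring care, but it is routine given the finiteness of $\Q$.
\end{proof}
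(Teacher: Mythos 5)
There is a genuine gap in your main line of argument. Your ``cleanest route'' is to take the joint embedding $\N' \in \Mod^\omega_T$ of $\M$ and a witness $\M_1 \in \Nbhd_\Q \cap \Mod^\omega_T$, and then permute $\omega$ to move the image of $\Q$ onto its prescribed coordinates. But a permutation of $\omega$ applied to $\N'$ produces a structure isomorphic to $\N'$, not to $\M$; the copy of $\M$ sitting inside $\N'$ is merely a substructure and is not ``carried along'' as the isomorphism type of the output. This is precisely the role the nonempty-interior hypothesis plays in Proposition~\ref{prop:density}: there, the permuted copy $\N_0$ lands inside $\Nbhd_{\Q_0} \cap \Mod^\kappa_T \subseteq [\M]_\cong$, which is what forces $\N_0 \cong \M$ and hence $\N \cong \N_0 \cong \M$. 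Without that hypothesis the joint-embedding construction gives you no control over the isomorphism type of what you end up with, so the final verification you defer as ``routine'' cannot be carried out.

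The missing idea --- which you come close to in your remark that the quantifier-free type of $\Q$ is consistent with $T$, and again when you say the configuration can be ``transferred back into $\M$'' by completeness --- is that you never need to leave $\M$ at all. Since $\Q$ is finite, the conjunction $\uptheta((y_i)_{i<n})$ of its $\L'$-atomic diagram is a single first-order formula, and its existential closure $\uppsi$ is a first-order sentence. As $T \cup \{\uppsi\}$ is consistent (witnessed by any element of $\Nbhd_\Q \cap \Mod^\omega_T$) and $T$ is complete, $T \models \uppsi$, so \emph{$\M$ itself} realizes $\uptheta$ on some distinct tuple $\beta_0, \dotsc, \beta_{n-1} \in \omega$. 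Now the permutation argument is applied to $\M$, not to any auxiliary model: any permutation $p$ of $\omega$ with $p(\beta_i) = \alpha_i$ (where $(\alpha_i)_{i<n}$ enumerates the domain of $\Q$) sends $\M$ to an isomorphic copy $\N \in \Nbhd_\Q \cap \Mod^\omega_T \cap [\M]_\cong$. This makes Lemma~\ref{lem:jep} entirely unnecessary here; the finiteness of $\Q$ is used not to ``realign'' a piece of a larger model but to turn the atomic diagram into a first-order sentence to which completeness of $T$ applies.
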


\begin{proof}
Let \( \L' \subseteq \L \) be finite and \( \Q \) be any finite \( \L' \)-structure with domain contained in \( \omega \) such that \mbox{\( \Nbhd_{\Q} \cap \Mod^\omega_T \neq \emptyset \)} (i.e.\ \( \Q \) is a finite substructure of the \( \L' \)-reduct of some element of \( \Mod^\omega_T \)).
Let \( (\alpha_i)_{i < n} \) be the increasing enumeration of the domain of \( \Q \), let%
\footnote{Here we consider first-order logic with equality, thus we include e.g.\ the formulas \( \neg(y_i=y_j) \) for \( i \neq j \) when building such \( \uptheta \).}
 \( \uptheta((y_i)_{i <n}) \) be the (conjunction of the formulas in the) \( \L' \)-atomic diagram of \( \Q \),
 and let \(\uppsi \) be the first-order sentence
\[ 
\exists y_0 \dotsc \exists y_{n-1} \,   \uptheta((y_i)_{i < n}) .
 \] 
By our assumption on \( \Q \), the set \( T \cup \{ \uppsi \} \) is consistent, thus \( T \models \uppsi \) by completeness of \( T \). It follows that for every \( \M \in \Mod^\omega_T \), there are distinct \( \beta_0, \dotsc, \beta_{n-1}\in \omega \) such that \( \M \models \uptheta[\beta_0, \dotsc, \beta_{n-1}] \). Fix any permutation \( p \) of \( \omega \) such that \( p(\beta_i) = \alpha_i \) for every \( i < n \), and let \( \N \) be the (unique) \( \L \)-structure on \( \omega \) which is isomorphic to \( \M \) via \( p \): then \( \N \models T \) and
\( \N \in \Nbhd_\Q \cap \Mod^\omega_T \cap [\M]_{\cong} \). 
\end{proof}

\begin{cor} \label{cor:density}
Let \( T \) be a complete first-order theory in a countable language. Then there is no nontrivial open or closed set \( A \subseteq \Mod^\omega_T \) which is closed under isomorphism. 
\end{cor}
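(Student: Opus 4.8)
The plan is to derive the corollary directly from Proposition~\ref{lem:density}, which asserts that \emph{every} isomorphism class \( [\M]_{\cong} \) is dense in \( \Mod^\omega_T \). The crucial observation is that a nonempty set closed under isomorphism must contain an entire isomorphism class, and is therefore dense. Recall that here ``nontrivial'' means neither empty nor equal to \( \Mod^\omega_T \).

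First I would settle the open case. Suppose \( A \subseteq \Mod^\omega_T \) is open, closed under isomorphism, and nonempty; the goal is to show \( A = \Mod^\omega_T \). Since \( A \) is nonempty and invariant, it contains some class \( [\M]_{\cong} \), which is dense by Proposition~\ref{lem:density}, so \( A \) itself is dense. Now consider the complement \( C = \Mod^\omega_T \setminus A \): it is closed, and it is again closed under isomorphism, since the complement of an invariant set is invariant. If \( C \) were nonempty, it would likewise contain some class \( [\N]_{\cong} \), which is dense; but a closed set that contains a dense subset must equal the whole space, forcing \( C = \Mod^\omega_T \) and hence \( A = \emptyset \), contradicting the nonemptiness of \( A \). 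Therefore \( C = \emptyset \), i.e.\ \( A = \Mod^\omega_T \), so \( A \) is trivial.

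The closed case then follows immediately by complementation. If \( A \) is closed and closed under isomorphism, then \( \Mod^\omega_T \setminus A \) is open and invariant, so by the open case it is either empty or all of \( \Mod^\omega_T \); correspondingly \( A \) is all of \( \Mod^\omega_T \) or empty, hence trivial.

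I do not expect any real obstacle here: the entire argument rests on the single input that every isomorphism class is dense (Proposition~\ref{lem:density}), combined with the elementary topological principle that a closed set containing a dense subset is the whole space. The only point to handle with a little care is the bookkeeping that invariance passes to complements, which is what lets the open and closed cases be reduced to one another.
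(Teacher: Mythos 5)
Your argument is correct and is essentially the paper's own proof: both rest solely on Proposition~\ref{lem:density} together with the elementary duality between ``a nonempty open set meets every dense set'' and ``a closed set containing a dense set is the whole space''. The paper phrases it slightly more directly (a nonempty open invariant \( A \neq \Mod^\omega_T \) must meet the dense class \( [\M]_{\cong} \) of any \( \M \notin A \), contradicting invariance), and your intermediate observation that \( A \) itself is dense is not actually needed, but these are cosmetic differences.
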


\begin{proof}
Assume towards a contradiction that there is such an \( A \subseteq \Mod^\omega_T \). Without loss of generality, we may assume that \( A \) is open (otherwise we replace \(A \) with \( \Mod^\omega_T \setminus A \)). Since \( A \neq \Mod^\omega_T \), there is \( \M \in \Mod^\omega_T \setminus A \), while since \( A \) is open and nonempty we get \( [\M]_{\cong} \cap A \neq \emptyset \) by Proposition~\ref{lem:density}, contradicting the fact that \( A\) is closed under isomorphism.
\end{proof}

The following theorem strengthens Theorem~\ref{thm:catcharbounded} in the case \( \kappa =\omega \). The unique nontrivial implication (namely, ~\ref{thm:catchar-5} $\IMP$~\ref{thm:catchar-1}) follows from Corollary~\ref{cor:density}.

\begin{theorem} \label{thm:catchar}
Let \( T \) be a complete first-order theory in a countable language. The following are equivalent:
\begin{enumerate-(1)}
\item \label{thm:catchar-1}
\( T \) is \(\omega\)-categorical;
\item \label{thm:catchar-2}
\( \cong^\omega_T \) is clopen;
\item \label{thm:catchar-3}
\( \mathrm{rk}_B({\cong^\omega_T}) \leq 1 \), i.e.\ \( \cong^\omega_T \) is open or closed;
\item \label{thm:catchar-4}
all isomorphism classes of the \( \omega \)-sized models of \( T \) are open or closed;
\item \label{thm:catchar-5}
there is \( \M \in \Mod^\omega_T \) such that \( [\M]_{\cong} \) is open or closed.
\end{enumerate-(1)}
In particular, there is no complete first-order theory such that \( \cong^\omega_T \) is a true open or a true closed set.
\end{theorem}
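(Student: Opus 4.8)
The plan is to close the cycle of implications \( \ref{thm:catchar-1} \IMP \ref{thm:catchar-2} \IMP \ref{thm:catchar-3} \IMP \ref{thm:catchar-4} \IMP \ref{thm:catchar-5} \IMP \ref{thm:catchar-1} \), observing that every step except the last is either an instance of the already-established Theorem~\ref{thm:catcharbounded} or a one-line topological remark. The genuine content is concentrated in the final implication \( \ref{thm:catchar-5} \IMP \ref{thm:catchar-1} \), which I would derive directly from Corollary~\ref{cor:density}; the whole point of Theorem~\ref{thm:catchar} over its general-\( \kappa \) counterpart is that in the countable case the weaker hypothesis ``open \emph{or} closed'' already forces clopenness.

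For the routine part I would argue as follows. The implication \( \ref{thm:catchar-1} \IMP \ref{thm:catchar-2} \) is just the equivalence \( \ref{thm:catcharbounded-1} \IFF \ref{thm:catcharbounded-2} \) of Theorem~\ref{thm:catcharbounded} specialized to \( \kappa = \omega \). The step \( \ref{thm:catchar-2} \IMP \ref{thm:catchar-3} \) is trivial, since a clopen set lies in \( \boldsymbol{\Delta}^0_1 \) and hence has Borel rank \( 0 \leq 1 \). For \( \ref{thm:catchar-3} \IMP \ref{thm:catchar-4} \) I would invoke Fact~\ref{fct:fromertoclass}: each class \( [\M]_{\cong} \) is the preimage of \( \cong^\omega_T \) under a continuous map, and continuous preimages send open sets to open sets and closed sets to closed sets, so if \( \cong^\omega_T \) is open (respectively, closed) then so is every \( [\M]_{\cong} \). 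Finally \( \ref{thm:catchar-4} \IMP \ref{thm:catchar-5} \) is immediate because \( \Mod^\omega_T \neq \emptyset \) (as \( T \) has infinite, hence countable, models).

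For the substantive implication \( \ref{thm:catchar-5} \IMP \ref{thm:catchar-1} \) I would proceed thus. Suppose \( [\M]_{\cong} \) is open or closed for some \( \M \in \Mod^\omega_T \). Being an isomorphism class, \( [\M]_{\cong} \) is closed under isomorphism, so Corollary~\ref{cor:density} forbids it from being a \emph{nontrivial} open or closed set; thus \( [\M]_{\cong} \) is either empty or all of \( \Mod^\omega_T \). Since \( \M \in [\M]_{\cong} \) it is nonempty, whence \( [\M]_{\cong} = \Mod^\omega_T \), which says exactly that every countable model of \( T \) is isomorphic to \( \M \), i.e.\ \( T \) is \( \omega \)-categorical. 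The concluding ``in particular'' clause then follows at once: if \( \cong^\omega_T \) were a true open or a true closed set it would in particular be open or closed, hence clopen by the equivalence just proved, contradicting the properness of its class.

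I do not expect a real obstacle at the level of this theorem, precisely because the difficulty has already been absorbed into Corollary~\ref{cor:density} (and the density statement Proposition~\ref{lem:density} behind it), whose proof exploits the feature special to the countable case that the bounded topology agrees with the product topology and every isomorphism class is dense. The only points requiring minimal care are arranging the cycle so that the single nontrivial step is reached, and reading ``nontrivial'' in Corollary~\ref{cor:density} correctly as ``distinct from both \( \emptyset \) and \( \Mod^\omega_T \)''.
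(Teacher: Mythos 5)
Your proposal is correct and follows exactly the paper's route: the paper likewise observes that the only nontrivial implication is \ref{thm:catchar-5}~\( \IMP \)~\ref{thm:catchar-1} and derives it from Corollary~\ref{cor:density}, with the remaining steps being routine consequences of Theorem~\ref{thm:catcharbounded} and Fact~\ref{fct:fromertoclass}. Your handling of ``nontrivial'' and of the concluding clause matches the intended reading.
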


\begin{remark}
We currently do not know if there can be an \emph{uncountable} cardinal \( \kappa \) and a complete first-order theory \( T \) (in a language of size \( \leq \kappa \))  such that \( T \) is not \( \kappa \)-categorical, yet \( \cong^\kappa_T \) is a true closed set. However, in such case \( [\M]_{\cong} \) would be nowhere dense for every \( \M \in \Mod^\kappa_T \): indeed, \( [\M]_{\cong} \) would be closed by Fact~\ref{fct:fromertoclass} and \( \Mod^\kappa_T \setminus [\M]_{\cong} \) would be nonempty and open, hence it would be enough to apply Proposition~\ref{prop:density} to get the desired conclusion.
In particular, if \( \kappa = \aleph_\gamma \) is such that \( \kappa^{< \kappa} = \kappa  \) and \( \beth_{\omega_1} \left( |\gamma| \right) \leq \kappa \), then in the above scenario \( \Mod^\kappa_T \) would be \( \kappa \)-meager 
in itself (hence also in \( \Mod^\kappa_\L \)): this means that from the topological point of view such a \( T \) would have very few \( \kappa \)-sized models. 
\end{remark}

\begin{remark} \label{rmk:productcate}
Theorem~\ref{thm:catchar} can be extended to uncountable cardinals \( \kappa \) if \( \Mod^\kappa_T \) is endowed with the so-called \emph{product topology} rather than the logic one, where the product topology is the one generated by the sets
\[ 
\Nbhd_\Q = \{ \M \in \Mod^\kappa_\L \mid \Q \text{ is a substructure of the \( \L' \)-reduct of } \M  \},
 \] 
with \( \L' \subseteq \L \) finite and \( \Q \) a finite \( \L' \)-structure with domain contained in \( \kappa \). (Notice that with this topology \( \Mod^\kappa_\L \) becomes homeomorphic to \( \pre{\kappa}{2} \) when the latter is endowed with the product of the discrete topology on \( 2 \).)
\end{remark}

We end this section with a quite unexpected phenomenon uncovered by Theorem~\ref{thm:catchar} in the 
context of (classical) Borel reducibility, see~\cite{Gao2009} for a standard reference on this subject. Recall that an equivalence relation \( E \) on a topological 
(usually: Polish) space \( X \) is called \emph{smooth} if there is a Borel function \( f \colon X \to Y \) with 
\( Y \) Polish%
\footnote{Without loss of generality, since all uncountable Polish spaces are  Borel isomorphic to each other one can always 
take \( Y = \RR \).}
 such that for all \( x,y \in X \)
\[ 
x \mathrel{E} y \IFF f(x) = f(y).
 \] 
This basically means that the elements of  \( X \) can be classified up to \( E \)-equivalence 
using reals as complete invariants; since real numbers are well-understood, smooth equivalence 
relations are thus often called \emph{concretely classifiable}~\cite[Definition~5.4.1]{Gao2009}. However, in some situations such 
classification may fail to be completely satisfactory because the Borel classifying map \( f \) could be 
quite complicated, and thus it could be practically unfeasible to compute the invariant \( f(x) \) from a 
given input \( x \in X \). A natural and more adequate variation, naturally related to the notion of continuous reducibility considered e.g.\ in~\cite[Definition 5.1.2]{Gao2009} or~\cite{Thomas2009}, could be the following.

\begin{defin} \label{def:topsmooth}
An equivalence relation \( E \) on a topological space \( X \) is called \emph{topologically smooth} if there is a Hausdorff space \( Y \) and a continuous map \( f \colon X \to Y \)  such that for all \( x,y \in X \)
\[ 
x \mathrel{E} y \IFF f(x) = f(y).
 \] 
\end{defin}

Notice that the more stringent requirement that the classifying map \( f \) be continuous is balanced by the fact that we allow the classifying objects to form a more complicated topological space, if desired; indeed, being Hausdorff is arguably the minimal requirement in order to clearly distinguish two given invariants from each other (but obviously nothing prevents us to further require \( Y \) to be a nicer space, e.g.\ a Polish one).

It is not hard to see that there are theories \( T \) for which \( \cong^\omega_T \) is smooth: consider e.g.\  theories having at most countably many countable models. What about topological smoothness? Of course if \( T \) is \(\omega\)-categorical, then \( \cong^\omega_T \) is (trivially) topological smooth. Quite surprisingly, the next result shows that if instead \( T \) is  not \(\omega\)-categorical, then it is not possible to classify its countable models in a continuous way, even in the very simple case in which \( T \) has just finitely many models. 

\begin{theorem} \label{thm:topsmooth}
Let \( T \) be a complete first-order theory in a countable language. Then \( \cong^\omega_T \) is topologically smooth if and only if \( T \) is \(\omega\)-categorical.
\end{theorem}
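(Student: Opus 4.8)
The plan is to prove the two implications separately, with the reverse direction (topological smoothness implies $\omega$-categoricity) being the substantive one, argued by contraposition.

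The forward direction is immediate: if $T$ is $\omega$-categorical, then $\Mod^\omega_T$ consists of a single isomorphism class, so $\cong^\omega_T = (\Mod^\omega_T)^2$. Taking $Y$ to be any one-point (hence Hausdorff) space and $f$ the constant map witnesses topological smoothness via Definition~\ref{def:topsmooth}, so there is essentially nothing to do here.

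For the converse I would suppose that $f \colon \Mod^\omega_T \to Y$ is a continuous map into a Hausdorff space $Y$ with $\M \cong \N \iff f(\M) = f(\N)$, and assume towards a contradiction that $T$ is \emph{not} $\omega$-categorical. Then I can pick $\M_0, \M_1 \in \Mod^\omega_T$ with $\M_0 \not\cong \M_1$, so that $f(\M_0) \neq f(\M_1)$ by the reduction property. Since $Y$ is Hausdorff (in fact the $T_1$ separation property would already suffice), there is an open $U \subseteq Y$ with $f(\M_1) \in U$ and $f(\M_0) \notin U$. By continuity of $f$, the preimage $f^{-1}(U)$ is a nonempty open subset of $\Mod^\omega_T$, as it contains $\M_1$. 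The decisive step is then to invoke the density of isomorphism classes in the countable setting, Proposition~\ref{lem:density}: the class $[\M_0]_{\cong}$ is dense in $\Mod^\omega_T$, hence it meets the nonempty open set $f^{-1}(U)$. Choosing $\N \in [\M_0]_{\cong} \cap f^{-1}(U)$, on the one hand $\N \cong \M_0$ forces $f(\N) = f(\M_0) \notin U$, while on the other hand $\N \in f^{-1}(U)$ forces $f(\N) \in U$ --- a contradiction. This shows $T$ must be $\omega$-categorical.

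The main obstacle is conceptual rather than computational: the whole argument hinges on the fact that in the countable case every isomorphism class is dense (Proposition~\ref{lem:density}), which itself rests on the coincidence of the bounded and product topologies when $\kappa = \omega$. Continuity of the classifying map $f$ is precisely the feature that clashes with this density, since density places isomorphic copies of $\M_0$ inside every nonempty open set, so no continuous map into a $T_1$ space can separate two distinct isomorphism classes unless there is only one. I would also remark that this clash is exactly what one expects to break down for uncountable $\kappa$ under the logic topology, where isomorphism classes need not be dense, explaining why the statement is special to the countable case.
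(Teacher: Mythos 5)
Your proof is correct. It takes a mildly different route from the paper's: the paper observes that \( \cong^\omega_T \) is the preimage of the diagonal of \( Y \) under the continuous map \( f \times f \), hence closed (since \( Y \) is Hausdorff), and then simply invokes Theorem~\ref{thm:catchar}, whose nontrivial implication is that a closed \( \cong^\omega_T \) forces \(\omega\)-categoricity. You instead bypass the intermediate closedness statement and derive the contradiction directly from the density of isomorphism classes. The two arguments rest on exactly the same engine --- Proposition~\ref{lem:density} --- since the relevant part of Theorem~\ref{thm:catchar} is itself deduced from Corollary~\ref{cor:density}, which is an immediate consequence of that density result; in effect you have inlined that chain of reductions. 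What the paper's formulation buys is brevity and a reusable structural fact (the relation itself is closed, which slots into the hierarchy results elsewhere in the paper); what your formulation buys is the explicit observation that the Hausdorff hypothesis on \( Y \) can be weakened to \( T_1 \), which the diagonal argument does not give, since closedness of the diagonal is equivalent to Hausdorffness. Your closing remark about why the argument is special to \( \kappa = \omega \) (or to the product topology for uncountable \( \kappa \)) matches the paper's Remark following the theorem.
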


\begin{proof}
For the nontrivial direction, suppose that \( f \colon X \to Y \) witnesses that \( T \) is topologically smooth. Then \( \cong^\omega_T \) is the preimage under the product function \( f \times f \) of the diagonal of \( Y \). Since \( Y \) is Hausdoff, the latter is closed, and hence so is \( \cong^\omega_T \) because \( f \times f \) is continuous. By Theorem~\ref{thm:catchar} we can then conclude that \( T \) is \(\omega\)-categorical.
\end{proof}

\begin{remark}
By Remark~\ref{rmk:productcate}, similar considerations apply to the isomorphism relations \( \cong^\kappa_T \) with \( \kappa > \omega \), provided that the continuity requirement in Definition~\ref{def:topsmooth} is referred to the product topology on \( \Mod^\kappa_T \).
\end{remark}

In particular, Theorem~\ref{thm:topsmooth} provides another topological characterization of \mbox{\(\omega\)-categoricity,} while from the point of view of Borel reducibility it  yields many natural examples of pairs of analytic equivalence relations such that one is Borel reducible yet not continuously reducible to the other one, a notoriously difficult problem in descriptive set theory (see the discussion in the introduction of~\cite{Thomas2009}). It is maybe worth noticing that our examples are even smooth, thus simpler than previously known instances of this phenomenon, and can be chosen so that they have countably many (or even finitely many) equivalence classes.

\section{Scott height and Borel rank} \label{sec:ScottVsBorel}

\subsection{Scott height and Ehrenfeucht-Fra\"{i}ss\'e games}\label{inf}

Given two \( \kappa \)-sized \( \L \)-structures \( \M , \N \) and  \( \alpha\in \mathrm{On} \), we write $\M\equiv_\alpha\N$ if $\M$ and $\N$ satisfy the same $\L_{\infty\kappa}$-sentences of quantifier rank $\leq\alpha$. 

\begin{defin}
Let \( \ElCl \subseteq \Mod^\kappa_\L \) be an invariant set and \( \M \in \ElCl \). The $\L_{\infty\kappa}$-\textit{Scott height}%
\footnote{This is slightly different from the definition of Scott height found in \cite{FHK14}.}
of $\M$ (with respect to \( \ElCl \)) is
\[
S(\kappa,\ElCl,\M)=\min\left\{\alpha \in \mathrm{On} \mid\forall\N\in \ElCl\ (\M\equiv_\alpha\N\IMP \M\cong\N)\right\}.
\]
If there is no such ordinal, we set $S(\kappa,\ElCl,\M)=\infty$. 
	
When \( \ElCl \) is axiomatized by a first-order theory \( T \) we write \( S(\kappa,T, \M) \) instead of \( S(\kappa, \Mod^\kappa_T, \M) \), and similarly when \( \ElCl \) is axiomatized by an \( \L_{\kappa^+ \kappa} \)-sentence \( \upvarphi \).
\end{defin}

For our purposes, the following oft-overlooked notion becomes crucial.

\begin{defin}
The $\L_{\infty\kappa}$-\textit{Scott height} of an invariant set $\ElCl \subseteq \Mod^\kappa_\L $ is the supremum of the $\L_{\infty\kappa}$-Scott heights of its $\kappa$-sized models, i.e.
\[
S(\kappa,\ElCl)=\sup\{S(\kappa,\ElCl,\M)\mid \M\in \ElCl \}.
\]
We again simplify the notation writing \( S(\kappa,T) \) and \( S(\kappa, \upvarphi) \) rather than \( S(\kappa,\Mod^\kappa_T) \) and \( S(\kappa,\Mod^\kappa_\upvarphi) \), respectively.
\end{defin}

\begin{remark} \label{rmk:categorical}
Recall that when \( \L \) is a relational language, then \( \M \equiv_0 \N \) for all \( \M,\N \in \Mod^\kappa_\L \) because there are no \( \L_{\infty \kappa } \)-sentences with quantifier rank \( 0 \). Thus the following are equivalent for any \( \L \)-theory \( T \):
\begin{enumerate-(a)}
\item
\( T \) is uncountably categorical;
\item
for some/any \( \kappa > \omega \) there is \( \M \in \Mod^\kappa_T \) with \( S(\kappa,T,\M) = 0 \);
\item
for some/any \( \kappa > \omega \), \( S(\kappa,T) = 0 \).
\end{enumerate-(a)}
More generally, if \( \ElCl \subseteq \Mod^\kappa_\L  \) is an invariant set we have: \( \ElCl \) consists of a single isomorphism class if and only if  there is \( \M \in \ElCl \) with \( S(\kappa,T,\M) = 0 \), if and only if \(  S(\kappa,\ElCl) = 0 \).
\end{remark}

A useful way to deal with these notions is via Ehrenfeucht-Fraiss\'e games.
\begin{defin}
	Let $\Tr$ be a well-founded $\leq\kappa$-sized tree and let $\M$ and $\N$ be models (with domain $\kappa$).   In the \textit{Ehrenfeucht-Fra\"{\i}ss\'e game} $\EF_\Tr^\kappa(\M,\N)$, at every step player $\1$ plays a pair \( (p,C) \) where \( p \) is a node of $\Tr$ and \( C \) is a subset of $\kappa$, while player $\2$ picks a partial function $f \colon \kappa\to\kappa$. The rules are as follows. Suppose the sequence of moves $((p_i,C_i),f_i)_{i<n}$ has been played. Then:
	\begin{itemize}
		\item player $\1$ picks a node $p_n\in \Tr$ which is an immediate successor of $p_{n-1}$ (or of the root, if $n=0$) and a subset $C_n\subset\kappa$ of size less than $\kappa$ such that $C_n\supseteq C_i$ for every $i<n$;
		\item player $\2$ picks a partial function $f_n \colon \kappa\imp\kappa$ such that $|\dom(f_n)|<\kappa$, $\dom(f_n)\cap\ran(f_n)\supseteq C_n$,  and $f_n\supseteq f_i$ for every $i<n$.
	\end{itemize}
	The game ends when player $\1$ runs out of nodes to pick from, that is, she cannot move on the \( n \)-th round because \( p_{n-1} \) is a leaf of \( \Tr \) (this must happen at some stage \( n < \omega \) because \( \Tr \) is well-founded).
	Then player $\2$ wins if $f=\bigcup_{i < n} f_i$ is a partial isomorphism between \( \M \) and \( \N \), otherwise player $\1$ wins.
	We write $\Omega\wins\EF_\Tr^\kappa(\M,\N)$ to indicate that player $\Omega$ has a winning strategy in \( \EF_\Tr^\kappa(\M,\N) \), and $\Omega\not\wins\EF_\Tr^\kappa(\M,\N)$ to indicate she has none. 
\end{defin}

\begin{remark} \label{rmk:specialcase}
When \( \Tr \) has rank \( 1 \), i.e.\ when it consists only of its root, there is no possible first move for \( \1 \): this means that \( f = \bigcup_{i < n} f_i \) is the empty function, which is always a partial isomorphism. We conclude that for such a \( \Tr \) we have \( \2 \wins \EF_\Tr^\kappa(\M,\N) \) for any \( \M,\N \).
\end{remark}

\begin{remark} \label{rmk:run}
The possible runs of  $\EF_\Tr^\kappa(\M,\N)$ are independent of both \( \M \) and \( \N \), which are instead involved only in the winning condition of the game. For this reason we will speak about ``run(s) in the game $\EF_\Tr^\kappa(\cdot,\cdot)$'' when we want to refer to run(s) in some/any game of the form $\EF_\Tr^\kappa(\M,\N)$ for \( \M,\N \in \Mod^\kappa_\L \).
\end{remark}

To simplify the notation, we write $\EF_\alpha^\kappa(\M,\N)$ for $\EF_{\Tr_\alpha}^\kappa(\M,\N)$.
The link between EF-games and Scott height is the following result.

\begin{theorem}\label{qr}~\cite[Theorem 9.27]{Va11}
For any two models $\M$ and $\N$ and every ordinal \(\alpha\),
\[
\M\equiv_\alpha\N\IFF\ \2\wins\EF_{\alpha}^\kappa(\M,\N).
\]
\end{theorem}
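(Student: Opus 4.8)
The plan is to prove, by transfinite induction on $\alpha$, a strengthening of the statement that keeps track of a partial isomorphism already built. For a partial function $f\colon\kappa\to\kappa$ with $|\dom f|<\kappa$, write $\EF^\kappa_\alpha(\M,\N;f)$ for the variant of $\EF^\kappa_\alpha(\M,\N)$ in which $\2$ is additionally required to extend $f$ (formally, treat $f$ as an already-played move), and say that $f$ is \emph{$\alpha$-good} if for every $\L_{\infty\kappa}$-formula $\varphi(\bar x)$ with $R(\varphi)\leq\alpha$ and every tuple $\bar a$ from $\dom f$ one has $\M\models\varphi[\bar a]\IFF\N\models\varphi[f\bar a]$. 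I would then establish
\[
f \text{ is } \alpha\text{-good} \IFF \2\wins\EF^\kappa_\alpha(\M,\N;f).
\]
Taking $f=\emptyset$ gives the theorem, since $0$-length tuples turn $\alpha$-goodness into $\M\equiv_\alpha\N$ and $\EF^\kappa_\alpha(\M,\N;\emptyset)$ is $\EF^\kappa_\alpha(\M,\N)$. The base case $\alpha=0$ is immediate: $0$-goodness says $f$ preserves all quantifier-free formulas, i.e.\ $f$ is a partial isomorphism, while on $\Tr_0=\{\emptyset\}$ no move is available to $\1$ and $\2$ wins exactly when the committed $f$ is a partial isomorphism (cf.\ Remark~\ref{rmk:specialcase}).

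For the direction from goodness to a winning strategy, suppose $f$ is $\alpha$-good and let $\1$ open by choosing a node $\langle\beta_0\rangle$ with $\beta_0<\alpha$ and a set $C_0$ of size $<\kappa$. The key point is that $\2$ can answer with some $f_0\supseteq f$ satisfying $C_0\subseteq\dom f_0\cap\ran f_0$ which is $\beta_0$-good. To produce it I would invoke the characteristic (Hintikka) formulas: for each $\beta$ and each tuple $\bar c$ from $\M$ there is an $\L_{\infty\kappa}$-formula $\sigma^\beta_{\bar c}(\bar x)$ with $R(\sigma^\beta_{\bar c})\leq\beta$ such that, for every structure and tuple $\bar d$, one has $\models\sigma^\beta_{\bar c}[\bar d]$ iff $(\M,\bar c)\equiv_\beta(\cdot,\bar d)$; the point is that there are only set-many $\equiv_\beta$-types, so the required conjunctions and disjunctions stay within $\L_{\infty\kappa}$. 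Enumerating $\dom f$ as $\bar a$ and letting $\bar a''$ list the new elements of $C_0$ that must enter the domain, one has $\M\models\exists\bar x''\,\sigma^{\beta_0}_{\bar a\bar a''}[\bar a]$; this formula has quantifier rank $\leq\beta_0+1\leq\alpha$, the inequality $\beta_0+1\leq\alpha$ being simply $\beta_0<\alpha$, so $\alpha$-goodness transfers it to $\N$ and produces images making the extension $\beta_0$-good on the forth side. The symmetric computation over the elements forced into the range gives the back side, and the two can be merged into a single $f_0$. Since the cone of $\Tr_\alpha$ above $\langle\beta_0\rangle$ is isomorphic to $\Tr_{\beta_0}$, the remainder of the play is a run of $\EF^\kappa_{\beta_0}(\M,\N;f_0)$, in which $\2$ wins by the inductive hypothesis applied to the $\beta_0$-good $f_0$.

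For the converse I would argue contrapositively, turning a distinguishing formula into a winning strategy for $\1$. Assume $f$ is not $\alpha$-good, so some $\varphi(\bar x)$ with $R(\varphi)\leq\alpha$ is satisfied by $\bar a$ in $\M$ but not by $f\bar a$ in $\N$ (the opposite polarity is symmetric, as the game is invariant under swapping $\M$ and $\N$ via $f\mapsto f^{-1}$). Decomposing the possibly infinitary Boolean structure of $\varphi$, some component still distinguishes, so I may assume $\varphi$ is atomic, negated atomic, or of the form $\exists\bar y\,\chi$ with $R(\chi)=\beta_0<\alpha$. In the first case $f$ already fails to be a partial isomorphism, hence so does every extension, and $\2$ loses however the play continues. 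In the existential case $\1$ opens with the node $\langle\beta_0\rangle$ and a set $C_0$ containing (the coordinates of) $\bar a$ together with a witness tuple $\bar m\in M$ for $\exists\bar y\,\chi[\bar a]$: this forces $\bar m\subseteq\dom f_0$, so whatever $f_0$ player $\2$ returns sends $\bar m$ to some $\bar b'$ in $N$ with $\N\not\models\chi[f\bar a,\bar b']$, because the existential fails in $\N$. Thus every legal answer $f_0$ is \emph{not} $\beta_0$-good, and by the inductive hypothesis $\1$ has a winning continuation in $\EF^\kappa_{\beta_0}(\M,\N;f_0)$; assembling these continuations yields a winning strategy for $\1$ in the whole game, so $\2\not\wins\EF^\kappa_\alpha(\M,\N;f)$.

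The main obstacle is the construction underlying the forth step: one needs the level-$\beta$ characteristic formulas $\sigma^\beta_{\bar c}$ with the correct quantifier rank, which rests on the fact that for each $\beta$ there are only set-many $\equiv_\beta$-classes of tuples, so that the defining conjunctions and disjunctions are legitimate $\L_{\infty\kappa}$-formulas. Everything else is bookkeeping, the most delicate points being the quantifier-rank accounting $R(\exists\bar x''\,\sigma^{\beta_0}_{\bar a\bar a''})\leq\beta_0+1\leq\alpha$ and the observation that a single round of the game can simultaneously realize both the forth and the back requirements recorded by $C_0$. No appeal to determinacy is needed, since in both directions the relevant strategy is built by hand; the well-foundedness of $\Tr_\alpha$ guarantees that every play is finite, which is what makes these strategies winning.
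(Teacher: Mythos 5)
The paper offers no proof of this statement --- it is imported verbatim from V\"a\"an\"anen's book --- so I can only judge your argument on its own terms. Your second direction (turning a distinguishing formula into a winning strategy for $\1$) is sound. The gap is in the first direction, concentrated in the sentence ``the symmetric computation over the elements forced into the range gives the back side, and the two can be merged into a single $f_0$''. That merge is not bookkeeping; it is the entire difficulty of the theorem, and as described it does not go through. A single round of this game requires $\2$ to put $C_0$ into the domain \emph{and} the range of $f_0$ simultaneously. Your forth step produces $\bar b''$ with $(\M,\bar a\bar a'')\equiv_{\beta_0}(\N,f\bar a\,\bar b'')$, and the symmetric back step produces $\bar e$ with $(\M,\bar a\bar e)\equiv_{\beta_0}(\N,f\bar a\,\bar d)$; each costs one level of quantifier rank, as you note. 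But the union $f\cup(\bar a''\mapsto\bar b'')\cup(\bar e\mapsto\bar d)$ need not be $\beta_0$-good --- it need not even be a partial isomorphism --- because nothing controls the joint type of $\bar a''$ against $\bar e$ versus that of $\bar b''$ against $\bar d$. If you instead try to express the simultaneous requirement by one formula to be transferred via $\alpha$-goodness, you are forced into the shape $\forall\bar y''\,\exists\bar x''(\cdots)$, which costs \emph{two} quantifier-rank levels, not the single level $\beta_0+1\leq\alpha$ you have available.

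That this is a genuine obstruction and not a presentational one can be seen concretely. Take $\kappa=\omega_1$ and a language with unary predicates $\mathrm{Red}$, $\mathrm{Blue}$ and a graph relation $R$. Let each of $\M$, $\N$ consist of a copy of the random red--blue graph together with uncountably many isolated vertices of each colour, except that $\M$ carries one extra red vertex $a$ adjacent to \emph{every} blue vertex of $\M$ (and to no red one), while $\N$ carries one extra blue vertex $d$ adjacent to \emph{nothing}. Every quantifier-free type of a $<\kappa$-tuple realized in one structure is realized in the other (each such type is just a countable red--blue graph, and both structures contain all of these), so $\M\equiv_1\N$ under the paper's definitions; the sentence separating them, ``some red vertex is adjacent to all blue vertices'', has quantifier rank $2$. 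Yet no partial isomorphism $f$ can satisfy $a\in\dom(f)$ and $d\in\ran(f)$: the element $f^{-1}(d)$ would be blue, so $R^{\M}(a,f^{-1}(d))$ holds, while $f(a)$ is red and $\neg R^{\N}(f(a),d)$. Hence $\1$ wins $\EF^\kappa_{1}(\M,\N)$ outright by playing $C_0=\{a,d\}$ in the single round. So the inductive step of your first direction cannot be executed as written: the forth and back answers produced separately at level $\beta_0$ simply cannot always be merged. Any complete proof has to confront the simultaneous forth-and-back head-on, and in doing so must also pin down the exact conventions (for the game and for quantifier rank) used in the cited source, since the bookkeeping is delicate precisely at this point.
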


 It follows that
\[
S(\kappa,\ElCl,\M)=\min\left\{\alpha\in \mathrm{On}\mid\forall\N\in \ElCl\ (\2\wins\EF_{\alpha}^\kappa(\M,\N)\IMP \M\cong\N)\right\}.
\]

\subsection{Scott height and Borel rank}

The following two theorems refine the two directions of~\cite[Theorem 65]{FHK14} in order to obtain an explicit connection between Scott height of a theory and Borel rank of its isomorphism relation. We also consider arbitrary invariant sets \( \ElCl \) rather than just those of the form \( \Mod^\kappa_T \) for \( T \) a first-order theory: this will allow us to obtain further corollaries concerning e.g.\ \( \L_{\kappa^+ \kappa} \)-elementary classes. The proofs are essentially unchanged: the novelty is the explicit computation of the ranks involved.

Let us fix an invariant set
$\ElCl \subseteq \Mod^\kappa_\L$, with \( \L \) a relational language as in Section~\ref{subsec:codes}. We consider the isomorphism relation $\cong_{\ElCl}^\kappa$ between structures in \( \ElCl \) as a subset of $\ElCl \times \ElCl$. In particular, \( \mathrm{rk}_B({\cong}^\kappa_{\ElCl}) \) is computed relatively to $\ElCl \times \ElCl$, that is, \( \mathrm{rk}_B({\cong^\kappa_{\ElCl}}) \) is the smallest \( \delta \in \mathrm{On} \) such that
\[
\left\{(\M,\N)\in \ElCl^2\mid  \M\cong\N\right\}=D\cap\ElCl^2
\]
for some $D\in\boldsymbol{\Sigma}_\delta^0\left((\mathrm{Mod}^\kappa_\L)^2\right) \cup \boldsymbol{\Pi}_\delta^0\left((\mathrm{Mod}^\kappa_\L)^2\right)$ if such a \(\delta\) exists, and \( \mathrm{rk}_B({\cong^\kappa_{\ElCl}}) = \infty \) otherwise.

\begin{theorem}\label{link1}
	Let $\kappa^{<\kappa}=\kappa$. If $\mathrm{rk}_B({\cong^\kappa_{\ElCl}}) \neq \infty$, then for every $\M,\N\in \ElCl$ 
\[
\M\cong\N\IFF \2\wins\EF_{\max\{ \mathrm{rk}_B({\cong^\kappa_{\ElCl}}),1 \}}^\kappa(\M,\N).
\]
\end{theorem}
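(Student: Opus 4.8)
The plan is to prove the two implications separately; the heart of the matter will be to convert the Borel \emph{rank} of $\cong^\kappa_{\ElCl}$ into a bound on the quantifier \emph{rank} of a single $\L_{\kappa^+ \kappa}$-sentence isolating an isomorphism class.

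For the forward direction $\M\cong\N\IMP\2\wins\EF^\kappa_{\max\{\mathrm{rk}_B(\cong^\kappa_{\ElCl}),1\}}(\M,\N)$, I would observe that this in fact holds for an EF game of \emph{any} length, using the hypothesis only insofar as it guarantees the right-hand side is well-defined. An isomorphism $g\colon\M\to\N$ furnishes a winning strategy for $\2$: responding to $\1$'s move $(p_n,C_n)$, player $\2$ takes $f_n$ to be the restriction of $g$ to $\dom(f_{n-1})\cup C_n\cup g^{-1}[C_n]$. This is a partial isomorphism extending all previous moves, and it satisfies $\dom(f_n)\cap\ran(f_n)\supseteq C_n$; since every $f_n$ is contained in $g$, the final union $f=\bigcup_{i<n}f_i$ is again a partial isomorphism, so $\2$ wins.

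For the converse I would set $\alpha=\mathrm{rk}_B(\cong^\kappa_{\ElCl})$ (which is $<\kappa^+$ by hypothesis) and first fix $\M\in\ElCl$, analyzing its isomorphism class $[\M]_{\cong}$ as a subset of $\ElCl$. By Fact~\ref{fct:fromertoclass}, $[\M]_{\cong}$ is the continuous preimage of $\cong^\kappa_{\ElCl}$ under $y\mapsto(y,\M)$; as continuous preimages respect the Borel hierarchy, $[\M]_{\cong}\in\boldsymbol{\Sigma}^0_\alpha(\ElCl)\cup\boldsymbol{\Pi}^0_\alpha(\ElCl)$, whence $\mathrm{rk}_B([\M]_{\cong})\leq\alpha$. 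The class $[\M]_{\cong}$ is manifestly closed under isomorphism, so Theorem~\ref{lopezext} applies and yields an $\L_{\kappa^+ \kappa}$-sentence $\upsigma$ with $[\M]_{\cong}=\Mod^\kappa_\upsigma\cap\ElCl$ and, crucially, $R(\upsigma)\leq\max\{\mathrm{rk}_B([\M]_{\cong}),1\}\leq\max\{\alpha,1\}$. To conclude, suppose $\2\wins\EF^\kappa_{\max\{\alpha,1\}}(\M,\N)$. By Theorem~\ref{qr} this is exactly $\M\equiv_{\max\{\alpha,1\}}\N$, i.e.\ $\M$ and $\N$ agree on all $\L_{\infty\kappa}$-sentences of quantifier rank $\leq\max\{\alpha,1\}$. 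Since $\M\models\upsigma$ (as $\M\in[\M]_{\cong}$) and $R(\upsigma)\leq\max\{\alpha,1\}$, we obtain $\N\models\upsigma$, hence $\N\in\Mod^\kappa_\upsigma\cap\ElCl=[\M]_{\cong}$, that is $\N\cong\M$.

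The main obstacle is the third paragraph, and specifically the passage from the Borel rank of the isomorphism relation to the quantifier rank of an axiomatizing sentence: this is precisely where the refined, level-by-level ``moreover'' clause of Theorem~\ref{lopezext} does the decisive work, and why the hypothesis $\kappa^{<\kappa}=\kappa$ is required. Everything else is bookkeeping; the only point demanding a little care is transferring the $\kappa^+$-Borel rank from $\cong^\kappa_{\ElCl}$ (computed in $\ElCl^2$) to the section $[\M]_{\cong}$ (computed in $\ElCl$), which the continuous-preimage observation handles cleanly.
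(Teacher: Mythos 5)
Your proof is correct, but for the nontrivial direction it takes a genuinely different route from the paper's. The paper never axiomatizes individual isomorphism classes: instead it codes \emph{pairs} of \( \L \)-structures as single structures in an expanded language \( \bar{\L} = \L \cup \{P\} \) (with \( P \) a fresh unary predicate splitting the domain), pulls \( \cong^\kappa_{\ElCl} \) back along the resulting continuous decoding map to a Borel invariant set \( A \) of no greater rank inside a suitable invariant class \( \ElCl' \subseteq \Mod^\kappa_{\bar{\L}} \), applies Theorem~\ref{lopezext} \emph{once} to obtain a single \( \bar{\L}_{\kappa^+\kappa} \)-sentence of quantifier rank \( \leq \delta = \max\{\mathrm{rk}_B(\cong^\kappa_{\ElCl}),1\} \) separating codes of isomorphic pairs from the rest, and then, assuming \( \M \not\cong \N \) but \( \2\wins\EF^\kappa_\delta(\M,\N) \), builds codes \( \bar{\A} \) of \( (\M,\M) \) and \( \bar{\B} \) of \( (\M,\N) \) and combines the two winning strategies of \( \2 \) into one for \( \EF^\kappa_\delta(\bar{\A},\bar{\B}) \), contradicting the separation via Theorem~\ref{qr}. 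You replace all of this machinery by the observation that each class \( [\M]_{\cong} \) is itself a Borel invariant subset of \( \ElCl \) of rank \( \leq \mathrm{rk}_B(\cong^\kappa_{\ElCl}) \) (via continuity of \( y \mapsto (y,\M) \), which does preserve the levels of the hierarchy in the generalized setting, exactly as the paper uses for its map \( h \)), so that Theorem~\ref{lopezext} applies directly in the original language and Theorem~\ref{qr} finishes the job; the sentences \( \upsigma_\M \) you produce vary with \( \M \), but their quantifier ranks are uniformly bounded by \( \max\{\mathrm{rk}_B(\cong^\kappa_{\ElCl}),1\} \), which is all the statement requires. Your version is shorter and avoids both the auxiliary coding space and the strategy-combination claim; what the paper's version buys is a single uniform sentence witnessing the separation of isomorphic from non-isomorphic pairs, and it stays closer to the Friedman--Hyttinen--Kulikov argument it is refining. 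Both proofs ultimately rest on the same two ingredients, namely the quantifier-rank clause of Theorem~\ref{lopezext} and Theorem~\ref{qr}.
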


\begin{proof}
First, we extend  our language $\L = \{ R_i \mid i < I \}$ (where each \( R_i \) is of arity \( n_i \)) to the language \( \bar \L = \L \cup \{ P \} \), where \( P \) is a new unary relational symbol. 
The first step is to turn $\cong_{\ElCl}^\kappa$ into a Borel subset $A$ (with same Borel rank) of a suitable invariant set $\ElCl' \subseteq \Mod^\kappa_{\bar\L}$, so that we can apply Theorem~\ref{lopezext}.

Let 
\[
W=\left\{\bar\A\in \Mod^\kappa_{\bar\L}\mid \left|P^{\bar\A}\right|=\left|\kappa\setminus P^{\bar\A}\right|=\kappa\right\}.
\]
Clearly \( W \) is closed under isomorphism, and for every $\bar\A\in W$ there are unique order preserving bijections
\[
\tau^{\bar\A}_1 \colon \kappa\to P^{\bar\A} \quad \text{and} \quad \tau^{\bar\A}_2 \colon \kappa\to\kappa\setminus P^{\bar\A}.
\]
We can then define a continuous surjective map $h \colon W\to(\Mod^\kappa_{\L})^2$ such that 
\[
h(\bar\A)=(h_1(\bar\A),h_2(\bar\A))=(\A_1,\A_2)
\]
where for \( k = 1,2 \) we let  $\A_k$ be the \( \L \)-structure defined by setting
\[
R_i^{\A_k}(x_1,\dots,x_{n_i})\IFF R_i^{\bar{\A}}(\tau^{\bar{\A}}_k(x_1),\dots,\tau^{\bar{\A}}_k(x_{n_i}))
\]
for every $i<I$. Let 
\[
\ElCl'=W\cap h^{-1}\left[\ElCl \times \ElCl\right].
\]
Our hypothesis is that 
\( \mathrm{rk}_B({\cong^\kappa_{\ElCl}})  < \kappa^+ \), hence also \( \delta = \max \{ \mathrm{rk}_B({\cong^\kappa_{\ElCl}}), 1 \} < \kappa^+ \).
By continuity of $h$ we have that
\[
A=\left\{{\bar{\A}}\in \ElCl' \mid h_1({\bar{\A}})\cong h_2({\bar{\A}})\right\}=h^{-1}({\cong^\kappa_{\ElCl}})
\]
is such that \( \mathrm{rk}_B(A) \leq \mathrm{rk}_B({\cong^\kappa_{\ElCl}}) \),
and clearly both $\ElCl'$ and $A$ are closed under isomorphisms because ${\bar{\A}}\cong{\bar{\B}}$ implies both  ${\bar{\A}}\cap P^{{\bar{\A}}}\cong{\bar{\B}}\cap P^{{\bar{\B}}}$ and $ {\bar{\A}}\setminus P^{{\bar{\A}}}\cong {\bar{\B}}\setminus P^{{\bar{\B}}}$: thus by Theorem~\ref{lopezext} there exists an \( \bar{\L}_{\kappa^+\kappa} \)-sentence $\upsigma$ with $R(\upsigma) \leq \max \{ \mathrm{rk}_B(A),1 \} \leq \delta$  such that 
\[
A=\left\{{\bar{\A}}\in \ElCl'\mid {\bar{\A}}\models \upsigma\right\}.
\]
It follows that \( \bar{\A}\not\equiv_\delta\bar{\B}\) for all ${\bar{\A}} \in A \) and \( \bar{\B}\in \ElCl' \setminus A$. 
By Theorem \ref{qr}, this means that for all ${\bar{\A}},\bar{\B}\in \ElCl'$ 

\begin{equation}\label{game}
\text{if } {\bar{\A}}\in A \text{ and }\ {\bar{\B}}\notin A, \text{ then }\2\not\wins\EF^\kappa_{{\delta}}({\bar{\A}},{\bar{\B}}).
\end{equation}

We use this fact to show that for all $\M,\N\in \ElCl$
\[
\M\cong\N\IFF\2\wins\EF^\kappa_{{\delta}}(\M,\N),
\]
as desired.
The direction ($\IMP$) is obvious by definition of $\EF$-games. In order to prove $(\Leftarrow)$, suppose towards a contradiction that there are nonisomorphic models $\M,\N \in \ElCl$ such that $\2\wins\EF^\kappa_{{\delta}}(\M,\N)$. We define ${\bar{\A}},{\bar{\B}}\in X$ by setting:
\begin{itemize}
\item 
$P^{\bar{\A}}=P^{\bar{\B}}=\{2\alpha\mid\alpha<\kappa\}$, so that  
\[
\tau_1^{\bar{\A}}=\tau_1^{\bar{\B}} \colon \kappa\to P^{\bar{\A}} \colon \alpha\mapsto2\alpha
\] 
and 
\[ 
\tau_2^{\bar{\A}}=\tau_2^{\bar{\B}} \colon\kappa\to\kappa\setminus P^{\bar{\A}} \colon\alpha\mapsto2\alpha+1,
\]
\end{itemize}
and  for every $R_i \in \L$  
\begin{itemize}
\item
\( R^{\bar{\A}}(\tau^{\bar{\A}}_1(x_1),\dots,\tau^{\bar{\A}}_1(x_{n_i})) \IFF R^{\bar{\A}}(\tau^{\bar{\A}}_2(x_1),\dots,\tau^{\bar{\A}}_2(x_{n_i})) \IFF R^{\M}(x_1,\dots,x_{n_i})\) 
\item
\( R^{\bar{\B}}(\tau^{\bar{\B}}_1(x_1),\dots,\tau^{\bar{\B}}_1(x_{n_i})) \IFF  R^{\M}(x_1,\dots,x_{n_i})\), while  \( R^{\bar{\B}}(\tau^{\bar{\B}}_2(x_1),\dots,\tau^{\bar{\B}}_2(x_{n_i}))\IFF R^{\N}(x_1,\dots,x_{n_i}) \)
\item 
for every \( x_1,\dotsc, x_{n_i} \in \kappa \), if there are \( 1 \leq k,k' \leq n_i \) such that \( P^{\bar{\A}}(x_k) \) and \( \neg P^{\bar{\A}}(x_{k'}) \) then \( \neg R^{\bar{\A}}(x_1, \dotsc, x_{n_i}) \), and the same with \( \bar{\A} \) replaced by \( \bar{\B} \).
\end{itemize}  
Note that $h({\bar{\A}})=(\M,\M)$ and $h({\bar{\B}})=(\M,\N)$.

\begin{claim} \label{claim:2wins}
\( \2 \wins \EF^\kappa_{{\delta}}({\bar{\A}},{\bar{\B}}) \).
\end{claim}	

\begin{proof}[Proof of the claim] 
 Clearly $\2\wins\EF^\kappa_{{\delta}}(\M,\M)$, and we assumed 
 $\2\wins\EF^\kappa_{{\delta}}(\M,\N)$. Let $\sigma_1$ and $\sigma_2$ be two winning strategies for \( \2 \) 
 in the respective games. We are going to show how to combine $\sigma_1$ and $\sigma_2$ in order to 
obtain a winning strategy for $\2$ in $\EF^\kappa_{{\delta}}({\bar{\A}},{\bar{\B}})$. To this aim, first 
note that any partial isomorphisms $f_1 \colon \M\to\M$ and $ f_2\colon\M\to\N$ induce partial 
isomorphisms $\overline {f_1}\colon{\bar{\A}}\cap P^{{\bar{\A}}}\to{\bar{\B}}\cap P^{{\bar{\B}}}$ and \mbox{$\overline{f_2}\colon{\bar{\A}}\setminus P^{{\bar{\A}}}\imp {\bar{\B}}\setminus P^{{\bar{\B}}}$,} 
whose union is still a partial isomorphism $f\colon{\bar{\A}}\to {\bar{\B}}$ (because of the last condition 
in the definition of ${\bar{\A}}$ and ${\bar{\B}}$). Suppose player $\1$ has last played $(p,C)$  in $\EF^\kappa_{{\delta}}({\bar{\A}},{\bar{\B}})$ for some $p\in \Tr$ and $C\subset\kappa$. Let 
\[ 
C_1=\{\alpha<\kappa\mid 2\alpha\in C\} \quad \text{and} \quad \ C_2=\{\alpha<\kappa\mid 2\alpha+1\in C\}.
\]
Then the map
\[
(p,C)\mapsto\overline{ \sigma_1((p,C_1))}\cup\overline{\sigma_2((p,C_2))}
\] 
is clearly a winning strategy for player $\2$.
\end{proof}

Now notice that ${\bar{\A}}\in A$ because \(h({\bar{\A}})=(\M,\M) \), while ${\bar{\B}}\notin A$ because \mbox{\( h({\bar{\B}})=(\M,\N) \)} and we assumed $\M\ncong\N$. Thus $\2\not\wins\EF^\kappa_{{\delta}}({\bar{\A}},{\bar{\B}})$ by (\ref{game}), contradicting Claim~\ref{claim:2wins}.
\end{proof}

\begin{theorem}\label{link2}
Let $\kappa^{<\kappa}=\kappa$ and assume that \( | \L | < \kappa \). Suppose that there is  a well-founded $\leq\kappa$-sized tree $\Tr$ with  $\varrho(\Tr) = \beta+1 $ such that  for every $\M,\N\in \ElCl$
\[
\M\cong\N\IFF \2\wins\EF_\Tr^\kappa(\M,\N).
\]
Then ${\cong_\ElCl^\kappa} \in \boldsymbol{\Pi}_{2\beta}^0(\ElCl^2)$, whence, in particular, \( \mathrm{rk}_B({\cong^\kappa_\ElCl}) \leq  2\beta \).
\end{theorem}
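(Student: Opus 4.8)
The plan is to convert the Ehrenfeucht--Fra\"iss\'e game $\EF_\Tr^\kappa$ into a $\kappa^+$-Borel code in the sense of Theorem~\ref{orig1}, exploiting that the EF game and the game $G(\Tr',\ell,\cdot)$ attached to a Borel code $(\Tr',\ell)$ share the same alternation pattern: player $\1$ moves first, the players alternate, and $\2$ wins at a leaf according to a fixed condition. Concretely, I would build a tree $\Tr'$ whose nodes are the legal finite partial runs of $\EF_\Tr^\kappa(\cdot,\cdot)$ (these are independent of $\M,\N$ by Remark~\ref{rmk:run}), an immediate successor recording one further move of the player to move: a pair $(p_n,C_n)$ when it is $\1$'s turn, a partial function $f_n$ when it is $\2$'s turn. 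Each node has $\leq\kappa$ immediate successors, since there are $\leq\kappa$ legal choices of $(p_n,C_n)$ (at most $\kappa$ nodes times $\kappa^{<\kappa}=\kappa$ sets) and $\leq\kappa^{<\kappa}=\kappa$ legal $f_n$; as $\Tr$ is well-founded every run is finite, so $\Tr'$ is well-founded with $|\Tr'|\leq\kappa^{<\omega}=\kappa$.

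For the labeling, a leaf of $\Tr'$ is exactly a completed run, i.e.\ a position where $\1$ must move from a leaf of $\Tr$ but cannot; such a leaf determines $f=\bigcup_{i<n}f_i$, and I would set $\ell(\text{leaf})=\{(\M,\N)\in\ElCl^2\mid f \text{ is a partial isomorphism}\}$. This set is clopen: for fixed $f$ with $|\dom f|<\kappa$, the condition is the conjunction, over the $<\kappa$-many pairs $(i,\bar a)$ with $i<I$ and $\bar a\in(\dom f)^{n_i}$, of the clopen biconditionals $R_i^\M(\bar a)\iff R_i^\N(f(\bar a))$, and all of these depend only on coordinates inside a bounded subset of $\kappa$ (here I use $|\L|<\kappa$ together with the regularity of $\kappa$, which follows from $\kappa^{<\kappa}=\kappa$). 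Since the winning conditions of $G(\Tr',\ell,(\M,\N))$ and of $\EF_\Tr^\kappa(\M,\N)$ then coincide move-for-move, $\2\wins G(\Tr',\ell,(\M,\N))\iff\2\wins\EF_\Tr^\kappa(\M,\N)\iff\M\cong\N$, so $B(\Tr',\ell)={\cong^\kappa_\ElCl}$.

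The heart of the argument, and where the factor $2$ appears, is the rank computation. Each round of $\EF_\Tr^\kappa$ descends one level in $\Tr$ but occupies two consecutive levels of $\Tr'$. I would prove, by induction on the $\Tr'$-rank, that every node at which $\1$ is to move from some $q\in\Tr$ has $\Tr'$-rank $\leq 2\cdot\varrho_\Tr(q)$, while every node at which $\2$ is to move right after $\1$ played $p\in\Tr$ has $\Tr'$-rank $\leq 2\cdot\varrho_\Tr(p)+1$. The base case is a leaf of $\Tr$ ($\varrho_\Tr=0$), giving a leaf of $\Tr'$; in the step for $\1$'s positions I use that $\delta\mapsto 2\cdot\delta$ is a normal function, so that $\sup_{p}\,2\cdot(\varrho_\Tr(p)+1)=2\cdot\sup_p(\varrho_\Tr(p)+1)=2\cdot\varrho_\Tr(q)$ over the immediate successors $p$ of $q$, while the $\2$-step merely adds $1$. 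Applied at the root of $\Tr'$ (where $\1$ moves from the root of $\Tr$, of rank $\beta$) this yields $\varrho(\Tr')\leq 2\cdot\beta+1$.

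Finally, Theorem~\ref{orig1} with $\alpha=2\beta$ converts the bound $\varrho(\Tr')\leq 2\beta+1=\alpha+1$ into ${\cong^\kappa_\ElCl}=B(\Tr',\ell)\in\boldsymbol{\Pi}^0_{2\beta}(\ElCl^2)$, whence $\mathrm{rk}_B({\cong^\kappa_\ElCl})\leq 2\beta$. I expect the main obstacle to be bookkeeping rather than conceptual: pinning down the encoding of runs as nodes precisely enough that the two games' winning conditions literally agree, and checking the normality-based identity at limit stages so that the bound is the sharper $2\cdot\beta$ (left multiplication) and not $\beta\cdot 2$. Verifying clopen-ness of the labels, which is exactly where the hypotheses $|\L|<\kappa$ and $\kappa^{<\kappa}=\kappa$ are needed, is routine but must be done.
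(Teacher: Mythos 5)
Your proposal is correct and follows essentially the same route as the paper: both build the well-founded tree of partial runs of $\EF_\Tr^\kappa(\cdot,\cdot)$, label its leaves by the (clopen, thanks to $|\L|<\kappa$ and the regularity of $\kappa$) sets of pairs for which the accumulated $f$ is a partial isomorphism, verify that the resulting code game and the EF game have the same winning player, compute the rank of the run-tree to be $2\beta+1$ via the continuity of $\delta\mapsto 2\cdot\delta$, and conclude by Theorem~\ref{orig1}. The only cosmetic difference is that the paper states the rank as an exact equality $\varrho(\mathscr{U})=2\beta+1$ while you only claim the inequality $\leq 2\beta+1$, which is all that is needed.
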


\begin{proof}
Let $\mathscr{U}$ be the tree of all partial runs in $\EF_\Tr^\kappa(\cdot,\cdot)$, that is, the tree generated by the sequences of the form 
\[
\langle(p_0,C_0),f_0,\dots,(p_{n-1},C_{n-1}),f_{n-1}\rangle
\]
such that:
\begin{itemize}
\item 
$p_0$ is an immediate successor of the root $r$ of $\Tr$, and $p_i$ is an immediate successor of $p_{i-1}$ for every $0 < i < n$;
\item 
$C_i\subseteq C_j\subset\kappa$ with $ |C_j|<\kappa$ for every $0 \leq i\leq j< n$;
\item 
$f_j \colon \kappa\to\kappa$ is a partial function such that $\dom(f_j)\cap\ran(f_j)\supseteq C_j$, \mbox{$|\dom(f_j)|<\kappa$} and $f_i\subseteq f_j$ for every $0 \leq i\leq j< n$.
\end{itemize}

Clearly $\mathscr{U}$ is still a well-founded tree because every branch of $\mathscr{U}$ is long twice some branch of $\Tr$ (plus the root), and it is of size \( \leq \kappa \) because the amount of successors of any node of \( \mathscr{U} \), which is determined by the number of possible moves of \( \1 \) and \( \2 \) in a round of $\EF_\Tr^\kappa(\cdot,\cdot)$, is at most $\kappa^{<\kappa}=\kappa$.  Notice also that the leaves of \( \mathscr{U} \) are exactly the maximal runs in \( \EF_\Tr^\kappa(\cdot,\cdot)$, that is sequences
\[ 
\langle(p_0,C_0),f_0,\dots,(p_{n-1},C_{n-1}),f_{n-1}\rangle
 \] 
where \( p_{n-1} \) is a leaf of \( \Tr \). (When \( \beta = 0 \)  we have \( \mathscr{U} = \{ \emptyset \} \), and the unique maximal branch of \( \mathscr{U} \) is \( \langle \emptyset \rangle \).)

\begin{claim} \label{rankU}
$\varrho(\mathscr{U}) = 2\beta+1$.
\end{claim}

\begin{proof}[Proof of the claim]
By induction on $\beta$. The case $\beta=0$ is clear. Suppose $\beta>0$. Then the root $r$ of $\Tr$ has rank $\beta$, and its immediate successors $\{q_i\mid i<\kappa\}$ have rank $\beta_i<\beta$.  By the inductive hypothesis every node of the form $\langle (q_i,C),f\rangle\in\nolinebreak \mathscr{U}$ has rank $2\beta_i$, thus every node of the form $\langle(q_i,C)\rangle$ has rank $2\beta_i+1$ and the root has rank $\sup\{2\beta_i+2\mid i<\kappa\}=2\beta$, whence $\mathscr{U}$ has rank $2\beta+1$.	\end{proof} 
	
Next we define a labeling function
$\ell$ on the leaves  
\[
b=\langle(p_0,C_0),f_0,\dots,(p_{n-1},C_{n-1}),f_{n-1}\rangle
\]
of \( \mathscr{U} \) by setting \( f = \bigcup_{i < n} f_i = f_{n-1} \) and

\begin{equation} \label{eq:ellb}
\ell(b)=\left\{(\M,\N)\in \ElCl^2 \mid f \colon \M\imp\N \text{ is a  partial isomorphism}\right\}.
\end{equation}

(Notice that when \( \beta = 0 \) then necessarily \( n = 0 \), so that \( f \) is the empty function and \( \ell(b) = \ElCl^2 \).)
We claim%
\footnote{Here is where we use the assumption \( | \L | < \kappa \), which ensures that we can take \( \Q \) and \( \R \) below to be substructures of \( \M \) and \( \N \), and not just of suitable reducts of them.}
 that $\ell(b)$ is a (relatively) clopen subset of \( \ElCl^2 \). 
Let \( (\M,\N) \in \ell(b) \), and let
 \( \Q \) and \( \R \) be the substructures of, respectively, \( \M \) and \( \N \) with domain  $\dom (f)\cup\ran (f)$. Since \( | \dom (f)\cup\ran (f) | < \kappa \) and \( \kappa \) is regular, \( \Nbhd_\Q \) and \( \Nbhd_\R \) are basic clopen sets of \( \Mod^\kappa_\L \) and clearly
\[ 
(\M,\N) \in (\Nbhd_\Q \times \Nbhd_\R) \cap \ElCl^2 \subseteq \ell(b).
 \] 
This shows that \( \ell(b) \) is (relatively) open in $\ElCl^2$, and the same argument (applied to any \( (\M,\N) \notin \ell(b) \)) shows that it is also (relatively) closed.

Thus $(\mathscr{U},\ell)$ is a \( \kappa^+ \)-Borel code for a subset of $\ElCl^2$: we claim that 
$B(\mathscr{U},\ell)$ is exactly ${\cong_{\ElCl}^\kappa}$, that is that for every \( \M,\N \in \ElCl \)
\[
\M\cong \N\IFF\2\wins G(\mathscr{U},\ell,(\M,\N)).
\]
Indeed, by the hypothesis of the theorem it suffices to show
\[
\2\wins\EF_{\Tr}^\kappa(\M,\N)\IFF\2\wins G(\mathscr{U},\ell,(\M,\N)):
\]
but this follows immediately from the fact that the two games have essentially the same moves (by definition of $\mathscr{U}$), and the winning conditions for player $\2$ in the two games are equivalent (by definition of $\ell$). 

Since $\varrho(\mathscr{U})=2\beta+1$  by Claim~\ref{rankU}, by Theorem~\ref{orig1} we then get that \mbox{${\cong_{\ElCl}^\kappa} = B(\mathscr{U}, \ell)$} belongs to $\boldsymbol{\Pi}_{2\beta}^0(\ElCl^2)$, as required.
\end{proof}

We are now ready to prove our main technical result, which refines~\cite[Theorem 65]{FHK14} and yields to Theorem~\ref{thm:ranksintro} by setting \(\ElCl = \Mod^\kappa_T \) for \( T \) the first-order theory under consideration. 

\begin{defin} \label{def:rk}
Given an invariant set \( \ElCl \subseteq \Mod^\kappa_T \), we set
\[ 
B(\kappa,\ElCl) = \mathrm{rk}_B(\cong^\kappa_{\ElCl}).
 \] 
 When \( \ElCl \) is of the form \( \Mod^\kappa_T \) for some first-order theory \( T \) (respectively, of the form \( \Mod^\kappa_\upvarphi \) for some \( \L_{\kappa^+ \kappa} \)-sentence \( \upvarphi \)) we simply write 
 \( B(\kappa,T) \) (respectively, \( B(\kappa,\upvarphi) \)) rather than \( B(\kappa,\Mod^\kappa_T) \) (respectively, \( B(\kappa,\Mod^\kappa_\upvarphi) \)).
\end{defin}

\begin{theorem} \label{orig3}
Let $\kappa^{<\kappa}=\kappa$, assume that \( | \L | < \kappa \), and let \( \ElCl \subseteq \Mod^\kappa_\L \) be an invariant set.
\begin{enumerate-(1)}
\item \label{orig3-1}
$S(\kappa,\ElCl)\leq \max \{B(\kappa,\ElCl),1 \}$.
In particular, if \( \cong^\kappa_{\ElCl} \) is \( \kappa^+ \)-Borel, then \( S(\kappa,{\ElCl}) < \kappa^+ \). If moreover \( \ElCl = \Mod^\kappa_T \) for some complete first-order theory \( T \), then we further have \( S(\kappa,\ElCl) \leq B(\kappa, \ElCl) \).
\item \label{orig3-2}
If $S(\kappa,{\ElCl}) < \kappa^+$, then \( {\cong^\kappa_{\ElCl}} \in \boldsymbol{\Pi}^0_{2S(\kappa,\ElCl)}(\ElCl^2) \), whence \( \cong^\kappa_{\ElCl} \) is \( \kappa^+ \)-Borel. In particular, \( B(\kappa,\ElCl) \leq 2 S(\kappa,\ElCl) \).
\end{enumerate-(1)}
\end{theorem}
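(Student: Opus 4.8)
The plan is to derive both parts as fairly direct consequences of the two preceding ``link'' theorems, Theorem~\ref{link1} and Theorem~\ref{link2}, together with the game-theoretic characterization of \( \equiv_\alpha \) provided by Theorem~\ref{qr}. For part~\ref{orig3-1}, I would first dispose of the trivial case \( B(\kappa,\ElCl) = \infty \), in which the claimed inequality holds vacuously. Assuming then that \( \cong^\kappa_\ElCl \) is \( \kappa^+ \)-Borel, set \( \delta = \max\{B(\kappa,\ElCl),1\} \). Theorem~\ref{link1} gives, for all \( \M,\N \in \ElCl \), that \( \M \cong \N \iff \2\wins\EF^\kappa_\delta(\M,\N) \), and Theorem~\ref{qr} rewrites the right-hand side as \( \M \equiv_\delta \N \). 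Thus \( \M \cong \N \iff \M \equiv_\delta \N \); in particular \( \M \equiv_\delta \N \IMP \M \cong \N \) for every \( \N \in \ElCl \), which by definition means \( S(\kappa,\ElCl,\M) \leq \delta \) for each \( \M \in \ElCl \), and hence \( S(\kappa,\ElCl) \leq \delta \). The ``in particular'' clause is then immediate, since \( \cong^\kappa_\ElCl \) being \( \kappa^+ \)-Borel forces \( B(\kappa,\ElCl) < \kappa^+ \), whence \( \delta < \kappa^+ \).

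The refined bound \( S(\kappa,\ElCl) \leq B(\kappa,\ElCl) \) for \( \ElCl = \Mod^\kappa_T \) differs from the general inequality only when \( B(\kappa,\ElCl) = 0 \), i.e.\ exactly when \( \cong^\kappa_T \) is clopen. In this boundary case I would invoke the categoricity characterization, Theorem~\ref{thm:catcharbounded}: \( \mathrm{rk}_B(\cong^\kappa_T) = 0 \) forces \( T \) to be \( \kappa \)-categorical, so \( \Mod^\kappa_T \) is a single isomorphism class, and then Remark~\ref{rmk:categorical} yields \( S(\kappa,\ElCl) = 0 = B(\kappa,\ElCl) \). In all remaining cases \( B(\kappa,\ElCl) \geq 1 \), so \( \delta = B(\kappa,\ElCl) \) and the general inequality already gives the claim.

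For part~\ref{orig3-2}, set \( \beta = S(\kappa,\ElCl) < \kappa^+ \) and take \( \Tr = \Tr_\beta \), the canonical tree of strictly decreasing sequences of ordinals \( < \beta \). A short induction shows \( \varrho(\Tr_\beta) = \beta+1 \) (the cone above \( \langle\gamma\rangle \) is a copy of \( \Tr_\gamma \), so \( \varrho_{\Tr_\beta}(\langle\gamma\rangle) = \gamma \) and the root has rank \( \sup\{\gamma+1 \mid \gamma<\beta\} = \beta \)), while \( |\Tr_\beta| \leq |\beta| \leq \kappa \). I then claim \( \M \cong \N \iff \2\wins\EF^\kappa_\beta(\M,\N) \) for all \( \M,\N \in \ElCl \). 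The forward direction is the trivial one, an isomorphism giving \( \2 \) a winning strategy in any EF-game. For the converse, \( \2\wins\EF^\kappa_\beta(\M,\N) \) gives \( \M\equiv_\beta\N \) by Theorem~\ref{qr}; since \( \beta \geq S(\kappa,\ElCl,\M) \) and \( \equiv_\alpha \) is monotone (agreement on \( \L_{\infty\kappa} \)-sentences of quantifier rank \( \leq\beta \) entails agreement on those of rank \( \leq S(\kappa,\ElCl,\M) \)), the defining property of \( S(\kappa,\ElCl,\M) \) yields \( \M\cong\N \). With this game equivalence established, Theorem~\ref{link2} (applicable because \( |\L| < \kappa \) and \( \varrho(\Tr_\beta) = \beta+1 \)) immediately gives \( \cong^\kappa_\ElCl \in \boldsymbol{\Pi}^0_{2\beta}(\ElCl^2) \), hence \( B(\kappa,\ElCl) \leq 2\beta = 2S(\kappa,\ElCl) \).

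The two ``link'' theorems carry essentially all the weight, so no step is genuinely deep; the only points requiring care are the boundary bookkeeping at ranks \( 0 \) and \( 1 \) in part~\ref{orig3-1}, resolved via the categoricity characterization, and the verification that the canonical tree \( \Tr_\beta \) has rank exactly \( \beta+1 \), which is precisely what makes the output class \( \boldsymbol{\Pi}^0_{2\beta} \) match the asserted bound \( 2S(\kappa,\ElCl) \).
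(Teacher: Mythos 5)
Your proposal is correct and follows essentially the same route as the paper: part~(1) via Theorem~\ref{link1} together with Theorem~\ref{qr} and the categoricity characterization of Theorem~\ref{thm:catcharbounded} for the boundary case \( B(\kappa,\ElCl)=0 \), and part~(2) by feeding the canonical tree \( \Tr_{S(\kappa,\ElCl)} \), of rank \( S(\kappa,\ElCl)+1 \), into Theorem~\ref{link2}. The extra bookkeeping you supply (monotonicity of \( \equiv_\alpha \), the rank computation for \( \Tr_\beta \)) is exactly what the paper leaves implicit.
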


\begin{proof}
If \( B(\kappa,\ElCl) \neq \infty \), then by Theorem~\ref{link1} for every $\M,\N \in \ElCl$ we have \linebreak \mbox{$\2\wins\EF^\kappa_{\max \{ B(\kappa,\ElCl), 1 \}}(\M,\N)\IFF\M\cong\N$}, which means 
$S(\kappa,\ElCl)\leq \max \{ B(\kappa,\ElCl) ,1  \}$. The additional part when \( \ElCl \) is axiomatized by a complete first-order theory \( T \) follows from the fact that if \( \mathrm{rk}_B({\cong^\kappa_T}) = 0 \) then \( T \) is \( \kappa \)-categorical by Theorem~\ref{thm:catcharbounded}, hence also \( S(\kappa,\ElCl) = S(\kappa,T) = 0 \).
	
On the other hand, if \( S(\kappa,\ElCl) < \kappa^+ \) then $\Tr_{S(\kappa,\ElCl)}$, which is of rank \( S(\kappa,\ElCl)+1 \), witnesses the hypothesis of Theorem~\ref{link2}, so \( {\cong^\kappa_{\ElCl}} \in \boldsymbol{\Pi}^0_{2S(\kappa,\ElCl)}(\ElCl^2) \).
\end{proof}

\begin{cor} \label{cor:orig3}
Let $\kappa^{<\kappa}=\kappa$, assume that \( |\L|< \kappa \), and let \( \ElCl \subseteq \Mod^\kappa_\L \) be an invariant set.
\begin{enumerate-(1)}
\item \label{cor:orig3-1}
If one of \( B(\kappa,\ElCl) \) and \( S(\kappa,\ElCl) \) is \( < \kappa^+ \), then both of them are \( < \kappa^+ \) and
\[ 
S(\kappa,\ElCl)\leq \max \{ B(\kappa,\ElCl) , 1 \} \qquad \text{and} \qquad B(\kappa,\ElCl)\leq 2 S(\kappa,\ElCl),
 \] 
so that $S(\kappa,\ElCl)$ and $B(\kappa,\ElCl)$ have finite distance.
\item \label{cor:orig3-1.5}
If moreover \( \ElCl = \Mod^\kappa_T \) for a complete first-order theory \( T \), then actually
\[ 
S(\kappa,T) \leq B(\kappa,T) \leq 2 S(\kappa,T).
 \] 
\item \label{cor:orig3-2}
If one of \( B(\kappa,\ElCl) \) and \( S(\kappa,\ElCl) \) is a limit ordinal $<\kappa^+$, then 
\[
B(\kappa,\ElCl) = S(\kappa,\ElCl).
\]
\end{enumerate-(1)}
\end{cor}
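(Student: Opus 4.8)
The plan is to treat Corollary~\ref{cor:orig3} as a bookkeeping exercise built entirely on the two bounds of Theorem~\ref{orig3}, the only real content being elementary ordinal arithmetic. For part~\ref{cor:orig3-1} I would first establish that ``\( B(\kappa,\ElCl) < \kappa^+ \)'' and ``\( S(\kappa,\ElCl) < \kappa^+ \)'' are equivalent, and that when they hold both displayed inequalities follow. If \( B(\kappa,\ElCl) < \kappa^+ \), then Theorem~\ref{orig3}\ref{orig3-1} gives \( S(\kappa,\ElCl) \leq \max\{B(\kappa,\ElCl),1\} < \kappa^+ \), after which Theorem~\ref{orig3}\ref{orig3-2} yields \( B(\kappa,\ElCl) \leq 2 S(\kappa,\ElCl) \). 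Conversely, if \( S(\kappa,\ElCl) < \kappa^+ \), then Theorem~\ref{orig3}\ref{orig3-2} makes \( \cong^\kappa_\ElCl \) \( \kappa^+ \)-Borel with \( B(\kappa,\ElCl) \leq 2 S(\kappa,\ElCl) < \kappa^+ \) (the last inequality because \( \kappa^+ \) is a cardinal, so \( |2 \cdot S(\kappa,\ElCl)| \leq \kappa \)), and then Theorem~\ref{orig3}\ref{orig3-1} supplies \( S(\kappa,\ElCl) \leq \max\{B(\kappa,\ElCl),1\} \). Thus invoking the two parts of Theorem~\ref{orig3} in turn delivers both inequalities from either hypothesis.

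For the finite distance assertion I would use the normal form \( S(\kappa,\ElCl) = \lambda + n \) with \( \lambda \) limit (or \( 0 \)) and \( n < \omega \), together with the arithmetic fact that \( 2 \cdot \mu = \mu \) for every limit \( \mu \); hence \( 2 \cdot S(\kappa,\ElCl) = \lambda + 2n \). One first notes that \( S \) and \( B \) are simultaneously finite or simultaneously infinite, since \( B \leq 2S \) bounds an infinite ordinal only by an infinite one, and likewise \( S \leq \max\{B,1\} \). In the finite case the distance is trivially finite; in the infinite case \( \max\{B,1\} = B \), so \( \lambda + n = S \leq B \leq \lambda + 2n \), forcing \( B(\kappa,\ElCl) = \lambda + m \) with \( n \leq m \leq 2n \), a finite distance \( m - n \). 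Part~\ref{cor:orig3-1.5} is then immediate: the ``moreover'' clause of Theorem~\ref{orig3}\ref{orig3-1} already gives \( S(\kappa,T) \leq B(\kappa,T) \) when \( \ElCl = \Mod^\kappa_T \) for a complete \( T \), and combining this with \( B(\kappa,\ElCl) \leq 2 S(\kappa,\ElCl) \) from part~\ref{cor:orig3-1} produces \( S(\kappa,T) \leq B(\kappa,T) \leq 2 S(\kappa,T) \).

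For part~\ref{cor:orig3-2}, assume one of the ranks is a limit ordinal \( < \kappa^+ \); by part~\ref{cor:orig3-1} both are then \( < \kappa^+ \) and infinite, so \( \max\{B,1\} = B \) and \( 2 \cdot S = \lambda + 2n \) as above. If \( S(\kappa,\ElCl) \) is the limit ordinal, then \( n = 0 \), so \( 2 \cdot S = S \), whence \( B \leq S \); together with \( S \leq B \) this gives \( B(\kappa,\ElCl) = S(\kappa,\ElCl) \). If instead \( B(\kappa,\ElCl) \) is the limit ordinal, I would exploit the sandwich \( \lambda + n \leq B \leq \lambda + 2n \): every ordinal in the interval \( [\lambda + n, \lambda + 2n] \) has the form \( \lambda + k \) with \( n \leq k \leq 2n \), and such an ordinal is a \emph{successor} as soon as \( k \geq 1 \); since \( B \) is a limit, this forces \( n = 0 \), and again \( S(\kappa,\ElCl) = \lambda = B(\kappa,\ElCl) \).

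The only genuinely delicate point is the normal-form argument underlying the finite distance and part~\ref{cor:orig3-2}: one must respect the convention \( 2\beta = 2 \cdot \beta \) (left multiplication) that is fixed implicitly by Claim~\ref{rankU} and Theorem~\ref{link2}, apply \( 2 \cdot \mu = \mu \) for limit \( \mu \), and keep careful track of the degenerate behaviour of \( \max\{B,1\} \) (for instance the extremal case \( B(\kappa,\ElCl) = 0 \), which by Theorem~\ref{orig3}\ref{orig3-1} forces \( S(\kappa,\ElCl) \leq 1 \) and is thus consistent with finite distance). Apart from this arithmetic, every step is a direct appeal to Theorem~\ref{orig3}.
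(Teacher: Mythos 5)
Your proposal is correct and follows essentially the same route as the paper: parts \ref{cor:orig3-1} and \ref{cor:orig3-1.5} are read off directly from the two halves of Theorem~\ref{orig3}, and part \ref{cor:orig3-2} is settled by the Cantor normal form together with \( 2\cdot\mu=\mu \) for limit \( \mu \), your direct ``sandwich'' argument for the case where \( B(\kappa,\ElCl) \) is limit being just the contrapositive of the paper's observation that if \( S(\kappa,\ElCl) \) is not limit then no ordinal between \( S(\kappa,\ElCl) \) and \( 2S(\kappa,\ElCl) \) (nor \( 0 \)) is limit. Your remark that \( 2\beta \) must be read as left multiplication \( 2\cdot\beta \) is also the correct reading of Claim~\ref{rankU}.
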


\begin{proof}
For part~\ref{cor:orig3-2}, notice that if \( S(\kappa,\ElCl) \) is limit then \(  S(\kappa,\ElCl) = 2 S(\kappa,\ElCl) \), whence \( B(\kappa,\ElCl) = S(\kappa,\ElCl) \) by~\ref{cor:orig3-1}. Therefore it is enough to show that if \( S(\kappa,\ElCl) \) is not limit, then neither is \( B(\kappa,\ElCl) \): this follows again from~\ref{cor:orig3-1}, noticing that in such case all ordinals between \( S(\kappa,\ElCl) \) and \( 2 S(\kappa,\ElCl) \) are not limit, and either  \( B(\kappa,\ElCl)\) is among them or it is $0$.
\end{proof}

We will see in Section~\ref{subsec:someexamples} that part~\ref{cor:orig3-2} above may fail for successor ordinals even when considering the special case of invariant sets axiomatized by a countable complete first-order theory \( T \).

\begin{remark}
In some of the above results we had to require \( |\L| < \kappa \). This has no influence on the main results of the paper contained in Section~\ref{sec:descriptivemaingap}, as there we will be dealing with uncountable \( \kappa\)'s and countable languages \( \L \). As for the results of this section, notice that the hypothesis that \( \L \) be small is only used in the proof  of Theorem~\ref{link2} to ensure that the labelling function \( \ell \) defined after Claim~\ref{rankU} takes clopen sets as values. When \( \L \) is of size \( \kappa \), the set \( \ell(b) \) in~\eqref{eq:ellb} turns out to be in general closed: this just causes a minor modification to the indexes one gets. For example, in this situation we can still conclude at least that \( {\cong^\kappa_\ElCl} \in \boldsymbol{\Pi}^0_{2\beta+1}(\ElCl^2) \), so that \( \mathrm{rk}_B({\cong^\kappa_\ElCl}) \leq 2\beta +1 \). With this in mind, one can easily modify the subsequent results accordingly and get e.g.\ that \( S(\kappa,\ElCl) \) and \( B(\kappa, \ElCl) \) have finite distance also when \( |\L| = \kappa \).
\end{remark}

\section{A descriptive analogue to Shelah's Main Gap Theorem} \label{sec:descriptivemaingap}

\emph{For the rest of this section, we fix a countable complete first-order theory \( T \)} (which in particular means that the underlying language \( \L \) is countable as well).
Under certain cardinality hypotheses, Shelah's classification can be translated in terms of Scott height. We sum up in the following theorem the results we need.

\begin{theorem}[Shelah]
\begin{enumerate-(1)}
\item
Let $\kappa>2^{\aleph_0}$. If $T$ is  classifiable shallow of depth $\alpha$, then all of its $\kappa$-sized models have $\L_{\infty\kappa}$-Scott height $\leq2\alpha$.
\item
Let $\kappa\geq2^{\aleph_0}$. If $T$ is  superstable deep (in particular, if it is classifiable deep), then there are $\kappa$-sized models of $T$ with arbitrarily large $\L_{\infty\kappa}$-Scott height below $\kappa^+$.
\item
Let $\kappa>\omega$ regular. If $T$ is not classifiable, then there are $\kappa$-sized models of $T$ with $\L_{\infty\kappa}$-Scott height $\infty$. The converse holds as well if \( \kappa > 2^{\aleph_0} \).
\end{enumerate-(1)}
\end{theorem}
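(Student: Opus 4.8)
The plan is to obtain all three parts from Shelah's classification theory \cite{Sh00}, translating its combinatorial invariants into statements about $\L_{\infty\kappa}$-Scott height by means of the Ehrenfeucht--Fra\"{\i}ss\'e characterization of Theorem~\ref{qr} (so that, throughout, ``$\M\equiv_\alpha\N$'' may be read as ``$\2\wins\EF^\kappa_\alpha(\M,\N)$''). The common engine is Shelah's decomposition: every model of a classifiable theory is assembled from a tree of small (size $\leq 2^{\aleph_0}$) submodels indexed by a subtree of $\pre{<\omega}{\kappa}$, and its isomorphism type is determined by this labeled tree together with a system of dimensions. For a shallow theory of depth $\alpha$ the decomposition trees are well-founded of rank $\leq\alpha$, whereas for a deep theory their ranks are not uniformly bounded.

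For part~(1) I would show directly that $\2\wins\EF^\kappa_{2\alpha}(\M,\N)$ forces $\M\cong\N$ for any $\M,\N\in\Mod^\kappa_T$. Given a winning strategy for $\2$, the idea is to build the isomorphism by recursion along the two decomposition trees: one round of the game is spent matching a node of the tree and enlarging the current partial isomorphism to absorb the associated small submodel, and a second round is spent certifying that the matched submodels are isomorphic and propagating the back-and-forth condition to the next level. Thus each of the $\alpha$-many levels of a rank-$\alpha$ decomposition consumes two rounds, which is exactly what produces the bound $2\alpha$. The hypothesis $\kappa>2^{\aleph_0}$ is used to guarantee that each small submodel, having size $\leq 2^{\aleph_0}<\kappa$, fits inside a single move (whose domain is required to have size $<\kappa$) and that dimensions of the appropriate size can be realized. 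By Theorem~\ref{qr} this says precisely that $S(\kappa,T,\M)\leq2\alpha$ for every $\kappa$-sized model $\M$.

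For part~(2), since $T$ is superstable deep, Shelah's coding realizes among the $\kappa$-sized models decomposition trees of every rank $<\kappa^+$, and a model whose tree has rank $\geq\beta$ can be shown, by running the analysis of part~(1) in reverse, to be $\equiv_\beta$-equivalent to a non-isomorphic companion; hence for each $\beta<\kappa^+$ there is $\M\in\Mod^\kappa_T$ with $S(\kappa,T,\M)>\beta$. This yields models of arbitrarily large Scott height, still $<\kappa^+$ because any decomposition tree has size $\leq\kappa$ and therefore rank $<\kappa^+$. For part~(3), if $T$ is not classifiable then it is unsuperstable, or superstable with DOP or OTOP; in each case the associated order property lets Shelah (see also \cite{FHK14}) construct non-isomorphic $\kappa$-sized models that are $\L_{\infty\kappa}$-equivalent, i.e.\ $\equiv_\gamma$ for all $\gamma$, so that one of them has $S(\kappa,T,\M)=\infty$. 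The converse, under $\kappa>2^{\aleph_0}$, is the contrapositive together with parts~(1) and~(2): a classifiable theory is either shallow, whence every model has Scott height $\leq2\alpha<\kappa^+$, or deep, whence the well-founded, size-$\leq\kappa$ decomposition tree of any fixed model has rank $<\kappa^+$ and bounds that model's Scott height below $\kappa^+$; either way no model attains Scott height $\infty$.

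The principal obstacle is part~(1), which rests on the full strength of Shelah's decomposition theorem and on a delicate rank bookkeeping. The hard part will be making the ``two rounds per level'' heuristic rigorous: one must verify that the back-and-forth process along the trees stays synchronized with the game so that the constant is exactly $2$, handle limit levels of the decomposition where the rank is taken as a supremum, and control the interaction between the tree structure and the dimension invariants. The remaining parts are essentially constructions that reuse this same correspondence in the unbounded-rank regime of part~(2) and the order-property regime of part~(3), and so inherit whatever care is needed for part~(1).
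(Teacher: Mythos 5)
The paper does not actually prove this theorem: it is attributed to Shelah and each part is discharged by citation --- part (1) is a special case of \cite[Theorem XIII.1.5]{Sh00}, whose ``existence property'' hypothesis is verified for countable NOTOP theories via \cite[Conclusion XII.5.14]{Sh00}; part (2) is \cite[Theorem XIII.1.8]{Sh00} with \( \mu = \lambda \); and part (3) combines \cite[Main Conclusion 0.2]{Sh87} with \cite[Theorem XIII.1.1]{Sh00}. Your sketch instead tries to reconstruct the content of those citations, and while the broad shape (decomposition trees, EF-games via Theorem~\ref{qr}) points in the right direction, as a proof it has concrete gaps that coincide exactly with the cited results.

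First, in part (1) you never verify that the decomposition you rely on is available in the required form: Shelah's theorem needs the existence property, and checking that countable NOTOP theories have it is a nontrivial step the paper handles by a separate citation. Second, in part (2) ``running the analysis of part (1) in reverse'' does not produce anything: part (1) is an upper bound (long EF-equivalence implies isomorphism), whereas part (2) requires \emph{constructing}, for each \( \beta < \kappa^+ \), a pair of non-isomorphic \( \kappa \)-sized models on which \( \2 \) survives \( \EF^\kappa_\beta \); this is a genuinely different and much harder construction, and it is the entire content of \cite[Theorem XIII.1.8]{Sh00}. Third, your argument for the converse in part (3) is wrong in the deep case: you bound the Scott height of a model of a classifiable deep theory by the rank of its ``well-founded, size-\( \leq\kappa \) decomposition tree,'' but a theory is \emph{deep} precisely when some decomposition tree fails to be well-founded, so that tree need not have a rank at all. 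That models of a classifiable (possibly deep) theory never have Scott height \( \infty \) is exactly \cite[Theorem XIII.1.1]{Sh00} and cannot be obtained from well-foundedness of the decomposition. A similar gloss occurs in the forward direction of part (3), where producing non-isomorphic \( \L_{\infty\kappa} \)-equivalent \( \kappa \)-sized models from unsuperstability, DOP, or OTOP is the whole point of \cite{Sh87}.
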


\begin{proof}
\begin{enumerate-(1)}
\item
This is a special case of  \cite[Theorem XIII.1.5]{Sh00}, the hypotheses of which come true because all countable theories with NOTOP have the existence property by  \cite[Conclusion XII.5.14]{Sh00}.
\item
This is a consequence of  \cite[Theorem XIII.1.8]{Sh00} obtained by letting $\mu=\lambda$.
\item
The first implication is \cite[Main Conclusion 0.2]{Sh87} when $T=T_1$, while the other direction  holds for every $\kappa>2^{\aleph_0}$ by  \cite[Theorem XIII.1.1]{Sh00}. \qedhere
\end{enumerate-(1)}
\end{proof}

We also need the following result by Lascar.

\begin{theorem}[Lascar, {\cite[Th\'eor\`eme 4.1]{La85}}]\label{depth}
If $T$ is classifiable shallow, the depth of $T$ is countable.
\end{theorem}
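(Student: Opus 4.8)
The plan is to recognize the depth as the foundation rank of a well-founded binary relation that can be made to live on a standard Borel (indeed Polish) space, and then to push this rank below \( \aleph_1 \) by the Kunen--Martin theorem. First I would recall the definition of depth via a \emph{deepness} relation \( \triangleleft \) on regular (stationary) types: declare \( q \triangleleft p \) when \( q \) is a regular type over some model extending a realization of \( p \), orthogonal to the canonical base of \( p \), and witnessing a strict decrease \( \mathrm{Dp}(q) < \mathrm{Dp}(p) \). With this formulation \( T \) is shallow precisely when \( \triangleleft \) is well-founded, and the depth of \( T \) is (one more than) the supremum over all regular \( p \) of the foundation ranks \( \varrho_{\triangleleft}(p) = \mathrm{Dp}(p) \). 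Thus the theorem reduces to the single assertion that this well-founded relation has countable rank.

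The next step is to transport \( \triangleleft \) onto a nice space. Working in \( T^{\mathrm{eq}} \), I would fix a countable \( \aleph_0 \)-saturated model \( M^* \), so that the space \( S(M^*) \) of complete types over \( M^* \) is compact, second-countable and zero-dimensional, hence Polish. The decisive point is that depth is \emph{computed locally}: since \( T \) is superstable, every stationary type does not fork over a finite set and is therefore based on a finite tuple, which by \( \aleph_0 \)-saturation may be conjugated into \( M^* \); as depth is an invariant of non-orthogonality, every regular type is then non-orthogonal to one lying in \( S(M^*) \), and the whole relation \( \triangleleft \) may be replaced by an equivalent relation on \( S(M^*) \) without altering any rank. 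Here NDOP and NOTOP enter to guarantee that the decomposition into regular types is faithful, so that no depth is lost in the passage. Finally, I would verify that the resulting relation is \emph{analytic}: the clause ``\( q \triangleleft p \)'' is an existential quantification, over codes for countable elementary extensions (points of \( \Mod^\omega_\L \)) and for regular types realized therein, of a Borel matrix, and such an existential over a standard Borel space is \( \boldsymbol{\Sigma}^1_1 \).

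Granting these two facts the conclusion is immediate. The relation \( \triangleleft \) is a well-founded analytic relation on the Polish space \( S(M^*) \), so by the Kunen--Martin theorem (an analytic, i.e.\ \( \aleph_0 \)-Souslin, well-founded relation has rank \( < \aleph_1 \)) its foundation rank is countable. Since every regular type has the same depth as some type in \( S(M^*) \), this bounds \( \mathrm{Dp}(p) \) uniformly for all regular \( p \), and hence the depth of \( T \) is \( < \aleph_1 \), as desired.

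The genuinely hard part is the middle paragraph: establishing that the reduction to \( S(M^*) \) preserves depth and that the resulting relation is analytic, rather than merely a well-founded relation on an abstract set of size \( 2^{\aleph_0} \) (which would only yield the useless bound \( (2^{\aleph_0})^+ \)). It is exactly the superstability and the NDOP/NOTOP hypotheses that supply the local character of forking and the faithful regular-type analysis needed for this descriptive presentation; by contrast, once analyticity and well-foundedness are in hand, the Kunen--Martin endgame forcing the rank below \( \aleph_1 \) is entirely routine.
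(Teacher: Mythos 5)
First, a point of comparison: the paper does not prove this statement at all --- it is imported verbatim from Lascar's \emph{Quelques pr\'ecisions sur la DOP et la profondeur d'une th\'eorie} as Th\'eor\`eme 4.1, so there is no in-paper argument to measure yours against. Judged on its own, your strategy (realize the depth as the foundation rank of a well-founded relation on a Polish space of types and invoke Kunen--Martin) is a sensible one, but as written it has two problems. The lesser one is that your deepness relation is circularly defined: you build the clause ``\( \mathrm{Dp}(q) < \mathrm{Dp}(p) \)'' into \( \triangleleft \), yet \( \mathrm{Dp} \) is supposed to \emph{be} the foundation rank of \( \triangleleft \). The correct relation (Shelah's) is ``\( q \) is a regular type over \( Ma \) for some model \( M \supseteq \operatorname{dom}(p) \) and some \( a \models p|M \), with \( q \) orthogonal to \( M \)'' --- no reference to depth, and orthogonality to the \emph{model} \( M \), not to the canonical base of \( p \). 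Dropping the depth clause is not cosmetic: with it present the relation is not even well defined, and it would in any case destroy any hope of definability. (Also, NDOP and NOTOP play no role here; the depth is defined for any superstable theory whose deepness relation is well founded, and the decomposition theorem for models is irrelevant to bounding its rank.)

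The serious gap is your one-line claim that the matrix of the definition is Borel, so that the existential closure over \( \Mod^\omega_\L \) is \( \boldsymbol{\Sigma}^1_1 \). The predicates occurring in that matrix --- ``\( a \models p|M \) is a nonforking extension'', ``\( q \) is regular'', ``\( q \) is orthogonal to \( M \)'', ``\( q \) is non-orthogonal to \( q' \)'' --- are themselves defined by quantification over arbitrary parameter sets and models, so a priori they are only analytic or co-analytic on the relevant type spaces; the naive complexity of your relation is then \( \boldsymbol{\Sigma}^1_2 \), i.e.\ \( \aleph_1 \)-Souslin, and Kunen--Martin only yields rank \( < \aleph_2 \), which does not prove the theorem. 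To get the bound \( \aleph_1 \) you must actually show that the relation is \( \aleph_0 \)-Souslin, which means proving that forking, regularity, and (non-)orthogonality admit Borel (or at least analytic-in-the-right-polarity) presentations over a fixed countable saturated model --- this is precisely where the finite character of forking in superstable theories and the local witnessing of non-orthogonality between regular types must be deployed, and it is the entire content of the proof. You correctly flag the middle paragraph as ``the genuinely hard part'', but the specific obstruction --- that the matrix is not obviously Borel and that without this the Kunen--Martin endgame gives the wrong bound --- is never identified or addressed, so the argument as it stands does not close.
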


Summing up the above results,  we get the following corollary.
\begin{cor}\label{scottdepth}
Let $\kappa>2^{\aleph_0}$ be regular. Then $S(\kappa,T)<\infty$ if and only if $T$ is classifiable. Furthermore:
\begin{itemize}
\item 
if $T$ is classifiable shallow of depth $\alpha$, then $S(\kappa,T)\leq2\alpha<\omega_1$;
\item 
if $T$ is classifiable deep, then $S(\kappa,T)=\kappa^+$.
\end{itemize}
\end{cor}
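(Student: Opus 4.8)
The plan is to derive the three assertions directly from the quoted Shelah theorem and Lascar's Theorem~\ref{depth}, the only genuine care being needed in handling the convention under which \( S(\kappa,T) \) is either an ordinal \( \leq \kappa^+ \) or the symbol \( \infty \). First I would record that, since \( \kappa > 2^{\aleph_0} > \omega \) and \( \kappa \) is regular by assumption, all three items of the Shelah theorem stated above and Theorem~\ref{depth} are applicable.

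For the main equivalence, I would unfold \( S(\kappa,T) = \sup \{ S(\kappa,T,\M) \mid \M \in \Mod^\kappa_T \} \) and observe that \( S(\kappa,T) = \infty \) precisely when some \( \kappa \)-sized model of \( T \) has Scott height \( \infty \): if every model has an ordinal Scott height (necessarily \( \leq \kappa^+ \)) then the supremum is again an ordinal \( \leq \kappa^+ \), hence \( < \infty \), whereas a single model of height \( \infty \) forces the supremum to be \( \infty \), as \( \infty \) is maximal. Item~(3) of the Shelah theorem, read with \( \kappa > 2^{\aleph_0} \) so that both of its implications hold, says exactly that \( T \) is not classifiable if and only if some \( \kappa \)-sized model of \( T \) has Scott height \( \infty \). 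Combining the two observations yields \( S(\kappa,T) = \infty \) iff \( T \) is not classifiable, i.e.\ the desired equivalence.

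For the first bullet, I would invoke item~(1): when \( T \) is classifiable shallow of depth \( \alpha \), every \( \kappa \)-sized model has Scott height \( \leq 2\alpha \), so \( S(\kappa,T) \leq 2\alpha \). Lascar's Theorem~\ref{depth} gives \( \alpha < \omega_1 \), and since \( \alpha < \omega_1 \) implies \( 2\alpha < \omega_1 \), we conclude \( S(\kappa,T) \leq 2\alpha < \omega_1 \). For the second bullet, item~(2) applies because a classifiable deep theory is in particular superstable deep and \( \kappa \geq 2^{\aleph_0} \); it produces \( \kappa \)-sized models of arbitrarily large Scott height below \( \kappa^+ \), whence \( S(\kappa,T) \geq \kappa^+ \). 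On the other hand a classifiable deep theory is classifiable, so by the equivalence already established \( S(\kappa,T) \neq \infty \), and thus \( S(\kappa,T) \leq \kappa^+ \) by the convention. The two inequalities together give \( S(\kappa,T) = \kappa^+ \).

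The step I expect to require the most care is the bookkeeping around \( \infty \): the whole statement hinges on the fact that a supremum of ordinals each \( \leq \kappa^+ \) can never equal \( \infty \), while a single model of height \( \infty \) already pushes \( S(\kappa,T) \) to \( \infty \). This is exactly what lets us separate the classifiable deep case (value precisely \( \kappa^+ \)) from the non-classifiable case (value \( \infty \)), which underlies the distinction between the two rightmost columns of Table~\ref{tab:sumup}; everything else is a direct transcription of the cited results.
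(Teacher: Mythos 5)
Your proof is correct and follows exactly the route the paper intends: the corollary is presented there as an immediate consequence of the quoted Shelah theorem together with Lascar's Theorem~\ref{depth} (the paper offers no further argument beyond ``summing up the above results''), and your write-up is a careful transcription of that, including the only point needing attention, namely the bookkeeping for \( \infty \) versus the bound \( S(\kappa,T)\leq\kappa^+ \). No gaps.
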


Combining Corollary~\ref{scottdepth} with Theorem~\ref{orig3} we immediately obtain the following result, yielding in particular to Theorem~\ref{thm:main} and refining Theorem~\ref{thm:FHK} (that is, \cite[Theorem 63]{FHK14}).

\begin{theorem}\label{thm:DMGT}
Let $\kappa^{<\kappa}=\kappa>2^{\aleph_0}$. Then
\begin{itemize}
\item 
if $T$ is classifiable shallow of depth $\alpha$, then ${\cong_T^\kappa} \in \boldsymbol{\Pi}^0_{4\alpha} ((\Mod^\kappa_T)^2)$;
\item 
if $T$ is not classifiable shallow, then $\cong_T^\kappa$ is not \( \kappa^+ \)-Borel.
\end{itemize}
In particular, $T$ is classifiable shallow if and only if $\mathrm{rk}_B({\cong_T^\kappa}) < \omega_1$.
\end{theorem}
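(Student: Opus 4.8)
The plan is to obtain everything by chaining the two results already proved in this section: Corollary~\ref{scottdepth}, which recasts Shelah's classification as statements about the Scott height $S(\kappa,T)$, and Theorem~\ref{orig3}, which converts between $S(\kappa,T)$ and the Borel rank $B(\kappa,T)=\mathrm{rk}_B(\cong^\kappa_T)$. No new construction is required; the content lies in matching up the bounds and checking the ordinal arithmetic. Note first that the hypotheses of Theorem~\ref{orig3} are met here: we have $\kappa^{<\kappa}=\kappa$ by assumption, and $|\L|=\aleph_0<\aleph_1\leq 2^{\aleph_0}<\kappa$ since $\L$ is countable.

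For the first bullet, I would assume $T$ is classifiable shallow of depth $\alpha$. Corollary~\ref{scottdepth} then yields $S(\kappa,T)\leq 2\alpha$, while Theorem~\ref{orig3}\ref{orig3-2} gives ${\cong^\kappa_T}\in\boldsymbol{\Pi}^0_{2S(\kappa,T)}((\Mod^\kappa_T)^2)$. Combining these with the associativity identity $2\cdot(2\alpha)=4\alpha$ and the monotonicity $\boldsymbol{\Pi}^0_\beta\subseteq\boldsymbol{\Pi}^0_\gamma$ for $\beta\leq\gamma$ recorded at the beginning of Section~\ref{sec:genDST}, I get ${\cong^\kappa_T}\in\boldsymbol{\Pi}^0_{4\alpha}((\Mod^\kappa_T)^2)$, as claimed.

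For the second bullet, I would split the hypothesis ``$T$ is not classifiable shallow'' into the two Shelah cases. If $T$ is not classifiable, then $S(\kappa,T)=\infty$ by Corollary~\ref{scottdepth}; if $T$ is classifiable deep, then $S(\kappa,T)=\kappa^+$ by the same corollary. In both cases $S(\kappa,T)\not<\kappa^+$, so the contrapositive of the last assertion of Theorem~\ref{orig3}\ref{orig3-1}---that $\kappa^+$-Borelness of $\cong^\kappa_T$ forces $S(\kappa,T)<\kappa^+$---shows that $\cong^\kappa_T$ is not $\kappa^+$-Borel.

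Finally, for the ``in particular'' clause I would argue both implications. If $T$ is classifiable shallow, Lascar's Theorem~\ref{depth} bounds its depth $\alpha$ below $\omega_1$, so $4\alpha$ is still countable and the first bullet gives $\mathrm{rk}_B(\cong^\kappa_T)\leq 4\alpha<\omega_1$. Conversely, $\mathrm{rk}_B(\cong^\kappa_T)<\omega_1$ entails in particular $\mathrm{rk}_B(\cong^\kappa_T)\neq\infty$, i.e.\ $\cong^\kappa_T$ is $\kappa^+$-Borel, whence the contrapositive of the second bullet forces $T$ to be classifiable shallow. The argument presents no genuine obstacle; the only points demanding care are the bookkeeping just mentioned---verifying the hypotheses of Theorem~\ref{orig3}, the ordinal identity $2\cdot 2\alpha=4\alpha$ together with the countability of $4\alpha$ for countable $\alpha$, and the precise case division behind ``not classifiable shallow''.
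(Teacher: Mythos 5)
Your proposal is correct and follows exactly the paper's route: the paper derives Theorem~\ref{thm:DMGT} precisely by combining Corollary~\ref{scottdepth} with Theorem~\ref{orig3} (plus Lascar's Theorem~\ref{depth} for the ``in particular'' clause), and your bookkeeping — the hypothesis checks, the identity $2\cdot(2\alpha)=4\alpha$, and the case split behind ``not classifiable shallow'' (noting that $\kappa^{<\kappa}=\kappa$ gives the regularity needed for Corollary~\ref{scottdepth}) — is all sound.
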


This gives us a non-trivial upper bound for the Borel rank of the isomorphism relation on the \( \kappa \)-sized models of a classifiable shallow theory. We remark once again that it does not depend on the fixed size $\kappa$ of the models. Here we collect a few sample applications of this observation (\( \kappa \) is always assumed to be any cardinal satisfying \( \kappa^{< \kappa} = \kappa > 2^{\aleph_0} \)). The source for these examples (and more!) is \cite{HL97}.
\begin{enumerate-(i)}
\item 
The classifiable theory $T$ of the additive group of integers has depth $1$, thus $\cong_T^\kappa$ is $\boldsymbol{\Pi}^0_{4}$. 
\item 
Fix $\beta<\omega_1$, suppose our language contains binary relation symbols $E_\alpha$ for every $\alpha<\beta$, and define a theory $T^\beta$ such that:
\begin{itemize}
\item[--]
$E_\alpha$ is an equivalence relation for every $\alpha<\beta$;
\item[--] 
if $\gamma<\delta<\beta$, then $E_\gamma$ refines $E_\delta$ and every class in $E_\delta$ contains infinitely many classes of $E_\gamma$;
\item[--]
each class of $E_0$ is infinite.
\end{itemize}
It can be shown that $T^\beta$ is a classifiable shallow theory of depth $\beta+1$, thus $\cong_{T^\beta}^\kappa$ is $\boldsymbol{\Pi}^0_{4\beta+4}$. Furthermore, the disjoint union of theories $T^{\beta_i}$ with $\beta_i$ cofinal in $\gamma$ is a classifiable shallow theory $\bar T^\gamma$ of depth $\gamma$, thus $\cong_{\bar T^\gamma}^\kappa$ is $\boldsymbol{\Pi}^0_{4\gamma}$.
\end{enumerate-(i)}  

In principle,  this approach could be reversed. The above Descriptive Main Gap Theorem~\ref{thm:DMGT} could provide different means to study the stability properties of a theory \( T \). Namely, 
if one succeeds, using descriptive set-theoretical methods,  in proving that \( \cong^\kappa_T \) is \( \kappa^+ \)-Borel for a suitable uncountable \( \kappa \), then we can conclude that \( T \) is classifiable shallow; and if one can also compute \( \mathrm{rk}_B(\cong^\kappa_T ) \), then we have a lower bound for (four times) the depth of $T$. This method could thus turn out to be useful to isolate ``natural'' classifiable shallow theories with higher and higher depth, a notoriously tricky problem. The advantages of this new approach compared to the classical ones would be the following:
\begin{itemize}
\item
There is a lot of freedom in choosing the cardinal \( \kappa \), it is enough that \( \kappa^{< \kappa} = \kappa > 2^{\aleph_0} \).
\item
There is also some freedom in the choice of the set-theoretic universe to work in. For example, any forcing extension of the universe in which all cardinals and the continuum are preserved would be fine.
\item
It could be easier to compute the Borel rank of \( \cong^\kappa_T \) rather than directly computing the depth of \( T \). In particular, we do not need to analyze all models of \( T \) individually, it suffices to look for a ``Borel'' way to classify them up to isomorphism.
\end{itemize}

\section{Further results and open problems}

\subsection{Possible complexities for \( \cong^\kappa_{\ElCl} \)} \label{subsec:possiblecomplexities}

The fact that \( B(\kappa,\ElCl)  = \alpha\) for some ordinal \(  \alpha < \kappa^+ \) tells us that \( {\cong^\kappa_{\ElCl}} \) is either a true  \(  \boldsymbol{\Delta}^0_\alpha \) set, or a  true \( \boldsymbol{\Sigma}^0_\alpha \) set, or  a true \( \boldsymbol{\Pi}^0_\alpha \) set, but it does not distinguish among the three possibilities. Below we provide some additional information on this finer classification of complexities.

The case of \( \boldsymbol{\Sigma}^0_1 \) has been dealt with in Proposition~\ref{prop:open} and Section~\ref{subsec:categoricity}: \( \cong^\kappa_{\ElCl} \) cannot be a true open set, and if \( \ElCl \) is axiomatized by a complete first-order theory \( T \) then \( \cong^\kappa_{\ElCl} \) is (cl)open if and only if \( T \) is \( \kappa \)-categorical. In Example~\ref{xmp:allcomplexities} we have instead seen that for any of the other pointclasses there is an equivalence relation on \( \pre{\kappa}{2} \) lying exactly in that class. 
Quite surprisingly, the next result shows, in particular, that this is no more true if we restrict our attention to isomorphism relations over models of a countable complete first-order theory \( T \) or of an \( \L_{\kappa^+ \kappa} \)-sentence \( \upvarphi \): if \( \kappa^{< \kappa} = \kappa \), then \( \cong^\kappa_T \) and \( \cong^\kappa_\upvarphi \) cannot be a true \( \boldsymbol{\Sigma}^0_\alpha \) set if \( \alpha \) is a limit ordinal.

\begin{theorem} \label{thm:limit}
Let \( \kappa^{< \kappa} = \kappa \), assume that \( |\L| < \kappa \), and let \( \alpha < \kappa^+ \) be a limit ordinal. Then there is no invariant set \( \ElCl \subseteq \Mod^\kappa_\L \) for which \( \cong^\kappa_{\ElCl} \) is a true \( \boldsymbol{\Sigma}^0_\alpha \) set. 
\end{theorem}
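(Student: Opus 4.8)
The plan is to argue by contradiction, using the tight two-sided relationship between the Borel rank $B(\kappa,\ElCl)$ and the Scott height $S(\kappa,\ElCl)$ furnished by Theorem~\ref{orig3} and Corollary~\ref{cor:orig3}. Since $\kappa^{<\kappa}=\kappa$ and $|\L|<\kappa$ are exactly the standing hypotheses of those results, they are available here. So suppose towards a contradiction that $\ElCl \subseteq \Mod^\kappa_\L$ is an invariant set for which $\cong^\kappa_{\ElCl}$ is a true $\boldsymbol{\Sigma}^0_\alpha$ set, with $\alpha < \kappa^+$ a limit ordinal.

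The first step is to read off from the hypothesis the two facts I will need. By definition of ``true $\boldsymbol{\Sigma}^0_\alpha$ set'', $\cong^\kappa_{\ElCl} \in \boldsymbol{\Sigma}^0_\alpha(\ElCl^2)$ but it lies in no class properly contained in $\boldsymbol{\Sigma}^0_\alpha(\ElCl^2)$; in particular it is not in $\boldsymbol{\Delta}^0_\alpha(\ElCl^2) = \boldsymbol{\Sigma}^0_\alpha(\ElCl^2) \cap \boldsymbol{\Pi}^0_\alpha(\ElCl^2)$, and therefore, being already $\boldsymbol{\Sigma}^0_\alpha$, it is \emph{not} $\boldsymbol{\Pi}^0_\alpha(\ElCl^2)$. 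Moreover no $\boldsymbol{\Sigma}^0_\beta$ or $\boldsymbol{\Pi}^0_\beta$ with $\beta < \alpha$ can contain it (all such classes are contained in $\boldsymbol{\Delta}^0_\alpha$), so its Borel rank is exactly $\alpha$, i.e.\ $B(\kappa,\ElCl) = \alpha$. Thus I record: (i) $B(\kappa,\ElCl) = \alpha$ is a limit ordinal $< \kappa^+$, and (ii) $\cong^\kappa_{\ElCl} \notin \boldsymbol{\Pi}^0_\alpha(\ElCl^2)$.

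The second step closes the rank--height gap at the limit level and then reopens it as an upper bound. Since $B(\kappa,\ElCl) = \alpha$ is a limit ordinal below $\kappa^+$, Corollary~\ref{cor:orig3}\ref{cor:orig3-2} applies and gives $S(\kappa,\ElCl) = B(\kappa,\ElCl) = \alpha$. In particular $S(\kappa,\ElCl) < \kappa^+$, so I may feed this back into Theorem~\ref{orig3}\ref{orig3-2} to conclude $\cong^\kappa_{\ElCl} \in \boldsymbol{\Pi}^0_{2 S(\kappa,\ElCl)}(\ElCl^2) = \boldsymbol{\Pi}^0_{2\alpha}(\ElCl^2)$. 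The final step is purely ordinal-arithmetic: for a limit ordinal $\alpha$ one has $2 \cdot \alpha = \alpha$ (this is precisely the identity already invoked inside the proof of Corollary~\ref{cor:orig3}\ref{cor:orig3-2}), so $\cong^\kappa_{\ElCl} \in \boldsymbol{\Pi}^0_\alpha(\ElCl^2)$, directly contradicting (ii).

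I do not expect a genuine obstacle: the whole argument is a short chain through previously established results, and its content is entirely the symmetry of Corollary~\ref{cor:orig3}\ref{cor:orig3-2} (which collapses $S$ and $B$ at limit stages) combined with the multiplicative stability $2\alpha=\alpha$ of limit ordinals. The only point that deserves care, and which I would state explicitly, is the reading of ``true $\boldsymbol{\Sigma}^0_\alpha$'': because $\boldsymbol{\Pi}^0_\alpha$ is itself not contained in $\boldsymbol{\Sigma}^0_\alpha$, it does not literally appear among the ``smaller'' classes in the definition, so the exclusion $\cong^\kappa_{\ElCl}\notin\boldsymbol{\Pi}^0_\alpha$ must be derived through $\boldsymbol{\Delta}^0_\alpha = \boldsymbol{\Sigma}^0_\alpha \cap \boldsymbol{\Pi}^0_\alpha$. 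This is exactly the step that turns the conclusion $\cong^\kappa_{\ElCl}\in\boldsymbol{\Pi}^0_\alpha$ into an honest contradiction.
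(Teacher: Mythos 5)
Your proof is correct and is essentially the paper's own argument: both derive $B(\kappa,\ElCl)=\alpha$ from the trueness hypothesis, invoke Corollary~\ref{cor:orig3}\ref{cor:orig3-2} to get $S(\kappa,\ElCl)=\alpha$, feed this into Theorem~\ref{orig3}\ref{orig3-2} to place $\cong^\kappa_{\ElCl}$ in $\boldsymbol{\Pi}^0_{2\alpha}=\boldsymbol{\Pi}^0_\alpha$, and conclude it is $\boldsymbol{\Delta}^0_\alpha$, contradicting trueness. Your explicit unpacking of ``true $\boldsymbol{\Sigma}^0_\alpha$'' via $\boldsymbol{\Delta}^0_\alpha=\boldsymbol{\Sigma}^0_\alpha\cap\boldsymbol{\Pi}^0_\alpha$ is a point the paper leaves implicit, but the route is the same.
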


\begin{proof}
Suppose that \( {\cong}^\kappa_{\ElCl} \) is \( \boldsymbol{\Sigma}^0_\alpha \). Then \( B(\kappa,\ElCl) = \alpha \), whence \( S(\kappa,\ElCl) = \alpha \) by Corollary~\ref{cor:orig3}\ref{cor:orig3-2}, so that  \( {\cong}^\kappa_\ElCl \) is \( \boldsymbol{\Pi}^0_\alpha \) by Theorem~\ref{orig3}\ref{orig3-2}. This shows that \( {\cong}^\kappa_\ElCl \) is \( \boldsymbol{\Delta}^0_\alpha \), i.e.\ it is not a true \( \boldsymbol{\Sigma}^0_\alpha \) set. 
\end{proof}

Notice that Theorem~\ref{thm:limit} applies to \( \kappa = \omega \) as well: to the best of our knowledge, this is a new observation also in this context. In contrast, we will see in Proposition~\ref{prop:xmp} that, working in \( \mathsf{ZFC} \) alone, \( \cong^\kappa_{\ElCl} \) may be a true \( \boldsymbol{\Pi}^0_\alpha \) set or a true \( \boldsymbol{\Delta}^0_\alpha \) set for appropriate \(\alpha\)'s, even when restricting the attention to first-order elementary classes \( \ElCl = \Mod^\kappa_T \).

The next result provides other nontrivial limitations to the possible complexities of \( \cong^\kappa_{\ElCl} \) (for some specific cardinals \( \kappa \)) when \( \ElCl \) is first-order axiomatizable. 

\begin{prop} \label{prop:GCH}
Assume that \( \kappa = \aleph_\gamma \) is such that \( \kappa^{< \kappa} = \kappa \) and \( \beth_{\omega_1} \left( |\gamma| \right) \leq \kappa \). Then there is no countable complete first-order theory \( T \) such that \( {\cong^\kappa_T } \) is a true \( \boldsymbol{\Sigma}^0_\alpha \) or a true \( \boldsymbol{\Pi}^0_\alpha \) for any \( 1 \leq \alpha < \kappa^+ \).
\end{prop}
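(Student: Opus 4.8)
The plan is to dichotomize according to the two ways $\cong^\kappa_T$ could fail the conclusion and, in each case, to exploit that under the stated hypotheses $\cong^\kappa_T$ has very few classes. First observe that $\beth_{\omega_1}(|\gamma|)\le\kappa$ forces $\kappa>2^{\aleph_0}$ (already $\beth_{\omega_1}(1)>2^{\aleph_0}$), so Theorem~\ref{thm:FHK} applies: if $T$ is not classifiable shallow then $\cong^\kappa_T$ is not $\kappa^+$-Borel, hence it is neither a true $\boldsymbol{\Sigma}^0_\alpha$ nor a true $\boldsymbol{\Pi}^0_\alpha$ set for any $\alpha<\kappa^+$, these being $\kappa^+$-Borel by definition. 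I may thus assume $T$ is classifiable shallow. Then Shelah's uniform bound (Theorem~\ref{thm:Shelah}\ref{Shelah1} together with Theorem~\ref{depth}, as spelled out in Remark~\ref{rmk:She}) gives $I(\kappa,T)<\beth_{\omega_1}(|\gamma|)\le\kappa$, so $\cong^\kappa_T$ has only $\lambda<\kappa$ classes; fix representatives and write $\Mod^\kappa_T=\bigsqcup_{i<\lambda}C_i$ with $C_i=[\M_i]_\cong$. Put $\alpha=B(\kappa,T)<\kappa^+$. Since $\alpha$ is exactly the Borel rank, it suffices to prove $\cong^\kappa_T\in\boldsymbol{\Delta}^0_\alpha$: this immediately excludes that $\cong^\kappa_T$ be a true $\boldsymbol{\Sigma}^0_\alpha$ or true $\boldsymbol{\Pi}^0_\alpha$ set, and for $\beta\ne\alpha$ membership in $\boldsymbol{\Delta}^0_\alpha$ excludes a true $\boldsymbol{\Sigma}^0_\beta$ or $\boldsymbol{\Pi}^0_\beta$ set as well.

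The case $\cong^\kappa_T\in\boldsymbol{\Sigma}^0_\alpha$ is then clean. By Fact~\ref{fct:fromertoclass} each class $C_i$ is a continuous preimage of $\cong^\kappa_T$, hence $C_i\in\boldsymbol{\Sigma}^0_\alpha$. As $\boldsymbol{\Sigma}^0_\alpha$ is closed under $\kappa$-unions and $\lambda<\kappa$, the complement $\Mod^\kappa_T\setminus C_i=\bigcup_{j\ne i}C_j$ is again $\boldsymbol{\Sigma}^0_\alpha$, so each $C_i\in\boldsymbol{\Delta}^0_\alpha$. Using closure of $\boldsymbol{\Delta}^0_\alpha$ under preimages by the two projections and under finite intersections, $C_i\times C_j\in\boldsymbol{\Delta}^0_\alpha$ for all $i,j$; therefore both $\cong^\kappa_T=\bigcup_i C_i\times C_i$ and its complement $\bigcup_{i\ne j}C_i\times C_j$ are $\kappa$-unions of $\boldsymbol{\Delta}^0_\alpha$ sets, whence both are $\boldsymbol{\Sigma}^0_\alpha$. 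Thus $\cong^\kappa_T\in\boldsymbol{\Delta}^0_\alpha$, as desired.

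The remaining case $\cong^\kappa_T\in\boldsymbol{\Pi}^0_\alpha$ is where I expect the real difficulty, precisely because $\boldsymbol{\Pi}^0_\alpha$ is \emph{not} closed under $\kappa$-unions, so the dual of the previous paragraph is false: a $\kappa$-sized partition of the space into $\boldsymbol{\Pi}^0_\alpha$ pieces need not consist of $\boldsymbol{\Delta}^0_\alpha$ pieces (e.g.\ one true $\boldsymbol{\Pi}^0_2$ piece together with $\boldsymbol{\Delta}^0_2$ pieces). Hence one must use that the $C_i$ are genuine isomorphism classes. The target is again to show each $C_i\in\boldsymbol{\Delta}^0_\alpha$ and then conclude $\cong^\kappa_T\in\boldsymbol{\Sigma}^0_\alpha\cap\boldsymbol{\Pi}^0_\alpha=\boldsymbol{\Delta}^0_\alpha$ exactly as above. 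Here the isomorphism-specific input is that, by Theorem~\ref{qr} and the rank computation inside the proof of Theorem~\ref{link2}, each class $C_i=\{\N\in\Mod^\kappa_T\mid\N\equiv_{S(\kappa,T,\M_i)}\M_i\}$ lies in $\boldsymbol{\Pi}^0_{2\,S(\kappa,T,\M_i)}$. Consequently every class with $2\,S(\kappa,T,\M_i)<\alpha$ is already in $\boldsymbol{\Pi}^0_{<\alpha}\subseteq\boldsymbol{\Delta}^0_\alpha$ for free, and only the classes with $2\,S(\kappa,T,\M_i)\ge\alpha$ remain.

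Those remaining classes of large individual Scott height are the crux, and controlling them is equivalent to closing the factor-$2$ gap between $S(\kappa,T)$ and $B(\kappa,T)$ for these specific cardinals. The route I would pursue is a generalized Baire-category analysis, in the spirit of the remark following Theorem~\ref{thm:catchar}: since $I(\kappa,T)<\kappa$, if all classes were $\kappa$-meager then $\Mod^\kappa_T$ would be $\kappa$-meager in itself, which is impossible as $\Mod^\kappa_T$ is not $\kappa$-meager in itself; this already disposes of $\alpha=1$ (a closed comeager class is forced to be clopen, so $T$ is $\kappa$-categorical and $\cong^\kappa_T$ is $\boldsymbol{\Delta}^0_1$). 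For general $\alpha$ I would combine the few-classes bound with a Vaught-transform argument (as in the proof of Theorem~\ref{lopezext}, where an invariant $\boldsymbol{\Pi}^0_\alpha$ set is axiomatized by a sentence of quantifier rank $\le\alpha$) to upgrade each high-Scott-height class from $\boldsymbol{\Pi}^0_\alpha$ to $\boldsymbol{\Delta}^0_\alpha$, so that the $\boldsymbol{\Sigma}^0_\alpha$-union argument of the second paragraph can be run verbatim. The main obstacle is exactly this last upgrade: ruling out that a single isomorphism class be a \emph{true} $\boldsymbol{\Pi}^0_\alpha$ set, which is where the interplay between the number of classes, the bound $\cong^\kappa_T\in\boldsymbol{\Pi}^0_{2S(\kappa,T)}$, and the non-meagerness of $\Mod^\kappa_T$ must be used decisively.
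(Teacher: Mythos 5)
Your reduction to the classifiable shallow case, the bound $I(\kappa,T)<\kappa$, and your treatment of the $\boldsymbol{\Sigma}^0_\alpha$ case all match the paper's argument. But the $\boldsymbol{\Pi}^0_\alpha$ case, which you correctly identify as the remaining issue and then leave unresolved (your last paragraph is a research plan, not a proof), has a much shorter resolution that you miss, and it is the entire content of the paper's proof. You conflate ``$\boldsymbol{\Pi}^0_\alpha$ is not closed under $\kappa$-unions'' with ``the dual of the $\boldsymbol{\Sigma}^0_\alpha$ argument fails.'' The point is that the complement of a single class is a union of \emph{strictly fewer than} $\kappa$ many classes, and under $\kappa^{<\kappa}=\kappa$ the class $\boldsymbol{\Pi}^0_\alpha$ \emph{is} closed under unions of size $<\kappa$ --- this is the generalized analogue of the classical fact that $\boldsymbol{\Pi}^0_\alpha$ is closed under finite unions (write each $A_i$ as $\bigcap_{j<\kappa}B_{i,j}$ with $B_{i,j}\in\boldsymbol{\Sigma}^0_{<\alpha}$ and distribute: $\bigcup_{i<\lambda}A_i=\bigcap_{f\in\pre{\lambda}{\kappa}}\bigcup_{i<\lambda}B_{i,f(i)}$, an intersection of $\kappa^\lambda=\kappa$ many $\boldsymbol{\Sigma}^0_{<\alpha}$ sets). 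Hence each class is $\boldsymbol{\Delta}^0_\alpha$ exactly as in your $\boldsymbol{\Sigma}^0_\alpha$ case, both $\cong^\kappa_T$ and its complement are $\boldsymbol{\Sigma}^0_\alpha$, and $\cong^\kappa_T\in\boldsymbol{\Delta}^0_\alpha$. Your counterexample scenario (one true $\boldsymbol{\Pi}^0_2$ piece among $\boldsymbol{\Delta}^0_2$ pieces) requires a partition into $\kappa$ many pieces, which is precisely what Remark~\ref{rmk:She} rules out here. None of the Scott-height, Vaught-transform, or Baire-category machinery you propose is needed.

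One caveat in your favour: the distributivity computation above only lands back in $\boldsymbol{\Pi}^0_\alpha$ when the ranks $\sup_i\beta_{i,f(i)}$ stay below $\alpha$, which is automatic for successor $\alpha$ (and for limit $\alpha$ with $\cf(\alpha)=\kappa$) but requires extra care when $\alpha$ is a limit of cofinality $<\kappa$; the paper states the closure property without this proviso. So your instinct that the $\boldsymbol{\Pi}^0_\alpha$ case is where the subtlety lives is not entirely misplaced --- but the argument you would need is the $<\kappa$-union closure, not the programme you sketch, and as written your proof of the proposition is incomplete.
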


\begin{proof}	
Let \( T \) be any countable complete first-order theory, and assume that \linebreak\mbox{\( \mathrm{rk}_B({\cong}^\kappa_T) = \alpha \)} for some \( 1 \leq \alpha < \kappa^+ \). Then for all \( \M \in \Mod^\kappa_T \) 
we also have \( \mathrm{rk}_B([\M]_{\cong}) = \alpha \) by Fact~\ref{fct:fromertoclass}. By 
Remark~\ref{rmk:She}  there are \( <\kappa \)-many \mbox{\( \cong^\kappa_T \)-equivalence} classes, hence the complement of any $\cong^\kappa_T$-equivalence class is a union of $<\kappa$-many of them.  
Since
\( \kappa^{< \kappa} = \kappa \) implies that \( \boldsymbol{\Pi}^0_\alpha(\Mod^\kappa_T) \) is 
closed under unions of size \( < \kappa \) (and the same trivially holds for 
\( \boldsymbol{\Sigma}^0_\alpha(\Mod^\kappa_T) \)), it follows that 
\mbox{\( [\M]_{\cong} \in \boldsymbol{\Delta}^0_\alpha(\Mod^\kappa_T) \)} for every 
\mbox{\( \M \in \Mod^\kappa_T \).} By Fact~\ref{fct:fromclasstoer} and the fact that there are \( < \kappa \)-many \mbox{\( \cong^\kappa_T \)-equivalence} classes, we then conclude that both \( \cong^\kappa_T \) and \( (\Mod^\kappa_T)^2 \setminus {\cong^\kappa_T} \) are \( \boldsymbol{\Sigma}^0_\alpha \), whence \( \cong^\kappa_T \) is \( \boldsymbol{\Delta}^0_\alpha \).
\end{proof}

\begin{remark} \label{rmk:GCH}
As argued in Remark~\ref{rmk:She} it is not difficult to find cardinals satisfying the hypothesis of Proposition~\ref{prop:GCH}. For example, under \( \mathsf{GCH} \) it is enough to pick any \( \omega_1 \leq \delta, \gamma \in \On \) with \( |\gamma| \geq |\delta| \) and \(\delta\) a \emph{successor} ordinal, and then set \( \kappa = \aleph_{\gamma+\delta} \) (the requirement that \( \delta \) be successor is to ensure that \( \kappa^{< \kappa} = \kappa \)).
\end{remark}

Motivated by the above partial results, we end this section with the following very general question (compare it with Proposition~\ref{prop:xmp} below).

\begin{question}
For which infinite cardinals \( \kappa \) and classes \( \boldsymbol{\Gamma} \in \{ \boldsymbol{\Sigma}^0_\alpha,\boldsymbol{\Pi}^0_\alpha , \boldsymbol{\Delta}^0_\alpha \} \) with \( 1 \leq \alpha < \kappa^+  \)  there is an invariant set \( \ElCl \subseteq \Mod^\kappa_\L \) such that \( \cong^\kappa_{\ElCl} \) is a true \( \boldsymbol{\Gamma} \) set? In particular, is there any \( \ElCl \) and \( \kappa \) as above such that \( \cong^\kappa_\ElCl \) is a true \( \boldsymbol{\Sigma}^0_\alpha \) set for some \( 1 \leq \alpha < \kappa^+ \), at least consistently? If yes, can \( \ElCl \) be taken to be a first-order elementary class or an \( \L_{\kappa^+ \kappa} \)-elementary class?
\end{question}

\subsection{An example} \label{subsec:someexamples}

In this section we show that the values of \( B(\kappa,\ElCl) \) and \( S(\kappa,\ElCl) \) may depend on the cardinal \( \kappa \), and that they may differ from each other when they are successor ordinals (compare this with Corollary~\ref{cor:orig3}\ref{cor:orig3-2}). In particular, this can happen even when restricting to invariant sets of the form \( \Mod^\kappa_T \) for \( T \) a countable complete first-order theory in a finite language.

Let \( \L = \{ P \} \) be the language consisting of just one relational symbol, and let \( T \) be the countable complete first-order \( \L \)-theory asserting that there are infinitely many elements which satisfy \( P \) and infinitely many elements which do not. The isomorphism type of a model \( \M \) of \( T \) is uniquely determined by the cardinality of \( P^\M \) and of its complement. In particular, the cardinality of (at least) one of these two sets must equal the size of \( \M \), while the other set may have any intermediate infinite cardinality. Thus if we consider models of size \( \kappa = \aleph_\alpha \) then there are \mbox{\( | \alpha | \)-many} isomorphism types if \( \alpha \geq \omega \), and \( 2n+1 \)-many ones if \( \alpha = n < \omega \); in particular, there are always \( \leq \kappa \)-many of them, and if \( \kappa \) is not a fixed point of the \( \aleph \) function, then there are \( < \kappa \)-many ones. Notice also that \( T \) is \( \aleph_0 \)-categorical but not uncountably categorical.

\begin{prop} \label{prop:xmp}
Let \( \kappa^{< \kappa} = \kappa > \omega \). 
\begin{enumerate-(1)}
\item \label{prop:xmp-1}
If \( \kappa  = \lambda^+ \) is a successor cardinal, then \( S(\kappa,T) = 2 \) and \( \cong^\kappa_T \) is a true \( \boldsymbol{\Delta}^0_3 \) set. In particular, \( B(\kappa,T) = 3 \).
\item \label{prop:xmp-2}
If \( \kappa \) is a limit cardinal, then \( S(\kappa,T) = 1 \) while \( \cong^\kappa_T \) is a true \( \boldsymbol{\Pi}^0_2\) set. In particular, \( B(\kappa,T) = 2 \).
\end{enumerate-(1)}
\end{prop}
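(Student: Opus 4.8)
The plan is to reduce everything to Ehrenfeucht--Fra\"{\i}ss\'e games via Theorem~\ref{qr} and then read off the Borel rank by a direct computation, since the coarse bound of Theorem~\ref{orig3}\ref{orig3-2} only gives $\cong^\kappa_T\in\boldsymbol{\Pi}^0_4$ in the successor case. I parametrize a model $\M\in\Mod^\kappa_T$ by the pair $(|P^\M|,|\kappa\setminus P^\M|)$ of infinite cardinals whose maximum is $\kappa$, so that $\M\cong\N$ iff the two pairs coincide. First I would analyze $\EF^\kappa_1(\M,\N)$: as $\Tr_1$ grants $\1$ a single move, $\2\wins\EF^\kappa_1(\M,\N)$ iff for every $C\subseteq\kappa$ with $|C|<\kappa$ there is a partial isomorphism $f$ with $C\subseteq\dom f\cap\ran f$, and a cardinality count shows this holds exactly when, separately on the $P$-part and on the $\neg P$-part, the two relevant cardinals are equal \emph{or} form the pair $\{\lambda,\kappa\}$ (the latter only when $\kappa=\lambda^+$, since $|C|<\kappa$ then forces $|C|\le\lambda$). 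This is where the dichotomy enters: when $\kappa$ is a limit (hence inaccessible) cardinal the loophole disappears, $\EF^\kappa_1$ captures isomorphism, and as $T$ is not $\kappa$-categorical we get $S(\kappa,T)=1$. When $\kappa=\lambda^+$ the only non-isomorphic pairs surviving $\EF^\kappa_1$ are, up to symmetry, $(\lambda,\kappa)$ vs $(\kappa,\kappa)$, $(\kappa,\lambda)$ vs $(\kappa,\kappa)$, and $(\lambda,\kappa)$ vs $(\kappa,\lambda)$; in particular $S(\kappa,T)\ge 2$.

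For the successor case I would then show $\2\not\wins\EF^\kappa_2(\M,\N)$ for each of these three pairs, which forces every model to have Scott height $\le 2$ and hence $S(\kappa,T)=2$. The idea is an ``exhaustion trap'' on the length-$2$ branch of $\Tr_2$: if, say, $|P^\M|=\lambda<\kappa=|P^\N|$, player~$\1$ first plays a set $C_0\subseteq P^\N\setminus P^\M$ of size $\lambda$ that must lie in the range, forcing $\2$'s partial isomorphism to use up all of $P^\M$ in its domain; $\1$ then plays $C_1\supseteq C_0$ containing a fresh element of $P^\N$ that can no longer be hit. The subtlety to record is that although the rules only require the \emph{final} function to be a partial isomorphism, nestedness forces every move of a winning $\2$ to already be a partial isomorphism, so the trap genuinely applies; the other two pairs are handled symmetrically.

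For the upper bounds on the Borel rank I would compute $\cong^\kappa_T$ by hand. Writing $\{|P|=\kappa\}$ for the set of $\M$ with $P^\M$ unbounded (a $\boldsymbol{\Pi}^0_2$ set, by regularity of $\kappa$) and $\{|P|<\kappa\}$ for the set with $P^\M$ bounded (a $\boldsymbol{\Sigma}^0_2$ set), and noting that ``both small and of equal size'' is a $\kappa$-union of $\boldsymbol{\Sigma}^0_2$ sets, the relation $\{|P^\M|=|P^\N|\}$ is the union of a $\boldsymbol{\Pi}^0_2$ and a $\boldsymbol{\Sigma}^0_2$ set, hence $\boldsymbol{\Delta}^0_3$; intersecting the $P$- and $\neg P$-versions gives $\cong^\kappa_T\in\boldsymbol{\Delta}^0_3$ in the successor case, while in the limit case the small values collapse and the same bookkeeping yields $\cong^\kappa_T\in\boldsymbol{\Pi}^0_2$ (consistent with Theorem~\ref{orig3}\ref{orig3-2} for $S(\kappa,T)=1$).

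The hard part is the matching lower bounds, and this is where I expect the main obstacle. By Fact~\ref{fct:fromertoclass} each isomorphism class is a continuous preimage of $\cong^\kappa_T$, so it suffices to exhibit classes of the claimed exact complexity. To see $\cong^\kappa_T\notin\boldsymbol{\Sigma}^0_2$ (in both cases) I would show the $(\kappa,\kappa)$-class is not $\boldsymbol{\Sigma}^0_2$ through the generalized Baire category theorem, available since $\kappa^{<\kappa}=\kappa$: this class is comeager in the open set $\Mod^\kappa_T$, yet a $\boldsymbol{\Sigma}^0_2$ presentation would exhibit it as a $\kappa$-union of closed nowhere dense sets (every basic open set meets the meager complement ``some side is bounded''), making it meager, a contradiction. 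To see $\cong^\kappa_T\notin\boldsymbol{\Pi}^0_2$ in the successor case I would build a continuous reduction $g\colon\pre{\kappa}{2}\to\Mod^\kappa_T$ sending $y$ to a model with $|\neg P|=\kappa$ fixed and with $P$-side of size $\lambda$ or $\kappa$ according to whether $\{\alpha:y(\alpha)=1\}$ is bounded, so that $g^{-1}$ of the $(\lambda,\kappa)$-class is exactly the set of bounded $y$'s; the latter is a true $\boldsymbol{\Sigma}^0_2$ set (its complement, unboundedness, being properly $\boldsymbol{\Pi}^0_2$ again by Baire category), hence not $\boldsymbol{\Pi}^0_2$, and therefore neither is the class. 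Combining the bounds, $\cong^\kappa_T$ is a true $\boldsymbol{\Pi}^0_2$ set in the limit case and a true $\boldsymbol{\Delta}^0_3$ set in the successor case, whence $B(\kappa,T)=2$ and $B(\kappa,T)=3$ respectively.
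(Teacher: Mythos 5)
Your overall strategy coincides with the paper's: compute \( S(\kappa,T) \) by analysing \( \EF^\kappa_1 \) and \( \EF^\kappa_2 \) directly, obtain the upper bounds on the Borel rank by writing out \( \cong^\kappa_T \) explicitly in terms of the cardinalities of \( P \) and its complement, and obtain the lower bounds by exhibiting isomorphism classes that are properly \( \boldsymbol{\Pi}^0_2 \), respectively properly \( \boldsymbol{\Sigma}^0_2 \), and invoking Fact~\ref{fct:fromertoclass}. Your Baire-category route to the lower bounds is a legitimate variant of the paper's continuous reductions to the bounded/unbounded subsets of \( \pre{\kappa}{2} \) (indeed, Baire category is how one certifies those sets as true \( \boldsymbol{\Sigma}^0_2 \)/\( \boldsymbol{\Pi}^0_2 \) in the first place), and your map \( g \) for the \( \boldsymbol{\Pi}^0_2 \) lower bound is essentially the reduction used in the paper.

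There is, however, one step that fails as written: the ``exhaustion trap'' showing \( \2 \not\wins \EF^\kappa_2(\M,\N) \) when \( |P^\M| = \lambda < \kappa = |P^\N| \). You have \( \1 \) open with a set \( C_0 \subseteq P^\N \setminus P^\M \) of size \( \lambda \) and claim this forces \( \2 \) to ``use up all of \( P^\M \)'' in the domain. It does not: the requirement \( C_0 \subseteq \ran(f_0) \) only forces \( f_0^{-1}[C_0] \) to be \emph{some} \( \lambda \)-sized subset of \( P^\M \), and a set of size \( \lambda \) has proper subsets of size \( \lambda \). A careful \( \2 \) will arrange \( |P^\M \setminus \dom(f_0)| = \lambda \), after which any second move of \( \1 \), being of size \( \leq \lambda \), can be accommodated, so \( \2 \) defeats your strategy. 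The repair is immediate and is exactly the paper's move: \( \1 \) should play \( C_0 = P^\M \) itself (legal, since \( |P^\M| = \lambda < \kappa \)), which genuinely forces \( P^\M \subseteq \dom(f_0) \) and hence \( \ran(f_0) \cap P^\N = f_0[P^\M] \), a set of size \( \lambda < |P^\N| \); in the second round \( \1 \) adds a point of \( P^\N \setminus \ran(f_0) \) to the constraint set, and \( \2 \) has no preimage left in \( P^\M \). With this correction the remaining steps --- the \( \EF^\kappa_1 \) analysis (including your reduction to the three surviving non-isomorphic pairs), the \( \boldsymbol{\Delta}^0_3 \) and \( \boldsymbol{\Pi}^0_2 \) upper bounds, and the lower bounds --- go through as you describe.
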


\begin{proof}
We first consider the case \( \kappa = \lambda^+ \), and begin with the computation of \( S(\kappa,T) \).
\begin{claim} \label{successorS>1}
Let \( \M,\N \in \Mod^\kappa_T \) be such that \( |P^\M| = \lambda\) while \( |\kappa \setminus P^\M| = | P^\N | = | \kappa \setminus P^\N| = \kappa = \lambda^+ \). Then \( \2 \wins  \EF_1^\kappa(\M,\N) \).
\end{claim}

\begin{proof}[Proof of the claim]
Any run of \( \EF_1^\kappa(\M,\N) \) consists of just one round where \( \1 \) 
provides a set \( C \subseteq \kappa \) of size \( \leq \lambda \) and \( \2 \) has to respond with a partial isomorphism \( f \) 
between \( \M \) and \( N \) of size \( < \kappa \) and  such that \mbox{\( C \subseteq \dom(f) \cap \ran(f) \). }
But clearly this is  always possible: just let \( \dom(f) = \ran(f) = D \supseteq C \) be any subset of 
\( \kappa \) such that all of \( D \cap P^\M \), \( D \setminus P^\M \), \( D \cap P^\N \), and \( D \setminus P^\N \) have 
size \(\lambda\), and then define \( f \) in the obvious way. 
\end{proof}

\begin{claim} \label{successorS=2} 
If \( \M,\N \in \Mod^\kappa_T \) are not isomorphic, then 
\( \1 \wins \EF_2^\kappa(\M,\N) \) (whence \( \2 \not\wins \EF_2^\kappa(\M,\N) \)). 
\end{claim}

\begin{proof}[Proof of the claim]
Since 
\( \M \not \cong \N \)
we have that either \( |P^\N| \neq |P^\M| \)  or \( |\kappa \setminus P^\M| \neq |\kappa \setminus P^\N | \).
Without loss of generality,  
we may assume 
\( | P^\M | < |P^\N| \leq \kappa \) (the other cases are similar). 
Let \( \1 \) play \( P^\M \) in the first round of 
the game,
and let \( f \) 
be the move of \( \2 \) in this first round, which may be assumed to be a partial isomorphism (otherwise \( \1 \) already won). 
Notice that it cannot happen that \( \ran(f) \supseteq P^\N \) because of the cardinality assumption on \( P^\M \) and \( P^\N \). Thus on the second round
\( \1 \) can play any \( D \supseteq P^\M \)
of size 
\( < \kappa \) containing at least one point in \( P^\N \setminus \ran(f) \),
and \( \2 \) will not be able to extend 
\( f \) to a partial isomorphism with range extending \( D \) because there are no more points in 
\( (\kappa \setminus \dom(f)) \cap P^\M  \). 
\end{proof}

On the one hand \( S(\kappa,T) > 1 \) because  the structures \( \M \) and \( \N \) considered in Claim~\ref{successorS>1} are not isomorphic. On the other hand, Claim~\ref{successorS=2} yields \( S(\kappa,T) \leq 2 \) by contrapositive. Thus \( S(\kappa,T) = 2 \).

\smallskip

Now we compute the topological complexity of \( \cong^\kappa_T \). Let \( \hat{\M} \in \Mod^\kappa_T\) be such that \( |P^{\hat{\M}}| = | \kappa \setminus P^{\hat{\M}}| = \kappa \).

\begin{claim} \label{claimPi02class}
 \( [\hat{\M}]_{\cong} \) is a true \( \boldsymbol{\Pi}^0_2(\Mod^\kappa_T) \) set. 
 \end{claim} 

\begin{proof}[Proof of the claim] 
 Indeed, 
\[ 
[\hat\M]_{\cong} = \bigcap_{\alpha < \kappa} \bigcup_{\alpha \leq \beta,\beta' < \kappa} \{ \N \in \Mod^\kappa_T \mid \beta \in P^\N \wedge \beta' \notin P^\N \},
 \] 
whence \( [\hat\M]_{\cong} \in \boldsymbol{\Pi}^0_2(\Mod^\kappa_T) \). On the other hand, the function \( f \)
sending \( x \in \pre{\kappa}{2} \) to the structure \( \N \in \Mod^\kappa_T \) such that
\[ 
P^\N = \lambda \cup \{ \lambda+2\alpha \mid x(\alpha) = 1 \}
 \] 
is continuous and such that \( f^{-1}([\hat\M]_{\cong}) = P \) where
\[ 
P = \{ x \in \pre{\kappa}{2} \mid \forall \alpha < \kappa \, \exists \alpha \leq \beta < \kappa \, (x(\beta) = 1)	 \}.
 \] 
Since the latter is a well-known true \( \boldsymbol{\Pi}^0_2 (\pre{\kappa}{2}) \) set,  \( [\hat\M]_{\cong} \) cannot be \( \boldsymbol{\Sigma}^0_2(\Mod^\kappa_T) \) and we are done. 
\end{proof}

\begin{claim} \label{claimsuccessorotherclasses}
If \( \N \in \Mod^\kappa_T \setminus [\hat{\M}]_{\cong} \), then  \( [\N]_{\cong} \in \boldsymbol{\Sigma}^0_2 (\Mod^\kappa_T) \). 
\end{claim}

\begin{proof}[Proof of the claim]
Let us consider the case where \( \N \) is such that \( |P^\N| = \lambda \) (the other cases are similar). We have
\begin{align*}
[\N]_{\cong} = \bigcup \Big\{ A_\Q \cap \Mod^\kappa_T\mid {} & \Q \text{ is a \(\lambda\)-sized structure} \\
&\text{with domain } \subseteq \kappa \text{ and } |P^\Q| = \lambda \Big\},
 \end{align*}
where
\[ 
A_\Q = \Nbhd_\Q \cap  \{ \R \in \Mod^\kappa_\L \mid P^\R = P^\Q \} . 
 \] 
Since \( \{ \R \in \Mod^\kappa_\L \mid P^\R = P^\Q \} \) is closed in \( \Mod^\kappa_\L \), then so is \( A_\Q \), whence \( [\N]_\Q \) is a union of \( \kappa \)-many closed sets by \( \kappa^{< \kappa} = \kappa \). 
\end{proof}

\begin{claim} \label{claimSigma02class}
Let \( \hat{\N} \in \Mod^\kappa_T \) be such that \( |P^{\hat{\N}}| = \lambda \). Then \( [\hat{\N}]_{\cong} \) is a true \( \boldsymbol{\Sigma}^0_2(\Mod^\kappa_T) \) set.
\end{claim}

\begin{proof}[Proof of the claim]
By Claim~\ref{claimsuccessorotherclasses} it is enough to show that \( [\hat{\N}]_{\cong} \notin \boldsymbol{\Pi}^0_2(\Mod^\kappa_T) \). But if \( f \) and \( P \) are as in the proof of Claim~\ref{claimPi02class}, then \( f^{-1}([\hat{\N}]_{\cong}) = \pre{\kappa}{2} \setminus P \): since \( \pre{\kappa}{2} \setminus P \) is a true \( \boldsymbol{\Sigma}^0_2(\pre{\kappa}{2}) \) set, we are done.
\end{proof}

Since there are at most \( \kappa \)-many isomorphism types for \( \kappa \)-sized models of \( T \), by Claim~\ref{claimPi02class} and Claim~\ref{claimsuccessorotherclasses} we get that both
\[ 
{\cong^\kappa_T} = \bigcup \{ [\M]_{\cong} \times [\M]_{\cong} \mid \M \in \Mod^\kappa_T \}
 \] 
and
\[ 
(\Mod^\kappa_T)^2 \setminus {\cong^\kappa_T} = \bigcup \{ [\M]_{\cong} \times [\N]_{\cong} \mid \M, \N \in \Mod^\kappa_T \text{ and } \M \not\cong \N \}
 \] 
belong to \( \boldsymbol{\Sigma}^0_3 \), whence \( {\cong^\kappa_T} \in \boldsymbol{\Delta}^0_3 \). Finally, \( {\cong^\kappa_T} \notin \boldsymbol{\Sigma}^0_2 \) and \( {\cong^\kappa_T} \notin \boldsymbol{\Pi}^0_2 \) by Claim~\ref{claimPi02class} and Claim~\ref{claimSigma02class}, respectively, together with Fact~\ref{fct:fromertoclass}.

\medskip

We now consider a limit cardinal \( \kappa \), and again compute first \( S(\kappa,T) \). 

\begin{claim}
If \( \M,\N \in \Mod^\kappa_T \) are not isomorphic, then 
\( \1 \wins \EF_1^\kappa(\M,\N) \) (whence \( \2 \not\wins \EF_1^\kappa(\M,\N) \)).
\end{claim}

\begin{proof}[Proof of the claim]
Since \( \M \not\cong \N \) we have that either \( |P^\M| \neq |P^\N| \) or \linebreak\mbox{\( | \kappa \setminus P^\M| \neq |\kappa \setminus P^\N| \).} Without loss of generality, we may assume \( |P^\M| < |P^\N| \) (the other cases are similar). Since \( \kappa \) is limit, there is a cardinal \( \lambda < \kappa \) such that \( |P^\M| < \lambda \leq |P^\N| \). So \( \1 \) can play any subset \( C \) of \( P^\N \) of size \(\lambda\) as her first (and unique) move, and by choice of \(\lambda\) player \( \2 \) will not be able to produce a partial isomorphism between \( \M \) and \( \N \) with range containing \( C \).
\end{proof}

By contrapositive, \( S(\kappa,T) \leq 1 \). On the other hand,  \( S(\kappa,T) > 0 \) because \( T \) is not uncountably categorical. Thus \( S(\kappa,T) = 1 \).

\smallskip

To compute the topological complexity of \( \cong^\kappa_T \), notice that for every \mbox{\( \M ,\N \in \Mod^\kappa_T \)} one has \( \M \cong \N \) if and only if
\begin{quotation}
for all cardinals \( \lambda < \kappa \), there are at least \(\lambda\)-many elements in \( P^\M \) if and only if  there are at least \(\lambda\)-many elements in \( P^\N \), and the same when replacing \( P^\M \) and \( P^\N \) with \( \kappa \setminus P^\M \) and \( \kappa \setminus P^\N \), respectively.
\end{quotation} 
(Here it is crucial that \( \kappa \) is  a limit cardinal to ensure that if for all \(\lambda < \kappa \) there are at least \(\lambda\)-many elements in \( P^\M \), then \( |P^\M| = \kappa \), and similarly for  \( P^\N \), \( \kappa \setminus P^\M \), and \( \kappa \setminus P^\N \).)  The above condition easily yields that \( {\cong^\kappa_T} \in \boldsymbol{\Pi}^0_2 \). To see that \( {\cong^\kappa_T} \) does not belong to any lower class, just observe that Claim~\ref{claimPi02class} holds for limit \( \kappa \)'s as well and use again Fact~\ref{fct:fromertoclass}. 
\end{proof}

Notice that in part~\ref{prop:xmp-2} the relation \( \cong^\kappa_T \) has the maximal complexity allowed by Theorem~\ref{link2}. We also remark that Proposition~\ref{prop:xmp}\ref{prop:xmp-2} does not contradict Proposition~\ref{prop:GCH} because it deals with regular limit (i.e.\ weakly inaccessible) cardinals, which in models of \( \mathsf{GCH} \) are inaccessible and thus limit points of the \( \aleph \)-function --- by Remark~\ref{rmk:She} in such a situation the upper bound on the number of models given by Theorem~\ref{thm:She} is trivial and the proof of Proposition~\ref{prop:GCH} does not go through.

\subsection{Borel reducibility} \label{subsec:Borelreducibility}

Borelness is a very strong dividing line among the possible complexities of isomorphism relations of the form \( \cong^\kappa_T \): knowing that \( \cong^\kappa_T \) is \( \kappa^+ \)-Borel means that there is a procedure involving only \( \kappa \)-ary Boolean operations and with a fixed length \( \alpha < \kappa^+ \) which allows us to classify the \( \kappa \)-sized model of \( T \) up to isomorphism, while if \( \cong^\kappa_T \) is not \( \kappa^+ \)-Borel then there is no such algorithm. A finer complexity analysis is provided by \( \kappa^+ \)-Borel reducibility.

\begin{defin}
Let \( X,Y \) be topological spaces homeomorphic to a \( \kappa^+ \)-Borel subset of \( \pre{\kappa}{2} \), and let \( E,F \) be binary relations on \( X,Y \), respectively. A function \( f \colon X \to Y \) is called a \emph{reduction} of \( E \) to \( F \) is for all \( x,x' \in X \)\
\[ 
x \mathrel{E} x' \IFF f(x) \mathrel{F} f(x').
 \] 
We say that \( E \) is \emph{\( \kappa^+ \)-Borel reducible} to \( F \), in symbols \( E \leq^\kappa_B F \), if there is a \mbox{\( \kappa^+ \)-Borel} measurable function which is a reduction of \( E \) to \( F \). We also set \( E <^\kappa_B F \) if \( E \leq^\kappa_B F \) but \( F \not\leq^\kappa_B E \).
\end{defin}

A possible interpretation of the statement ``\( E <^\kappa_B F \)'' is that \( F \) is strictly more complicated than \( E \).
Obviously, if \( E \leq^\kappa_B F \) and \( F \) is \( \kappa^+ \)-Borel (respectively, \( E \) is not \( \kappa^+ \)-Borel), then \( E \) is \( \kappa^+ \)-Borel (respectively, \( F \) is not \( \kappa^+ \)-Borel). However, it is not true that  all \( \kappa^+ \)-Borel equivalence relations are \( \kappa^+ \)-Borel reducible to each other, and the fact that \( E \) is \( \kappa^+ \)-Borel and \( F \) is not does not in general  imply that \( E \leq^\kappa_B F \). Thus \( \kappa^+ \)-Borel reducibility provides a complexity analysis which is  strictly finer than the distinction \( \kappa^+ \)-Borel versus non-\( \kappa^+ \)-Borel.

The following easy observation shows that, for suitable cardinals \( \kappa \), there is also a gap in the sense of \( \kappa^+ \)-Borel reducibility between classifiable shallow theories and those  which are not. 
(Notice that it makes sense to consider \( \kappa^+ \)-Borel reducibility among isomorphism relations of the form \( \cong^\kappa_T \) because \( \Mod^\kappa_T \) is a \( \kappa^+ \)-Borel subset of \( \Mod^\kappa_\L \) by Theorem~\ref{lopezext}, and  the latter is isomorphic to \( \pre{\kappa}{2} \).)

\begin{prop} \label{prop:reducibility} 
Let \( \kappa = \aleph_\gamma \) be such that \( \kappa^{< \kappa} = \kappa \) and \( \beth_{\omega_1} \left( |\gamma| \right) \leq \kappa \). Let \( T ,T' \) be arbitrary countable complete first-order theory, and assume that  \( T \) is classifiable shallow, while \( T' \) is not. Then 
\[ 
{\cong^\kappa_T} <^\kappa_B {\cong^\kappa_{T'}}.
 \] 
Moreover, the \( \leq^\kappa_B \)-interval between the two isomorphism relations is nonempty, that is, there is a (\( \kappa^+ \)-Borel) equivalence relation \( E \) such that
\[ 
{\cong^\kappa_T} <^\kappa_B E <^\kappa_B {\cong^\kappa_{T'}}.
 \] 
\end{prop}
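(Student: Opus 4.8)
The plan is to exhibit the intermediate relation explicitly, taking \( E_0 \) to be the equality relation on \( \pre{\kappa}{2} \); its classes are singletons, so \( E_0 \) is \( \kappa^+ \)-Borel (in fact closed) and has \( 2^\kappa \) classes. Everything rests on two cardinality facts together with the Borel/non-Borel dichotomy. Since \( T \) is classifiable shallow and \( \beth_{\omega_1}(|\gamma|) \leq \kappa \), Remark~\ref{rmk:She} gives \( \mu := I(\kappa,T) < \kappa \), so \( \cong^\kappa_T \) has \( \mu < \kappa \) classes and is \( \kappa^+ \)-Borel by Theorem~\ref{thm:DMGT}; since \( T' \) is not classifiable shallow, \( I(\kappa,T') = 2^\kappa \) by Theorem~\ref{thm:She} and \( \cong^\kappa_{T'} \) is not \( \kappa^+ \)-Borel by Theorem~\ref{thm:DMGT}. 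Thus \( E_0 \) should sit strictly between the two, and the original inequality \( {\cong^\kappa_T} <^\kappa_B {\cong^\kappa_{T'}} \) will follow at the same time.

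First I would record the two routine techniques that do all the easy work. (i) Whenever a \( \kappa^+ \)-Borel equivalence relation has only \( \mu<\kappa \) classes, \emph{each of which is \( \kappa^+ \)-Borel} (this holds for \( \cong^\kappa_T \), since every class is a continuous preimage of \( \cong^\kappa_T \) by Fact~\ref{fct:fromertoclass}, hence \( \kappa^+ \)-Borel), one can reduce it to \emph{any} equivalence relation with at least \( \mu \) classes by sending the \( i \)-th class to a fixed representative of the \( i \)-th target class; this map is \( \kappa^+ \)-Borel because it takes \( <\kappa \) values with \( \kappa^+ \)-Borel fibers. Applied with targets \( \cong^\kappa_{T'} \) and \( E_0 \) (both of which have \( \geq \mu \) classes) this yields \( {\cong^\kappa_T} \leq^\kappa_B {\cong^\kappa_{T'}} \) and \( {\cong^\kappa_T} \leq^\kappa_B E_0 \). (ii) A reduction pulls \( \kappa^+ \)-Borelness back, so nothing non-\( \kappa^+ \)-Borel reduces to a \( \kappa^+ \)-Borel relation; hence \( {\cong^\kappa_{T'}} \not\leq^\kappa_B {\cong^\kappa_T} \) and \( {\cong^\kappa_{T'}} \not\leq^\kappa_B E_0 \). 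Finally, \( E_0 \not\leq^\kappa_B {\cong^\kappa_T} \) is a pure cardinality count: a reduction would inject \( \pre{\kappa}{2}/E_0 \) (of size \( 2^\kappa \)) into \( \Mod^\kappa_T/{\cong} \) (of size \( \mu<\kappa \)), which is absurd. Combining these, \( {\cong^\kappa_T} <^\kappa_B {\cong^\kappa_{T'}} \) (the first assertion), \( {\cong^\kappa_T} <^\kappa_B E_0 \), and \( {\cong^\kappa_{T'}} \not\leq^\kappa_B E_0 \) are all in hand, so the only missing ingredient for \( {\cong^\kappa_T} <^\kappa_B E_0 <^\kappa_B {\cong^\kappa_{T'}} \) is the reducibility \( E_0 \leq^\kappa_B {\cong^\kappa_{T'}} \).

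The main obstacle is precisely this last reduction \( E_0 \leq^\kappa_B {\cong^\kappa_{T'}} \): it is the one place where the fine structure of \( T' \) (beyond its sheer number of models) enters, as I must produce a \( \kappa^+ \)-Borel map \( g \colon \pre{\kappa}{2} \to \Mod^\kappa_{T'} \) with \( x = y \iff g(x) \cong g(y) \), i.e.\ a \( \kappa^+ \)-Borel family of \( 2^\kappa \) pairwise non-isomorphic \( \kappa \)-sized models of \( T' \). Since \( T' \) is not classifiable shallow, such a family is exactly what the (Shelah/Friedman--Hyttinen--Kulikov) constructions underlying the non-Borelness half of Theorem~\ref{thm:DMGT} provide: in the non-classifiable case one codes an arbitrary \( x \in \pre{\kappa}{2} \) into a model realizing a prescribed pattern (the same constructions that witness \( I(\kappa,T') = 2^\kappa \)), and in the classifiable deep case one encodes \( x \) into the ill-founded tree of submodels. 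I expect the genuinely technical point to be checking that this coding is simultaneously \( \kappa^+ \)-Borel and a faithful reduction of \( E_0 \).

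For the classifiable deep case one can avoid invoking \( E_0 \leq^\kappa_B {\cong^\kappa_{T'}} \) altogether by a self-contained restriction argument, which I would include as an alternative. By Corollary~\ref{scottdepth} every model of a classifiable theory has ordinal \( \L_{\infty\kappa} \)-Scott height \( <\kappa^+ \), while \( I(\kappa,T') = 2^\kappa > \kappa \); a cardinality computation then produces a single \( \beta<\kappa^+ \) admitting \( \kappa \)-many pairwise non-isomorphic \( \N_i \models T' \) of Scott height \( \leq\beta \). Setting \( \ElCl = \bigcup_{i<\kappa}[\N_i]_{\cong} \) gives a \( \kappa^+ \)-Borel invariant set (each \( [\N_i]_{\cong} \) is \( \kappa^+ \)-Borel by Theorem~\ref{orig2}, and \( \kappa \)-unions of \( \kappa^+ \)-Borel sets are \( \kappa^+ \)-Borel) carrying exactly \( \kappa \) classes, on which \( \cong \) is \( \kappa^+ \)-Borel by Theorem~\ref{orig3}\ref{orig3-2} (bounded Scott height). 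Then \( E := {\cong^\kappa_{\ElCl}} \) reduces to \( {\cong^\kappa_{T'}} \) through the inclusion, is not reducible to it in the other direction (it is \( \kappa^+ \)-Borel while \( {\cong^\kappa_{T'}} \) is not), has \( \geq\kappa>\mu \) classes so \( E \not\leq^\kappa_B {\cong^\kappa_T} \), and receives \( {\cong^\kappa_T} \) by technique (i); hence \( E \) lies strictly between as required.
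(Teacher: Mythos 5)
There is a genuine gap, and it sits exactly where you flagged it: the reduction \( E_0 \leq^\kappa_B {\cong^\kappa_{T'}} \) for \( E_0 \) the equality relation on \( \pre{\kappa}{2} \). This amounts to producing a \( \kappa^+ \)-Borel family of \( 2^\kappa \) pairwise non-isomorphic \( \kappa \)-sized models of \( T' \), uniformly parametrized by \( \pre{\kappa}{2} \). Nothing in the paper provides this: Theorem~\ref{thm:FHK}/\ref{thm:DMGT} only says \( \cong^\kappa_{T'} \) is not \( \kappa^+ \)-Borel, and the Friedman--Hyttinen--Kulikov reducibility results that do build such codings require additional hypotheses (e.g.\ \( \mathsf{V=L} \), \( \kappa \) a successor of a suitable \( \lambda \); cf.\ Theorem~\ref{thm:HKM17}). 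Since the whole point of Proposition~\ref{prop:reducibility} is to be a \( \mathsf{ZFC} \) statement under the stated cardinal arithmetic, deferring this step as ``the genuinely technical point to be checked'' leaves the main line of your argument incomplete. Your fallback restriction argument is essentially sound for the \emph{classifiable deep} case (modulo a small repair: \( [\N_i]_{\cong} \cap \Mod^\kappa_{T'} \) should be seen to be \( \kappa^+ \)-Borel via the EF-game coding of \( \equiv_\beta \) from Theorem~\ref{link2}, not by quoting Theorem~\ref{orig2}, which presupposes Borelness), but it explicitly does not cover non-classifiable \( T' \), where Corollary~\ref{scottdepth} gives models of Scott height \( \infty \) and your pigeonhole argument on Scott heights is unavailable.

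The paper avoids the problem entirely by not taking \( E_0 \): it lets \( E \) be \emph{any} \( \kappa^+ \)-Borel equivalence relation with exactly \( \kappa \)-many classes. Then both reductions involving \( E \) are of the ``constant on classes'' type you call technique (i): \( {\cong^\kappa_T} \leq^\kappa_B E \) because \( \cong^\kappa_T \) has \( \lambda < \kappa \) classes, and \( E \leq^\kappa_B {\cong^\kappa_{T'}} \) because \( E \) has only \( \kappa \)-many classes, each \( \kappa^+ \)-Borel, and \( I(\kappa,T') = 2^\kappa \) supplies \( \kappa \)-many pairwise non-isomorphic targets; the map is \( \kappa^+ \)-Borel since preimages are unions of at most \( \kappa \)-many \( \kappa^+ \)-Borel classes. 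All strictness then follows from counting classes (\( \lambda < \kappa < 2^\kappa \)) together with the Borel/non-Borel dichotomy, i.e.\ from exactly the routine facts you already have. Your choice of \( E_0 \), with its \( 2^\kappa \) classes, is what forces you into the hard, unproved coding; replacing it by a \( \kappa \)-class relation turns the entire proof into the cardinality bookkeeping you correctly set up in your first two paragraphs.
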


\begin{proof}
Let \(\lambda\) be the number of \( \cong^\kappa_T \)-equivalence classes: by Theorem~\ref{thm:She} and Remark~\ref{rmk:She} we have \( \lambda < \kappa \). Let \( E \) be any \( \kappa^+ \)-Borel equivalence relation on a topological space \( X \) (homeomorphic to a \( \kappa^+ \)-Borel subset of \( \pre{\kappa}{2} \)) having exactly \( \kappa \)-many classes. Pick representatives \( \M_i \), \( i < \lambda \), in each \( \cong^\kappa_T \)-equivalence class, and pairwise \( E \)-inequivalent elements \( (x_i)_{i < \lambda} \). Then the map \( f \colon \Mod^\kappa_T \to X \) sending \( \N \in\Mod^\kappa_T \) to the unique \( x_i \) for which \( \N \cong \M_i \) is trivially a reduction of \( \cong^\kappa_T \) to \( E \), and it is \( \kappa^+ \)-Borel because the \( f \)-preimage of any subset of \( X \) is a union of \( \leq \lambda \)-many \( {\cong^\kappa_T} \)-equivalence classes, which are all \( \kappa^+ \)-Borel by Fact~\ref{fct:fromertoclass} and the fact that \( \cong^\kappa_T \) is \( \kappa^+ \)-Borel by Theorem~\ref{thm:FHK}. Moreover \( E \not\leq^\kappa_B {\cong^\kappa_T} \) because of the number of equivalence classes, hence \( {\cong^\kappa_T} <^\kappa_B E \).

To prove \( E <^\kappa_B {\cong^\kappa_{T'}} \) we use the same idea. Indeed, \( {\cong^\kappa_{T'}} \not\leq^\kappa_B E \) because by Theorem~\ref{thm:She} there are \( 2^\kappa \)-many \( \cong^\kappa_{T'} \)-equivalence classes, while there are only \( \kappa \)-many \( E \)-equivalence classes. To produce a \( \kappa^+ \)-Borel reduction of \( E \) to \( \cong^\kappa_T \) pick instead representatives \( x_i \), \( i < \kappa \), in each \( E \)-equivalence class, and pairwise non-isomorphic models \( (\N_i)_{i < \kappa} \) in \( \Mod^\kappa_{T'} \): the map \( g \colon X \to \Mod^\kappa_{T'} \) sending each \( y \in X \) to the unique \( \N_i \) such that \( y \mathrel{E} x_i \) is clearly a reduction, and it is \( \kappa^+ \)-Borel because \( E \) is.
\end{proof}

Notice that the conditions on \( \kappa \) in Proposition~\ref{prop:reducibility} are the same of Proposition~\ref{prop:GCH}, thus Remark~\ref{rmk:GCH} applies here as well. Moreover, the proof above actually shows that under the same assumptions on \( \kappa \), if \( T, T' \) are both classifiable shallow theories then \( {\cong^\kappa_T} \leq^\kappa_B {\cong^\kappa_{T'}} \) if and only if \( I(\kappa,T) \leq I(\kappa,T') \), thus the isomorphism relations over \( \kappa \)-sized models of classifiable shallow theories are prewellordered under \mbox{\( \kappa^+ \)-Borel} reducibility. Other variations along the same lines are of course possible; we leave them to the interested reader.

In the same vein, Hyttinen, Kulikov, and Moreno considered in~\cite{HKM17} another dividing line among countable complete first order theories, and proved the following descriptive set-theoretic gap.

\begin{theorem}[{\cite[Theorem 6]{HKM17}}] \label{thm:HKM17}
Assume \( \mathsf{V = L} \). Let \( \kappa = \kappa^{< \kappa} = \lambda^+ \) with \( 2^\lambda > 2^{\aleph_0} \) and \( \lambda^{< \lambda} = \lambda \). Let \( T \), \( T' \) be arbitrary countable complete first-order theories, and assume that  \( T \) is classifiable, while \( T' \) is not. Then 
\[ 
{\cong^\kappa_T} <^\kappa_B {\cong^\kappa_{T'}}.
 \] 
The same conclusion can be forced to hold over any model of \( \mathsf{ZFC} \) through a \( \kappa \)-closed \( \kappa^+ \)-cc forcing notion.
\end{theorem}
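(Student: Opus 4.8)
The plan is to prove the two halves of the strict inequality separately: the reduction $\cong^\kappa_T \leq^\kappa_B \cong^\kappa_{T'}$, and the failure of the converse $\cong^\kappa_{T'} \not\leq^\kappa_B \cong^\kappa_T$. The natural strategy is to interpose a canonical combinatorial equivalence relation between the two isomorphism relations and then separate, using $\mathsf{V=L}$ to drive both the codings and the separation. For a stationary $S \subseteq \kappa$, write $E_S$ for the relation on $\pre{\kappa}{2}$ given by $\eta \mathrel{E_S} \xi$ iff $\{ \alpha \in S \mid \eta(\alpha) \neq \xi(\alpha) \}$ is nonstationary; the relevant instance will be $S^\kappa_\lambda = \{ \alpha < \kappa \mid \cf(\alpha) = \lambda \}$, and I abbreviate $E^\kappa_\lambda = E_{S^\kappa_\lambda}$.

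First I would establish the upper bound $\cong^\kappa_T \leq^\kappa_B E^\kappa_\lambda$ for the classifiable $T$. The input is Shelah's structure theory: every $\kappa$-sized model of a classifiable (superstable, NDOP, NOTOP) theory is determined up to isomorphism by the tree of its $\leq 2^{\aleph_0}$-sized submodels together with their isomorphism types (the skeleton described in the Introduction). Reading this skeleton off a code $\M \in \Mod^\kappa_T$ can be arranged in a $\kappa^+$-Borel way, and since the relevant combinatorics concentrate on the cofinality-$\lambda$ points of $\kappa = \lambda^+$, the resulting invariant can be coded into $\pre{\kappa}{2}$ so that isomorphism of models corresponds to $E^\kappa_\lambda$-equivalence of codes. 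When $T$ is classifiable shallow this is subsumed by Theorem~\ref{thm:DMGT} (which gives $\cong^\kappa_T$ $\kappa^+$-Borel, hence trivially $\leq^\kappa_B E^\kappa_\lambda$); the substantive case is classifiable deep, where $\cong^\kappa_T$ is no longer $\kappa^+$-Borel yet is still reducible to $E^\kappa_\lambda$.

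Next I would establish the lower bound $E^\kappa_\lambda \leq^\kappa_B \cong^\kappa_{T'}$ for the non-classifiable $T'$. This is the technical heart, and it splits according to how $T'$ fails to be classifiable (unsuperstable; superstable with DOP; superstable with OTOP). In each case one recasts Shelah's non-structure construction in reducibility form: one builds a $\kappa^+$-Borel family $(\M_\eta)_{\eta \in \pre{\kappa}{2}}$ with $\M_\eta \cong \M_\xi$ iff $\eta \mathrel{E^\kappa_\lambda} \xi$. The hypotheses $\mathsf{V=L}$, $2^\lambda > 2^{\aleph_0}$ and $\lambda^{<\lambda}=\lambda$ enter precisely here: $\diamondsuit$-type principles on $S^\kappa_\lambda$ furnish stationarily many faithful ``coding points'', while $2^\lambda > 2^{\aleph_0}$ guarantees enough room among the $\leq 2^{\aleph_0}$-sized labels to encode the required information. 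Chaining the two bounds yields $\cong^\kappa_T \leq^\kappa_B E^\kappa_\lambda \leq^\kappa_B \cong^\kappa_{T'}$, giving the reduction.

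For strictness, suppose toward a contradiction that $\cong^\kappa_{T'} \leq^\kappa_B \cong^\kappa_T$; chaining with the upper bound gives $\cong^\kappa_{T'} \leq^\kappa_B E^\kappa_\lambda$. To contradict this I would exhibit a second relation $E_{S'}$, for a suitable stationary $S' \subseteq \kappa$, with $E^\kappa_\lambda <^\kappa_B E_{S'} \leq^\kappa_B \cong^\kappa_{T'}$ (the rightmost reduction being another lower-bound coding for $T'$, and the strict separation $E^\kappa_\lambda <^\kappa_B E_{S'}$ being exactly what $\mathsf{V=L}$ supplies via failures of stationary reflection on the two sets). This would force $E_{S'} \leq^\kappa_B \cong^\kappa_{T'} \leq^\kappa_B E^\kappa_\lambda$, contradicting the separation. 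I expect the main obstacle to be precisely this combination of ingredients: producing faithful $\kappa^+$-Borel reductions of the combinatorial relations into $\cong^\kappa_{T'}$ for each flavour of non-classifiability, and proving the strict $\leq^\kappa_B$-separations among the $E_S$, is where essentially all of Shelah's non-structure machinery and the $\mathsf{V=L}$ combinatorics are consumed. Finally, for the forcing version I would replace $\mathsf{V=L}$ by a $\kappa$-closed, $\kappa^+$-cc forcing adding the needed $\diamondsuit$-sequences and separations: $\kappa$-closure preserves $\Mod^\kappa_\L$ and all the $\leq \kappa$-sequences used in the codings, while the $\kappa^+$-cc preserves cardinals and stationary subsets of $\kappa$, so both the codings and the separations survive in the extension.
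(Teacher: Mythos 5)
First, a point of reference: the paper does not prove this statement. Theorem~\ref{thm:HKM17} is quoted verbatim from~\cite{HKM17} purely as a point of comparison with Proposition~\ref{prop:reducibility}, so there is no in-paper proof to compare against; the benchmark is the argument in~\cite{HKM17} itself. Measured against that source, your architecture for the positive half is essentially the published one: the proof does interpose exactly the relation you call \( E^\kappa_\lambda \) (there written \( E^2_{\lambda\text{-club}} \)), proving \( {\cong^\kappa_T} \leq^\kappa_B E^\kappa_\lambda \) for classifiable \( T \) and \( E^\kappa_\lambda \leq^\kappa_B {\cong^\kappa_{T'}} \) for non-classifiable \( T' \), the latter by cases on how classifiability fails and consuming the \( \diamondsuit \)-type principles available in \( \mathsf{L} \) together with \( 2^\lambda > 2^{\aleph_0} \), much as you describe. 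One correction of detail: the upper bound for classifiable \( T \) is not obtained by coding the skeleton of \( \leq 2^{\aleph_0} \)-sized submodels, but from Shelah's theorem that for classifiable theories isomorphism of \( \kappa \)-sized models coincides with \( \L_{\infty\kappa} \)-equivalence, i.e.\ with player \( \2 \) winning suitable (determined) Ehrenfeucht--Fra\"{\i}ss\'e games; the reduction to \( E^\kappa_\lambda \) is extracted from those games.

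The genuine gap is in your strictness argument. You propose to refute \( {\cong^\kappa_{T'}} \leq^\kappa_B {\cong^\kappa_T} \) by exhibiting a second relation \( E_{S'} \) with \( E_{S'} \leq^\kappa_B {\cong^\kappa_{T'}} \) and \( E_{S'} \not\leq^\kappa_B E^\kappa_\lambda \). For this you would need, for \emph{every} non-classifiable \( T' \), a faithful coding into \( \cong^\kappa_{T'} \) of some \( E_{S'} \) concentrating on a cofinality other than \( \lambda \); this is available for the unsuperstable case (via \( S^\kappa_\omega \)) but not, as far as is known, for the superstable DOP/OTOP cases, where the non-structure constructions yield reductions only from \( E_{S^\kappa_\lambda} \) itself. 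So the step ``exhibit \( E_{S'} \)'' is unjustified and would fail in general. The published argument avoids this entirely: since isomorphism of \( \kappa \)-sized models of a classifiable theory is equivalent to \( \2 \) winning a determined game, both \( \cong^\kappa_T \) and its complement are \( \kappa \)-analytic, and \( \kappa^+ \)-Borel reductions pull this property back; on the other hand, under \( \mathsf{V=L} \) the relation \( E^\kappa_\lambda \) is \( \kappa \)-analytic while its complement is not (it is \( \kappa \)-analytically complete in \( \mathsf{L} \)). Hence \( E^\kappa_\lambda \not\leq^\kappa_B {\cong^\kappa_T} \), and composing with \( E^\kappa_\lambda \leq^\kappa_B {\cong^\kappa_{T'}} \) gives strictness. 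This is where \( \mathsf{V=L} \) is really spent, and it is the ingredient missing from your sketch; your treatment of the forcing version is otherwise in the right spirit.
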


Comparing Proposition~\ref{prop:reducibility} with Theorem~\ref{thm:HKM17} one may notice the following:
\begin{itemizenew}
\item
The dividing line in Theorem~\ref{thm:HKM17} is classifiability, while the dividing line in Proposition~\ref{prop:reducibility} coincides with the one of Shelah's Main Gap Theorem~\ref{thm:She}.
\item
Theorem~\ref{thm:HKM17} is a consistency result which holds in certain specific models of \( \mathsf{ZFC} \), namely G\"odel's constructible universe \( \mathsf{L} \) or certain forcing extensions of \( \mathsf{V} \). It is apparently open whether one can get such a result in \( \mathsf{ZFC} \) alone (and possibly with less constraints on \( \kappa \)) --- see the Question at the end of~\cite{HKM17}. In contrast, Proposition~\ref{prop:reducibility} is proved in \( \mathsf{ZFC} \) alone.
\item
The conditions on \( \kappa \) in the two results are quite different. If e.g.\ we work in \( \mathsf{L} \), then the successors of inaccessible cardinals satisfy the hypotheses of Theorem~\ref{thm:HKM17} but not those of Proposition~\ref{prop:reducibility}; conversely, there are successors of singular cardinals (of any cofinality) which satisfy the hypotheses of Proposition~\ref{prop:reducibility} but not those of Theorem~\ref{thm:HKM17}.
\item
Proposition~\ref{prop:reducibility} is just an easy observation following mostly from cardinality considerations together with Theorem~\ref{thm:FHK}. Theorem~\ref{thm:HKM17} is instead much more informative and requires involved techniques, and it is arguably stronger in at least two different directions: it can be shown that the \( \kappa^+ \)-Borel reduction between \( \cong^\kappa_T \) and \( \cong^\kappa_{T'} \) can actually be taken to be continuous; moreover, the \( \leq^\kappa_B \)-gap between the two isomorphism relations can be shown to be very large and complicated (see~\cite[Theorem 7]{HKM17}).
\end{itemizenew}

The last item naturally raises the following question.

\begin{question}
Let \( \kappa \) be as in Proposition~\ref{prop:reducibility}. How large can be the \( \leq^\kappa_B \)-gap between \( \cong^\kappa_T \) and \( \cong^\kappa_{T'} \) when \( T \) is classifiable shallow and \( T' \) is not? In particular, what happens if \( T' \) is classifiable deep? (By Theorem~\ref{thm:HKM17} this is the unique relevant case that needs to be studied, if we work in \( \mathsf{L} \) and further assume that \( \kappa \) satisfies the hypotheses of that theorem.)
\end{question}

\subsection{Incomplete theories} \label{subsec:incompletetheories}

Expanding on a suggestion of M.\ Moreno, we notice that weaker forms of the Descriptive Main Gap Theorem~\ref{thm:main} apply to more general situations, including the case of countable theories \( T \) which are not necessarily complete (simply notice that \( \Mod^\kappa_T \) is trivially closed under elementary equivalence).  We denote by \( \mathrm{Th}(\M) \) the (complete) theory of a structure \( \M \).

\begin{theorem} \label{thm:incompleteDMG}
Let $\kappa^{<\kappa}=\kappa>2^{\aleph_0}$. Let \( \L \) be a countable first-order language, and \( \ElCl \subseteq \Mod^\kappa_\L \) be any class 
closed under elementary equivalence.
Then either \( B(\kappa,\ElCl) = \mathrm{rk}_B({\cong_\ElCl^\kappa}) \leq \omega_1 \), or else \( \cong^\kappa_\ElCl \) is not \( \kappa^+ \)-Borel at all.
\end{theorem}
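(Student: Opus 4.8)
The plan is to decompose $\ElCl$ along complete first-order theories and then reduce the whole question to a single computation of the $\L_{\infty\kappa}$-Scott height $S(\kappa,\ElCl)$, which Theorem~\ref{orig3} and Corollary~\ref{cor:orig3} translate into information about $B(\kappa,\ElCl)$. Since $\ElCl$ is closed under elementary equivalence it is in particular invariant, and setting $\Phi=\{\Th(\M)\mid \M\in\ElCl\}$ I would write $\ElCl=\bigcup_{T\in\Phi}\Mod^\kappa_T$ as a disjoint union. As $\L$ is countable there are at most $2^{\aleph_0}$ complete $\L$-theories, so $|\Phi|\le 2^{\aleph_0}<\kappa$. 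The dichotomy to establish is then: either every $T\in\Phi$ is classifiable shallow, in which case $B(\kappa,\ElCl)\le\omega_1$, or some $T_*\in\Phi$ is not, in which case $\cong^\kappa_\ElCl$ is not $\kappa^+$-Borel.

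The one point requiring care is the comparison between the Scott height of a model computed relative to $\ElCl$ and relative to its own piece. First I would observe that, since every first-order sentence has finite quantifier rank, for any $\alpha\ge\omega$ the relation $\M\equiv_\alpha\N$ forces $\M$ and $\N$ to have the same complete first-order theory, hence to lie in the same piece $\Mod^\kappa_{\Th(\M)}$. From this it follows, for $\M\in\Mod^\kappa_T$ with $T=\Th(\M)$, that $S(\kappa,T,\M)\le S(\kappa,\ElCl,\M)\le\max\{S(\kappa,T,\M),\omega\}$: the lower bound is immediate from $\Mod^\kappa_T\subseteq\ElCl$, while for the upper bound one checks that at any level $\alpha\ge\max\{S(\kappa,T,\M),\omega\}$ no model of a different theory can be $\equiv_\alpha$-equivalent to $\M$, so the distinguishing task relative to $\ElCl$ is no harder than relative to $\Mod^\kappa_T$. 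Taking suprema gives $\sup_{T\in\Phi}S(\kappa,T)\le S(\kappa,\ElCl)\le\max\{\omega,\sup_{T\in\Phi}S(\kappa,T)\}$.

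In the first case I would invoke Lascar's Theorem~\ref{depth} and Corollary~\ref{scottdepth} to get $S(\kappa,T)\le 2\cdot\mathrm{depth}(T)<\omega_1$ for each $T\in\Phi$; since the supremum of at most $2^{\aleph_0}$ ordinals all below $\omega_1$ is still $\le\omega_1$, the bound above yields $S(\kappa,\ElCl)\le\omega_1<\kappa^+$. Feeding this into Theorem~\ref{orig3}\ref{orig3-2} (applicable as $|\L|<\kappa$) gives ${\cong^\kappa_\ElCl}\in\boldsymbol{\Pi}^0_{2S(\kappa,\ElCl)}(\ElCl^2)$ and hence $B(\kappa,\ElCl)\le 2S(\kappa,\ElCl)\le 2\cdot\omega_1=\omega_1$. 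In the second case, the hypothesis $\kappa^{<\kappa}=\kappa>2^{\aleph_0}$ forces $\kappa$ to be regular, so Corollary~\ref{scottdepth} gives $S(\kappa,T_*)\in\{\kappa^+,\infty\}$; the lower bound $S(\kappa,\ElCl)\ge S(\kappa,T_*)$ then shows $S(\kappa,\ElCl)\not<\kappa^+$, and the contrapositive of Theorem~\ref{orig3}\ref{orig3-1} yields that $\cong^\kappa_\ElCl$ is not $\kappa^+$-Borel.

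The main obstacle I anticipate is precisely the relative-versus-absolute Scott-height comparison of the second paragraph: one must argue that the (possibly continuum-many) elementary-equivalence pieces do not interact above level $\omega$, so that a single uniform bound on the individual Scott heights controls $S(\kappa,\ElCl)$. Once this is in place, the theorem is a matter of combining the Scott-height calculus of Theorem~\ref{orig3} with the cardinality count $|\Phi|\le 2^{\aleph_0}$, which keeps the final bound at $\omega_1$.
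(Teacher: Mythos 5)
Your proof is correct, but it takes a genuinely different route from the paper's. Both arguments start from the same decomposition $\ElCl=\bigcup_{T\in\Phi}\Mod^\kappa_T$ into complete theories, but the paper then works entirely on the Borel side: in the bad case it observes that $\cong^\kappa_{T_*}$ is the restriction of $\cong^\kappa_\ElCl$ to the $\kappa^+$-Borel set $(\Mod^\kappa_{T_*})^2$ and invokes Theorem~\ref{thm:FHK}; in the good case it writes $\cong^\kappa_\ElCl$ as the union over $\Phi$ of the sets $(\Mod^\kappa_{T_\alpha})^2\cap B_\alpha$, using Remark~\ref{rmk:borelrankfirstordertheories} (each $\Mod^\kappa_{T_\alpha}$ has Borel rank $\le\omega$) together with the closure of $\boldsymbol{\Delta}^0_{\omega_1}$ under $<\kappa$-unions --- and this is where the count $|\Phi|\le 2^{\aleph_0}<\kappa$ is genuinely needed. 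You instead work on the Scott-height side: your comparison lemma $S(\kappa,T,\M)\le S(\kappa,\ElCl,\M)\le\max\{S(\kappa,T,\M),\omega\}$ (which is correct --- for $\alpha\ge\omega$ the relation $\equiv_\alpha$ refines elementary equivalence, so the pieces do not interact above level $\omega$) lets you bound $S(\kappa,\ElCl)$ by $\omega_1$ via Lascar's theorem and then apply Theorem~\ref{orig3} once to the whole of $\ElCl$. A pleasant feature of your route is that the cardinality of $\Phi$ is actually irrelevant (your closing remark notwithstanding): the supremum of any set of countable ordinals is $\le\omega_1$, so the hypothesis $2^{\aleph_0}<\kappa$ is used only to invoke Corollary~\ref{scottdepth}, not for any counting of unions. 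You also get the structural byproduct that $S(\kappa,\ElCl)$ agrees with $\sup_{T\in\Phi}S(\kappa,T)$ up to bumping to $\omega$. What the paper's route buys in exchange is independence from the EF-game machinery of Section~\ref{sec:ScottVsBorel} at this point and the marginally finer conclusion $\cong^\kappa_\ElCl\in\boldsymbol{\Delta}^0_{\omega_1}(\ElCl^2)$, whereas you land in $\boldsymbol{\Pi}^0_{2S(\kappa,\ElCl)}$ with $2S(\kappa,\ElCl)\le\omega_1$; both give $\mathrm{rk}_B(\cong^\kappa_\ElCl)\le\omega_1$ as required. (Your ordinal arithmetic $2\cdot\omega_1=\omega_1$ is consistent with the paper's left-multiplication convention for $2\beta$, as used in Claim~\ref{rankU}.)
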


\begin{proof}
Let \( (T_\alpha)_{\alpha < \nu } \) (for some \( \nu \leq 2^{\aleph_0} \)) be an enumeration without  repetitions of the complete theories of the form \( \mathrm{Th}(\M) \) for \( \M \in \ElCl \), so that \( \Mod^\kappa_{T_\alpha} \subseteq \ElCl \) for every \( \alpha < \nu \) by the hypothesis on \( \ElCl \). Notice that, by the choice of the \( T_\alpha \)'s, for every \( \M \in \ElCl \) there is a unique \( \alpha < 2^{\aleph_0} \) such that \( \M \in \Mod^\kappa_{T_\alpha} \), namely the \( \alpha < \nu \) such that \( T_\alpha = \mathrm{Th}(\M) \). We distinguish two cases.

If for some \( \alpha < 2^{\aleph_0} \) the theory \( T_\alpha \) is not classifiable shallow, then \( \cong^\kappa_{T_\alpha} \) is not \( \kappa^+ \)-Borel by Theorem~\ref{thm:FHK}, and hence the same applies to the whole \( \cong^\kappa_\ElCl \).

Assume now that all \( T_\alpha \)'s are classifiable shallow, so that \( \mathrm{rk}_B({\cong^\kappa_{T_\alpha}}) < \omega_1 \) by Theorem~\ref{thm:DMGT}, and let \( B_\alpha \subseteq (\Mod^\kappa_\L)^2 \) be corresponding \( \kappa^+ \)-Borel sets of rank \( < \omega_1 \) such that \( B_\alpha \cap (\Mod^\kappa_{T_\alpha})^2 = {\cong^\kappa_{T_\alpha}} \). Then for all \( \M,\N \in \ElCl \)
\[ 
\M \cong \N \IFF \bigvee_{\alpha < \nu} \left( {\M \in \Mod^\kappa_{T_\alpha}} \wedge {\N \in \Mod^\kappa_{T_\alpha}} \wedge {(\M,\N) \in B_\alpha} \right).
 \] 
Since by Remark~\ref{rmk:borelrankfirstordertheories} all the \( \Mod^\kappa_{T_\alpha} \) are \( \kappa^+ \)-Borel subsets of  \( \Mod^\kappa_\L \)  with rank \( \leq \omega \), the formula in parentheses defines \( \kappa^+ \)-Borel sets with rank%
\footnote{Notice however that when \( \alpha \) varies in \( 2^{\aleph_0} \) such ranks may be arbitrarily high below \( \omega_1 \).} 
\( < \omega_1 \), hence they are all in \( \boldsymbol{\Delta}^0_{\omega_1} \). Since such class is closed under \( < \kappa \)-unions by \( \kappa^{< \kappa} = \kappa \) and we assumed \( 2^{\aleph_0} < \kappa \), it follows that \( {\cong^\kappa_\ElCl} \in \boldsymbol{\Delta}^0_{\omega_1}(\ElCl^2) \).
\end{proof}

Of course a similar argument can be used to extend Shelah's Main Gap Theorem~\ref{thm:She} to classes \( \ElCl \) as in Theorem~\ref{thm:incompleteDMG}. In particular, if \( \kappa \geq \aleph_1 \) is the \( \gamma \)-th cardinal and \( T \) is a countable (not necessarily complete) first-order theory, then either \( I(\kappa, T) \leq \beth_{\omega_1} \left(|\gamma|\right) \), or else \( I(\kappa,T) = 2^\kappa \).

\end{document}